\documentclass[11pt]{article}

\usepackage[T1]{fontenc}
\usepackage{lmodern}
\usepackage[margin=1in]{geometry}
\usepackage{amsmath,amssymb,amsthm,mathtools,mathrsfs}
\usepackage{enumitem}
\usepackage[hidelinks]{hyperref}
\usepackage[nameinlink,noabbrev]{cleveref}
\usepackage{microtype}

\newtheorem{theorem}{Theorem}[section]
\newtheorem{proposition}{Proposition}[section]
\newtheorem{lemma}{Lemma}[section]
\newtheorem{corollary}{Corollary}[section]
\theoremstyle{remark}
\newtheorem{remark}{Remark}[section]
\newtheorem{defn}{Definition}[section]
\newtheorem{assumption}{Assumption}[section]

\newcommand{\Ch}{\mathrm{Ch}}
\newcommand{\Gammak}{\Gamma_k}
\newcommand{\ddc}{dd^c}

\newcommand{\tr}{\operatorname{tr}}
\newcommand{\Lip}{\operatorname{Lip}}
\newcommand{\osc}{\operatorname{osc}}
\newcommand{\Vol}{\operatorname{Vol}}
\newcommand{\lintr}{\mathscr F}

\newcommand{\one}{\mathbf 1}
\newcommand{\R}{\mathbb R}
\newcommand{\C}{\mathbb C}

\newcommand{\ddbar}{\sqrt{-1}\,\partial\bar\partial}

\title{Regularization and H\"older continuity for complex
Hessian equations on Hermitian manifolds}
\author{Yulun Xu}
\date{\today}

\begin{document}

\maketitle

\begin{abstract}
Let $(M,\omega)$ be a compact Hermitian manifold, and let $\Gamma$ be a symmetric
convex cone. We develop a quantitative regularization method for
$\Gamma$-admissible functions. As an application, we prove H\"older continuity
for every bounded pluripotential solution of complex $m$-Hessian equations whose right-hand sides belong to
$L^p$, for $p>\frac{n}{m}$. These results extend to a more general class of
complex Hessian equations satisfying the structural condition used by
Guo--Phong--Tong \cite{GuoPhongTong2023}.
\end{abstract}

\section{Introduction}
The main goal of this paper is to address the long-standing problem of
H\"older regularity for solutions of complex Hessian equations on Hermitian
manifolds with right-hand sides in $L^p$. For such data, H\"older continuity
is in general the natural regularity one expects; see
\cite{DinewPlisZhang2019}.

The H\"older regularity theory of complex Hessian equations with right-hand sides in $L^p$ has two
logically different components.  One needs, first, a quantitative
regularization that respects the relevant positivity cone and, second, a
stability estimate that compares the regularized function with the solution
of the equation.  For the complex Monge--Amp\`ere operator, the regularization
theory of plurisubharmonic functions provides the first component.  On a
curved background, Demailly's regularization initially produces curvature
errors, which can be attenuated by a Kiselman--Legendre transform; see
\cite{Demailly1992,Demailly1994}. B{\l}ocki and Ko{\l}odziej \cite{BK}
defined local approximations for bounded plurisubharmonic functions and showed
that they can be patched together, thereby simplifying Demailly's argument in
the bounded setting. The H\"older exponent was refined in
\cite{DemaillyEtAl2014}. This regularity theory was extended to compact
Hermitian manifolds in \cite{KolodziejNguyen2019,LuPhungTo2021}.

However, for complex $m$-Hessian equations on compact K\"ahler manifolds,
H\"older regularity was largely open, according to Ko{\l}odziej--Nguyen
\cite{KolodziejNguyen2025}. The pluripotential theory of $m$-subharmonic functions,
together with basic $L^p$ and stability estimates, was developed in
\cite{Blocki2005,DinewKolodziej2014}. A central obstacle to H\"older estimates
was the absence of a quantitative global regularization on a general compact
Hermitian manifold. Cheng and the author overcame this obstruction on compact
K\"ahler manifolds with nonnegative holomorphic bisectional curvature by using
a geometric sup-convolution \cite{ChengXu2024}. In the present paper, we
construct a quantitative regularization framework on arbitrary compact
Hermitian manifolds for a class of fully nonlinear spectral operators that
includes the complex $m$-Hessian operators.

Guo--Phong--Tong \cite{GuoPhongTong2023} introduced an auxiliary
Monge--Amp\`ere method for uniform $L^\infty$ estimates on compact K\"ahler
manifolds. Guo--Phong \cite{GuoPhong2024} subsequently developed this circle
of ideas for fully nonlinear equations, including the non-K\"ahler setting,
and emphasized its role as the beginning of a De Giorgi--Nash--Moser theory
for fully nonlinear equations. The
auxiliary Monge--Amp\`ere method is an essential input in the stability
argument below. It also underlies modulus-of-continuity estimates for complex
Monge--Amp\`ere equations on K\"ahler manifolds
\cite{GuoPhongTongWang2021} and on Hermitian manifolds \cite{Liu2025}. From
this perspective, the H\"older estimates proved here constitute a natural next 
step for fully nonlinear complex equations with $L^p$ data.

Let \((M,\omega)\) be a connected compact Hermitian manifold of complex
dimension \(n\), where
\[
 \omega=\sqrt{-1}\,g_{j\bar k}\,dz^j\wedge d\bar z^k.
\]
Let $\chi$ be a smooth real $(1,1)$-form on $M$.  For a real-valued
function $u$, set
\[
 A_u=g^{-1}(\chi+\ddc u),
 \qquad
 \lambda[\chi+\ddc u]:=\lambda(A_u).
\]
Thus, in local coordinates,
$(A_u)^i{}_j=g^{i\bar k}(\chi_{j\bar k}+u_{j\bar k})$.

Let $f\in C^\infty(\Gamma)\cap C(\overline\Gamma)$ be symmetric.  We use
the following assumptions on $(f,\Gamma)$.

\begin{assumption}\label{ass1}
\begin{enumerate}[label=\textup{(\arabic*)}]
\item \(\Gamma\subset\R^n\) is an open symmetric convex cone satisfying
\[
 \Gamma_n:=\{\lambda_i>0\}
 \subset\Gamma\subset
 \Gamma_1:=\left\{\sum_{i=1}^n\lambda_i>0\right\}.
\]
\item $f_i:=\partial f/\partial\lambda_i>0$ for $1\leq i\leq n$;
      $f$ is concave and positive on $\Gamma$, and
      $f=0$ on $\partial\Gamma$.
\end{enumerate}
\end{assumption}

\begin{assumption}\label{ass2}
$f$ is homogeneous of degree one.
\end{assumption}

\begin{assumption}\label{ass3}
For the spectral function $F(A)=f(\lambda(A))$, there is a
constant $\gamma_0>0$ such that, in every unitary frame,
\[
 \det\!\left(\frac{\partial F}
                   {\partial A_{i\bar j}}(A)\right)
 \geq\gamma_0
 \qquad\text{whenever }\lambda(A)\in\Gamma.
\]
\end{assumption}

\begin{remark}
Assumption~\ref{ass3} is the same as the condition used in
\cite{GuoPhongTong2023}. Many operators satisfy this assumption, including the
following:
\begin{enumerate}
\item The complex $k$-Hessian operator:
$f(\lambda)=\sigma_k^{1/k}(\lambda)$, $1\leq k\leq n$, with
$\Gamma_k:=\{\lambda:\sigma_i(\lambda)>0,\ 1\leq i\leq k\}$.
\item The $\sigma_k$-operator for $(n-1)$-plurisubharmonic functions:
$f(\lambda)=\sigma_k^{1/k}(\widetilde{\lambda})$, where
$\widetilde{\lambda}_i=\frac{1}{n-1}\sum_{j\neq i}\lambda_j$.
\item The $p$-fold sum operator:
$f(\lambda)=\bigl(\prod_{|J|=p}\lambda_J\bigr)^{1/N}$, where
$\lambda_J=\lambda_{j_1}+\lambda_{j_2}+\cdots+\lambda_{j_p}$ and
$N=\binom{n}{p}$.
\end{enumerate}
The second example with $k=n$ was studied by Tosatti--Weinkove \cite{TW1},
whereas the third was considered by Harvey--Lawson \cite{HL2} in connection
with $p$-geometry and $p$-potential theory. Moreover, a general result of
Gurvits \cite{Gur} (see also \cite{HL}) shows that all
G{\aa}rding--Dirichlet operators satisfy this assumption.

Complex quotient equations do not satisfy Assumption~\ref{ass3}. By
Cheng and the author \cite{ChengXu2026}, stability estimates for complex quotient
equations require greater regularity of the right-hand side. Consequently,
the method developed in the present paper does not directly yield H\"older
estimates for complex Hessian quotient equations with merely $L^p$
right-hand sides.
\end{remark}

\subsection{H\"older estimate}

Let \(u\in C(M)\) be a bounded viscosity solution of
\begin{equation}
F(A_u)=f(\lambda[\chi+\ddc u])=e^G>0,
 \qquad
 \sup_M u=-1, \qquad \lambda[\chi+dd^c u]\in \Gamma,
 \label{eq:main}
\end{equation}
where $G$ is continuous.
\begin{theorem}
\label{thm:main}
Let $(M,\omega)$ be a connected compact Hermitian manifold.  Suppose that
$(f,\Gamma)$ satisfies Assumptions~\ref{ass1} and~\ref{ass3}, and that $\chi$ is a smooth $(1,1)$-form with
$\lambda(g^{-1}\chi)\in\Gamma$ pointwise on $M$. Let $u$ be a bounded
viscosity solution of \eqref{eq:main}. Assume that
$e^{nG}\in L^{p_0}(M,\omega^n)$ for some $p_0>1$, and set
$q_0=p_0/(p_0-1)$. Then, for every
\begin{equation}
 0<\mu< 1+ \frac{2}{1+nq_0} - \sqrt{1+ \frac{4}{(1+n q_0)^2}}
 \label{eq:target-exponent}
\end{equation}
one has
\[
 \|u\|_{C^{\mu}(M)}\leq C.
\]
Here $C$ depends only on $M,\omega,\chi$, $n,p_0,\mu$, the
structural data of $(f,\Gamma)$, and
$\|e^{nG}\|_{L^{p_0}(M,\omega^n)}$.
\end{theorem}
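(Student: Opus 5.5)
We follow the classical two-step scheme (regularization plus stability) suggested in the introduction, now for $\Gamma$-admissible functions on a Hermitian manifold.

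\textbf{Step 1: quantitative regularization.} For small $\delta>0$ we build a family $u_\delta$ of $\Gamma$-admissible functions approximating $u$. We take a Kiselman--Legendre-type regularization in the spirit of Demailly and B{\l}ocki--Ko{\l}odziej, realized through local sup-convolutions in holomorphic charts, patched by a gluing argument and then corrected by $-C\delta$ (or a small multiple of a fixed potential) so that the curvature errors are absorbed. Since $\lambda(g^{-1}\chi)\in\Gamma$ pointwise and $\Gamma$ is an open convex cone, there is $\epsilon_0>0$ with $\lambda(g^{-1}((1-\theta)\chi))+\theta\epsilon_0\,\mathbf 1\in\Gamma$. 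Concavity of $\Gamma$ then allows us to rescale:
\[
\tilde u_\delta=(1-\theta(\delta))u_\delta-C\delta
\]
is $\chi$-admissible. The target estimate is
\[
u\le \tilde u_\delta+C\delta, \qquad \int_M(\tilde u_\delta-u)\,\omega^n\le C\delta^{\alpha}
\]
for an explicit exponent $\alpha$.

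To get the $L^1$ bound we first derive a Laplacian-mass estimate $\int_M \Delta_\omega u\,\omega^n\le C$, using $\Gamma\subset\Gamma_1$ and integration by parts with torsion terms. This bounds $\int_M(\sup_{B_\delta}u-u)\le C\delta^2$ up to lower-order errors, with a loss of exponent coming from the Hermitian torsion terms.

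\textbf{Step 2: stability via the auxiliary Monge--Amp\`ere method.} We compare $u$ with $\tilde u_\delta$ by an $L^1$--$L^\infty$ stability estimate of Guo--Phong--Tong type:
\[
\sup_M(\tilde u_\delta-u)\le C\,\|(\tilde u_\delta-u)_+\|_{L^1}^{\beta}, \qquad \beta<\frac{1}{1+nq_0}.
\]
To prove it, we solve an auxiliary Monge--Amp\`ere equation on the Hermitian manifold with right-hand side proportional to $\tau_k(-\psi_s)e^{nG}$ on the superlevel set $\{\tilde u_\delta-u>s\}$, using Tosatti--Weinkove solvability and Ko{\l}odziej--Nguyen bounds in the Hermitian case. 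Assumption~\ref{ass3} gives $F^{i\bar j}$-ellipticity with $\det F^{i\bar j}\ge\gamma_0$, so
\[
F^{i\bar j}(\cdot)_{i\bar j}\ge n\gamma_0^{1/n}(\det)^{1/n}.
\]
A maximum-principle comparison, which in the viscosity setting is applied to the solution $u$ against smooth test functions, then yields exponential integrability and a De Giorgi iteration on $\phi(s)=\int_{\{\tilde u_\delta-u>s\}}e^{nG}$. The gain $1/(1+nq_0)$ comes from H\"older's inequality with $q_0$.

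\textbf{Step 3: exponent bookkeeping.} Combining the two steps gives
\[
\sup_M(u_\delta-u)\le C\delta^{\mu},
\]
and from this, via the standard characterization in Demailly / Kolodziej--Nguyen, $u$ is $\mu$-H\"older. The quadratic threshold in \eqref{eq:target-exponent} should come from optimizing a recursive bootstrap. Partial H\"older continuity of exponent $\mu$ improves the $L^1$ error, since the Kiselman correction costs $\delta^{\mu}$ times the curvature term, leading to a fixed-point relation of the form $\mu=\beta(2-\mu)/(1+\ldots)$. Its root is $1+\frac{2}{1+nq_0}-\sqrt{1+4/(1+nq_0)^2}$, which matches the form of the bound. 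We first establish some positive exponent from the stability estimate, then iterate.

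\textbf{Main obstacle.} We expect Step 1 to be hardest: constructing the global regularization preserving the non-linear cone $\Gamma$ on a Hermitian manifold, with losses that are only polynomial in $\delta$. Local convolutions are not $\Gamma$-admissible because the metric varies and the cone is not invariant under the chart Jacobians. We would first try sup-convolution along exponential maps of the Chern connection, controlling the error of the second derivative by $C\delta\,|\nabla u|$-type terms absorbed via $\Gamma_n\subset\Gamma$.
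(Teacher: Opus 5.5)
Your Step~1 is where the proof breaks down, and it is exactly the part the paper identifies as the central difficulty. You flag it as the main obstacle but do not resolve it, and the tools you propose are the ones that fail for $\Gamma\neq\Gamma_n$ on a general Hermitian manifold.

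\emph{Chart translations.} The condition $\lambda(g^{-1}(\chi+\ddc u))\in\Gamma$ depends on $g$ pointwise. A translate $u(\cdot+y)$, $|y|\le\delta$, whether used for convolution or for $\sup_{B_\delta}$, satisfies the cone condition at $x$ with respect to $g(x+y)$, not $g(x)$. Passing between the two is a congruence by $I+O(\delta)$. Only $\Gamma_n$ is invariant under such congruences. For other cones, already $\Gamma_1$, the condition can fail by $O(\delta\,|\ddc u|)$, or by $O(\delta|D^2P|)$ for upper test functions $P$. This is not bounded for a merely bounded $u$. The rescaling $(1-\theta)u_\delta-C\delta$ only absorbs bounded errors of the form $-C\theta\omega$, so it cannot repair this.

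\emph{Kiselman--Legendre.} This step rests on Kiselman's minimum principle, which is a plurisubharmonic phenomenon and already fails for $\Gamma_1$. On $\C^n\times\C$, the function $(\operatorname{Re}w)^2-c|z|^2$ with $0<c<1/(2n)$ is subharmonic and independent of $\operatorname{Im}w$, but its infimum over $w$, namely $-c|z|^2$, is not.

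\emph{Sup-convolution along exponential maps.} This produces curvature errors of order $\osc_M u$ rather than $o(1)$. This is the geometric sup-convolution of \cite{ChengXu2024}, which was only made to work on K\"ahler manifolds with nonnegative holomorphic bisectional curvature. Also, sup-convolutions are not $C^2$. The viscosity stability argument needs a $C^2$, exactly admissible comparison $v$ so that $(1-\delta)v-s-\varepsilon^{-1}R^{\alpha}$ is a genuine lower test function for $u$. So your Step~2 would need an extra argument even if Step~1 worked. Step~2 is otherwise essentially the paper's argument.

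The missing idea is to decouple smoothing from admissibility. The paper smooths with the Chern heat flow, $h_t=H_tu+C_\chi t$. This uses only $\Gamma\subset\Gamma_1$, i.e.\ $C_\chi+\Delta^{\Ch}_\omega u\ge0$. It gives $u\le h_t$, $\|h_t-u\|_{L^1}\le Ct$ via the Gauduchon density, and $\|\nabla h_t\|_\infty^2+\|\nabla^2 h_t\|_\infty\le Ct^{-1}$.

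Admissibility is then restored by the envelope $P_{\Gamma,\chi}(h_t)$, which is squeezed between $u$ and $h_t$. The envelope is approximated by smooth solutions of $f_\Gamma(\lambda[\chi+\ddc u_\beta])=e^{\beta(u_\beta-h_t)}$. Here $f_\Gamma$ is the degree-one homogeneous Koszul--Vinberg operator, needed because $f$ is not assumed homogeneous. The key input is the $\beta$-uniform bound $\|\nabla u_{\beta,h}\|_\infty^2\le C(1+\|\nabla^2 h\|_\infty+\|\nabla h\|_\infty^2)$. It comes from a Hou--Ma--Wu/Sz\'ekelyhidi second-order estimate plus a blow-up and Liouville argument, and it yields $\Lip(u_t)\le Ct^{-1/2}$.

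Your Step~3 is not derived either. In the paper the stability exponent $\gamma<1/(1+nq_0)$ stays fixed. What improves under $u\in C^\alpha$ is $\|\nabla^2H_tu\|_\infty\le Ct^{-(2-\alpha)/2}$, hence $\Lip(u_t)\le Ct^{-(2-\alpha)/4}$ through the explicit $h$-dependence above. Interpolating against $\|u_t-u\|_\infty\le Ct^\gamma$ gives the map $T_\gamma(\alpha)=4\gamma/(4\gamma+2-\alpha)$, whose smaller fixed point is $1+2\gamma-\sqrt{1+4\gamma^2}$. Your relation $\mu=\beta(2-\mu)/(1+\cdots)$ is reverse-engineered from the statement. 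If your Step~1 delivered what you describe, a $\sup_{B_\delta}$-type regularization with $L^1$ error $O(\delta^2)$, Step~2 would give $\sup_{B_\delta}u-u\le C\delta^{2\gamma}$. That is exponent $2\gamma$ directly, with no bootstrap, so nothing in your scheme produces this particular threshold.
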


\begin{remark}
For the complex Monge--Amp\`ere equation on Hermitian manifolds, the best
H\"older exponent in literature is $\frac{2}{1+q_0 n}$; see \cite{LuPhungTo2021}.
\end{remark}

For pluripotential solutions of complex $m$-Hessian equations with right-hand sides
in $L^{p_0}$, we establish H\"older regularity without assuming a uniform
positive lower bound for the right-hand side.
\begin{corollary}\label{cor:main}
Let $(M,\omega)$ be a connected compact Hermitian manifold, and let
$k\in\{1,\ldots,n\}$. Suppose that $\chi$ is a smooth $(1,1)$-form with
$\lambda(g^{-1}\chi)\in\Gamma_k$ pointwise on $M$. Let $u$ be a bounded
pluripotential solution of
\begin{equation}\label{eq:main-cor}
(\chi+dd^c u)^k\wedge\omega^{n-k}=g\,\omega^n,
\end{equation}
where $g\in L^{p_0}$ is nonnegative and $p_0>\frac{n}{k}$. Then
$u\in C^{\mu}(M)$ for every
\[
0<\mu< 1 + \frac{2(kp_0-n)}{kp_0 (n+1)-n}- \sqrt{1+ \frac{4(k p_0-n)^2}{(kp_0 (n+1)-n)^2}}.
\]
\end{corollary}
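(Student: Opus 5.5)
The plan is to derive Corollary~\ref{cor:main} from Theorem~\ref{thm:main} applied to the operator $f=\sigma_k^{1/k}$ on $\Gamma_k$, which satisfies Assumptions~\ref{ass1} and~\ref{ass3} by the preceding remark. Two issues must be handled: the right-hand side $g$ may vanish, so $e^G$ is not defined, and $u$ is only a pluripotential solution rather than a viscosity one.

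\textbf{Step 1: the exponents.} Write $(\chi+dd^cu)^k\wedge\omega^{n-k}=c_{n,k}\sigma_k(\lambda[\chi+dd^cu])\,\omega^n$. Then the equation reads $f(\lambda)=(g/c_{n,k})^{1/k}=:e^{G}$, so $e^{nG}\simeq g^{n/k}$. The condition $g\in L^{p_0}$ with $p_0>n/k$ gives $e^{nG}\in L^{p_1}$ with $p_1=kp_0/n>1$. Hence $q_1=p_1/(p_1-1)=kp_0/(kp_0-n)$, and
\[
 \frac{2}{1+nq_1}=\frac{2(kp_0-n)}{kp_0-n+nkp_0}=\frac{2(kp_0-n)}{kp_0(n+1)-n}.
\]
This matches the stated range, so it suffices to apply Theorem~\ref{thm:main} with constants depending only on $\|g\|_{L^{p_0}}$.

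\textbf{Step 2: approximation.} Set $g_j=\max(g,1/j)$, or smooth positive approximants converging to $g$ in $L^{p_0}$. On a Hermitian manifold the $k$-Hessian equation needs a constant: by the known solvability results (Székelyhidi, Zhang, and Kołodziej--Nguyen for the bounded Hermitian theory), we would solve
\[
 (\chi+dd^cu_j)^k\wedge\omega^{n-k}=c_jg_j\,\omega^n,\qquad \sup_M u_j=-1,
\]
with continuous, indeed smooth for smooth data, $\Gamma_k$-admissible $u_j$ and constants $c_j>0$ bounded above and below uniformly. Smooth solutions are viscosity solutions, and $e^{nG_j}=(c_jg_j)^{n/k}$ is uniformly bounded in $L^{p_1}$. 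Theorem~\ref{thm:main} then gives $\|u_j\|_{C^\mu}\le C$ uniformly in $j$.

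\textbf{Step 3: passing to the limit and identifying $u$.} By Arzelà--Ascoli, a subsequence converges uniformly to some $v\in C^\mu$, and $c_j\to c$. Continuity of the Hessian measure under uniform convergence gives $(\chi+dd^cv)^k\wedge\omega^{n-k}=cg\,\omega^n$. The main obstacle is to show that $u=v$ up to normalization and that $c=1$. For this we would invoke the uniqueness and stability results for bounded pluripotential solutions of Hessian equations on Hermitian manifolds, in the form of Kołodziej--Nguyen's $L^\infty$ stability $\|u-v\|_\infty\lesssim\|g-g'\|^{\gamma}_{L^1}$ together with comparison for the constant. If the paper's stability estimate, built on the auxiliary Monge--Ampère method, is available in the pluripotential class, we would instead apply it directly to $u$ and $u_j$ to get $u_j\to u$ uniformly. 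Either route yields $u=v\in C^\mu(M)$, with the bound depending only on the stated data.
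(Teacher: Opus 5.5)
Your exponent computation in Step 1 is correct, and Step 2 (uniform H\"older bounds for smooth approximating solutions via Theorem~\ref{thm:main}) matches what the paper does. There is a genuine gap in Step 3, where you identify the limit with $u$.

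\textbf{Where Step 3 breaks.} In the limit you obtain a bounded $v$ with $H_{\chi,k}(v)=c\,g\,\omega^n$.

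The constant is not the problem. If $c<1$, Lemma~\ref{lem:strict-ratio-domination} with $a=v$, $b=u+C$ gives $v\geq u+C$ for every $C$, which is absurd; the case $c>1$ is symmetric.

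The problem is what remains once $c=1$. You then need two bounded pluripotential solutions of $H_{\chi,k}(w)=g\,\omega^n$, with the same nonnegative $L^{p_0}$ density, to differ by a constant. This uniqueness is not known on Hermitian manifolds. The paper says so explicitly, and gives it as the reason it cannot approximate through the original equation.

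The $L^\infty$-stability estimate $\|u-v\|_\infty\lesssim\|g-g'\|_{L^1}^\gamma$ you want to cite for bounded pluripotential solutions would imply that uniqueness (take $g=g'$). So it is not available in this generality either.

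Your fallback, Proposition~\ref{prop:stability}, does not close the gap. It is one-sided: it controls only $\sup_M(v_t-u)$ for $C^2$, $\Gamma$-admissible comparison functions $v_t$. It also requires $u$ to be a viscosity solution with continuous $e^{G}>0$. Here $u$ is only a pluripotential solution and $g$ may vanish. The reverse inequality would require $u$ itself to be a $C^2$ admissible comparison function.

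\textbf{The missing idea.} Change the equation so that uniqueness becomes provable with the available pluripotential tools.

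The paper sets $h=e^{-u}g$. This $h$ lies in $L^{p_0}$ since $u$ is bounded, and $h\not\equiv0$ by Lemma~\ref{lem:strict-ratio-domination}. Then $u$ solves the twisted equation $H_{\chi,k}(w)=e^{w}h\,\omega^n$.

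For this equation, uniqueness among bounded $(\chi,k)$-subharmonic functions holds (Proposition~\ref{prop:exponential-uniqueness}). On $\{u<v-\epsilon\}$ the factor $e^{w}$ gives $H_{\chi,k}(u)\leq e^{-\epsilon}H_{\chi,k}(v-\epsilon)$. This strict ratio is exactly what Lemma~\ref{lem:strict-ratio-domination} needs.

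The paper then proceeds as follows:
\begin{enumerate}
\item Approximate $h$ (not $g$) by smooth positive $h_l$ with a uniform $L^{p_0}$ bound and $\int_M h_l^{1/k}\,\omega^n\geq a>0$.
\item Solve $H_{\chi,k}(u_l)=e^{u_l}h_l\,\omega^n$; no normalizing constant is needed.
\item Bound $\sup_M u_l$ from above by the Gauduchon--Maclaurin--Jensen argument of Proposition~\ref{thm:noncollapse-c0}.
\item Bound $\sup_M u_l$ from below by contradiction: a limit with vanishing Hessian measure is impossible.
\item Apply Theorem~\ref{thm:main} to get uniform $C^\alpha$ bounds.
\item Identify the limit with $u$ by uniqueness for the twisted equation.
\end{enumerate}
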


\subsection{Envelope}

Define the viscosity envelope:
\[
 P_{\Gamma,\chi}(h)
 =\left(\sup\left\{v\in\operatorname{USC}(M):
       v\leq h,\quad
       \lambda[\chi+\ddc v]\in\overline\Gamma
       \text{ in the viscosity sense}\right\}\right)^*.
\]
Here $(\cdot)^*$ denotes upper semicontinuous regularization.
The strict admissibility of $\chi$ makes this family nonempty.
General qualitative envelope theory fits
the subequation framework of Harvey--Lawson \cite{HarveyLawson2011}; for the
Monge--Amp\`ere and complex Hessian cones, related rooftop and obstacle
envelopes have been studied in
\cite{Berman2019,GuedjLuZeriahi2019,DarvasRubinstein2016,ChuMcCleerey2021,AhagCzyzLuRashkovskii2024}.

We use the following penalized equation to approximate the envelope:
\begin{equation}\label{eq:intro-penalized}
 f(\lambda[\chi+\ddc u_{\beta,h}])
 =e^{\beta(u_{\beta,h}-h)},
 \qquad \lambda[\chi+\ddc u_{\beta,h}]\in\Gamma,
\end{equation}

\begin{theorem}\label{thm:penalized-c2}
Suppose that $(f,\Gamma)$ satisfies Assumptions~\ref{ass1}
and~\ref{ass2}, and that $\lambda(g^{-1}\chi)\in\Gamma$ on $M$.
For $h\in C^2(M)$, there are $\beta_0>0$ and constants $C_1,C_2$
such that every smooth admissible solution of (\ref{eq:intro-penalized})
with $\beta\geq\beta_0$ satisfies
\begin{equation}\label{eq:penalized-c11}
\begin{split}
\frac{-C_1}{\beta} + \min_M h \le \min_M u_{\beta,h} & \le \max_M u_{\beta,h} \le \max_M h + \frac{C_1}{\beta}\\
\|\partial\bar\partial u_{\beta,h}\|_{\infty}
 +\|\nabla u_{\beta,h}\|_{\infty}^2
 &\le C_2\bigl(1+\|\nabla^2h\|_{\infty}
                    +\|\nabla h\|_{\infty}^2\bigr).
\end{split}
\end{equation}
The constant $C_1$ depends on the fixed data $(f,\omega,\chi)$, while
$C_2$ may additionally depend on $\osc_M h$, but neither depends on
$\beta$ or on the derivative norms of $h$.
\end{theorem}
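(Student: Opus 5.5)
The proof has three parts: a $C^0$ bound from the maximum principle, a gradient bound from the second-derivative bound via interpolation, and a second-derivative bound by a maximum-principle argument on a test function of Hou--Ma--Wu / Sz\'ekelyhidi type. The point throughout is that the penalization term $e^{\beta(u-h)}$ is never differentiated in a way that produces a factor of $\beta$ with the wrong sign.

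\textbf{$C^0$ bound.} At a maximum point $x_0$ of $u=u_{\beta,h}$ we have $\ddc u\le 0$. Hence $\lambda[\chi+\ddc u]\le\lambda(g^{-1}\chi)$ in the sense of eigenvalue monotonicity, and by $f_i>0$,
\[
e^{\beta(u-h)(x_0)}\le f(\lambda(g^{-1}\chi))(x_0)\le C .
\]
This gives $\max_M(u-h)\le \log C/\beta$, and therefore $\max u\le \max h+C_1/\beta$. At a minimum point, $\ddc u\ge 0$. Since $\lambda(g^{-1}\chi)\in\Gamma$ and $M$ is compact, $f(\lambda(g^{-1}\chi))\ge c_0>0$. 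This gives $e^{\beta(u-h)}\ge c_0$ there, so $\min(u-h)\ge \log c_0/\beta$. These constants depend only on $(f,\omega,\chi)$.

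\textbf{Second-derivative bound.} Set $w=u-h$. The $C^0$ step gives $|w|\le C/\beta$ and hence $|\beta w|\le C$. Differentiating the equation $F(A_u)=e^{\beta w}$ twice, with $F^{i\bar j}=\partial F/\partial A_{i\bar j}$ and linearized operator $L=F^{i\bar j}\nabla_i\nabla_{\bar j}$, the right-hand side contributes
\[
e^{\beta w}\bigl(\beta\nabla_1\nabla_{\bar1}w+\beta^2|\nabla_1 w|^2\bigr).
\]
The $\beta^2|\nabla w|^2$ term is nonnegative, so it has the favorable sign when we bound $\lambda_1$ from above. The $\beta\nabla\bar\nabla w$ term is handled through the equation. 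Concavity of $f$ and homogeneity give $\sum_i f_i\lambda_i=f$, and at a maximum of $\lambda_1$ we have
\[
\beta\,\nabla_1\nabla_{\bar1}w=\beta(u_{1\bar1}-h_{1\bar1})\ge\beta\lambda_1-\beta|\nabla^2h|-C\beta,
\]
which is again a good term of size $\beta\lambda_1$. We apply the maximum principle to
\[
Q=\log\lambda_1+\varphi(|\nabla u|^2)+\psi(u-h)
\]
(using $u-h$ rather than $u$, so that $Lh$ errors are absorbed), in the manner of Sz\'ekelyhidi and Tosatti--Weinkove for Hermitian torsion terms. The bad terms are third-order torsion terms, $\sum F^{i\bar i}$ times $|\nabla^2h|+|\nabla h|^2$, and the standard ones. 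The good terms are the concavity term, $-\psi' L(u-h)\ge c\psi'\sum F^{i\bar i}$ (from admissibility of $\chi$ and the $\kappa$-lemma of Sz\'ekelyhidi, valid since $f$ is homogeneous of degree one), and the $\beta$ terms above. We would choose $\psi$ depending on $\osc(u-h)\le C/\beta$, which is in particular bounded. Since the equation is homogeneous of degree one, the estimate comes out linear, $\lambda_1\le C(1+\|\nabla^2h\|+\|\nabla h\|^2+\|\nabla u\|^2)$, rather than exponential in $\|h\|_{C^2}$.

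\textbf{Gradient bound.} Define $K=1+\sup|\nabla u|^2$. The second-derivative step gives $|\partial\bar\partial u|\le C(1+\|\nabla^2 h\|+\|\nabla h\|^2+K)$, and $\Gamma\subset\Gamma_1$ bounds all eigenvalues from below. We then use a blow-up argument (Dinew--Ko{\l}odziej, Sz\'ekelyhidi): if $K_j\to\infty$ with $K_j\gg\|h\|_{C^2}$, rescale by $\sqrt{K_j}$. Because $\|h\|_{C^2}/K_j\to0$, the rescaled $u-h$ converges, and the rescaled limit is a bounded, nonconstant $\Gamma$-subharmonic function on $\C^n$ with $\lambda$ in $\partial\Gamma$ (the right-hand side $\le C$ scales away). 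This contradicts the Liouville theorem. Alternatively, the gradient can be handled directly in $Q$. We obtain $K\le C(1+\|\nabla^2h\|+\|\nabla h\|^2)$, and combining with the second-derivative step gives \eqref{eq:penalized-c11}.

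\textbf{Main obstacle.} The hard part is the second-derivative step. The constants must be uniform in $\beta$, and the third-derivative terms of $h$ are not available: only $h\in C^2$ is assumed. Differentiating $e^{\beta(u-h)}$ twice produces $\beta\nabla^2h$ and $\nabla^3$-free terms only, which is acceptable. However, torsion commutators and the choice of $\psi(u-h)$ (which brings in $Lh$) must be arranged so that only $\|h\|_{C^2}$ appears. As a first attempt I would test with $\psi(u-h)$ and exploit $\beta\lambda_1$ as a large good term when $\beta\ge\beta_0$.
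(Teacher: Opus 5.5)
Your outline has the same three components as the paper: the maximum principle for $C^0$; the test function $\log\lambda_1+\phi(|\nabla u|^2)+\psi$ with a good $\beta$-term, giving $\lambda_1\le C(1+\|\nabla u\|_\infty^2+D_h)$; and blow-up plus Sz\'ekelyhidi's Liouville theorem for the gradient. Your computations at the extremal points of $u$ do give the stated $C^0$ bounds. However, the inference $|u-h|\le C/\beta$, hence $|\beta(u-h)|\le C$, is false, because evaluating at extremal points of $u$ controls $u-h$ only there. Wherever $\lambda[\chi+\ddc h]\notin\overline\Gamma$, one has $P_{\Gamma,\chi}(h)<h$ (otherwise $h$ would be an upper test for the envelope). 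Since $u_{\beta,h}\to P_{\Gamma,\chi}(h)$ uniformly, $\beta(u-h)\to-\infty$ there, and the equation degenerates. From above, the maximum of $u-h$ together with Lemma~\ref{lem:rhs-growth} gives only $e^{\beta(u-h)}\le C(1+\|\partial\bar\partial h\|_\infty)$ (Lemma~\ref{lem:penalized-rhs}). Your blow-up step uses ``right-hand side $\le C$''. It survives only after you replace this by $CD_h$, which still scales away because $N_j\gg D_{h_j}$. Likewise, $\psi$ must be adapted to a bound depending on $\osc_M h$, not to a quantity of size $O(1/\beta)$.

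The second-order step, as you set it up, does not close. With $\psi(u-h)$, the term $\psi'L(u-h)$ contains $-\psi'F^{k\bar k}h_{k\bar k}$. This has indefinite sign and size up to $3A\|\nabla^2h\|_\infty\lintr$, where $\lintr=\sum_kF^{k\bar k}$, with no compensating factor $1/K$ or $1/\lambda_1$. The only good $\lintr$-term is $\epsilon_2A\lintr$, coming from the cone margin of $\chi$, so this term cannot be absorbed uniformly in $h$. Sz\'ekelyhidi's $\kappa$-lemma does not substitute: $h$ is not a subsolution, and the lemma's constants need an upper bound for the right-hand side, which here is only $h$-dependent. The paper uses $\psi(u)$ instead. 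Degree-one homogeneity gives $F^{k\bar k}u_{k\bar k}=e^{\beta(u-h)}-F^{k\bar k}\chi_{k\bar k}$, in which $h$ does not appear, and Lemma~\ref{lem:chengxu-pairing} gives $F^{k\bar k}(\chi_{k\bar k}-\epsilon_2)\ge0$. Hence $\psi'F^{k\bar k}u_{k\bar k}\ge-3Ae^{\beta(u-h)}+\epsilon_2A\lintr$.

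Contrary to your guiding principle, there is also a wrong-sign $\beta$-term. Through the once-differentiated equation, $\phi'F^{k\bar k}u_{pk\bar k}u_{\bar p}$ contains $\phi'\beta e^{\beta(u-h)}(u-h)_pu_{\bar p}\ge-\phi'\beta e^{\beta(u-h)}|\nabla h||\nabla u|$, which is of size $\beta e^{\beta(u-h)}\phi'K$. This is why the paper modifies the Hou--Ma--Wu function so that $\phi'<1/(4K)$. Then, once $\lambda_1>4K$, the $\beta$-terms together satisfy $\beta e^{\beta(u-h)}\bigl(\lambda_1(1-2K\phi')-\chi_{1\bar1}-h_{1\bar1}\bigr)/\lambda_1\ge\frac\beta4e^{\beta(u-h)}$. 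This absorbs $3Ae^{\beta(u-h)}$ as soon as $\beta\ge\beta_0$, with $A$ fixed beforehand so that $\epsilon_2A>C_0$. Without these two choices, you get neither uniformity in $\beta$ nor the linear dependence on $D_h$.
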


\begin{remark}
The proof does not invoke the level-set $\mathcal{C}$-subsolution dichotomy of
\cite{S}; instead it uses the fixed cone-interiority margin of $\chi$ and the good term $e^{\beta u_{\beta,h}}$.
The explicit dependence on the derivatives of $h$ is needed for
Theorem~\ref{thm:main}.
\end{remark}

\begin{corollary}\label{cor:penalized-c1alpha}
Suppose that $(f,\Gamma)$ satisfies Assumptions~\ref{ass1}--\ref{ass3}
and that $\lambda(g^{-1}\chi)\in\Gamma$.  If $h\in C^\infty(M)$, the
unique smooth solutions supplied by
\cite[Theorem~3.1(1)]{ChengXu2025} satisfy, for every $0<\alpha<1$,
\[
 \lim_{\beta\to\infty}
 \|u_{\beta,h}-P_{\Gamma,\chi}(h)\|_{C^{1,\alpha}(M)}=0.
\]
\end{corollary}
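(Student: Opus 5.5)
The argument is compactness plus identification of the limit. Throughout, fix $h\in C^\infty(M)$ and let $u_\beta:=u_{\beta,h}$ be the smooth admissible solutions of \eqref{eq:intro-penalized} supplied by \cite[Theorem~3.1(1)]{ChengXu2025}; Assumption~\ref{ass3} is used only through that existence and uniqueness result.

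\emph{Step 1: uniform $C^{1,1}$ bounds.} By Theorem~\ref{thm:penalized-c2}, for $\beta\ge\beta_0$ we have $\|u_\beta-h\|_{L^\infty}$ bounded, in fact with $\min h-C_1/\beta\le u_\beta\le \max h+C_1/\beta$. The same theorem gives $\|\partial\bar\partial u_\beta\|_\infty+\|\nabla u_\beta\|_\infty^2\le C$ with $C$ independent of $\beta$. On a compact manifold, a bounded complex Hessian controls the full real Hessian in $L^r$ for every $r<\infty$, by elliptic $L^r$ theory for the Laplacian, which is the trace of $\partial\bar\partial$ against $\omega$. Hence $\{u_\beta\}$ is bounded in $W^{2,r}$ for all $r$, and so in $C^{1,\alpha'}$ for every $\alpha'<1$. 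By Arzel\`a--Ascoli, every sequence $\beta_j\to\infty$ has a subsequence converging in $C^{1,\alpha}$ for each $\alpha<1$ to some $u_\infty\in C^{1,1}$. It therefore suffices to show that every such limit equals $P:=P_{\Gamma,\chi}(h)$; uniqueness of the limit then gives convergence of the whole family.

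\emph{Step 2: $u_\infty\le P$.} First, $u_\infty$ is a viscosity subsolution of $\lambda[\chi+\ddc u_\infty]\in\overline\Gamma$: each $u_\beta$ is classically admissible, so it is a viscosity subsolution, and viscosity subsolutions of this closed condition are stable under uniform limits. Second, $u_\infty\le h$. At any point where $u_\beta-h\ge\delta$, the equation gives $f(\lambda[\chi+\ddc u_\beta])\ge e^{\beta\delta}$. But $\|\partial\bar\partial u_\beta\|_\infty\le C$, so $f(\lambda[\chi+\ddc u_\beta])\le C'$ by continuity of $f$ on bounded sets. Thus $u_\beta\le h+(\log C')/\beta$, and letting $\beta\to\infty$ gives $u_\infty\le h$. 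So $u_\infty$ is a competitor in the envelope and $u_\infty\le P$.

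\emph{Step 3: $P\le u_\infty$.} This is the step I expect to be the main obstacle, since $P$ is only defined as a viscosity envelope and $u_\beta$ does not lie above it directly. I would use a comparison argument with a small perturbation. Fix $\varepsilon>0$ and let $v$ be any USC competitor with $v\le h$ and $\lambda[\chi+\ddc v]\in\overline\Gamma$. Set $v_\varepsilon=(1-\varepsilon)v-\varepsilon A$, with $A$ a constant chosen so that $v_\varepsilon\le h-\varepsilon$ (possible since $v\le h$ and $h$ is bounded). Because $\chi$ is strictly admissible, $\Gamma$ is a convex cone, and $f$ is concave and homogeneous, we get $\lambda[\chi+\ddc v_\varepsilon]=\lambda[(1-\varepsilon)(\chi+\ddc v)+\varepsilon\chi]$, which lies in $\Gamma$ with $f\ge\varepsilon c_0$ in the viscosity sense, where $c_0=\min f(\lambda(g^{-1}\chi))>0$.

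On the other side, $u_\beta$ satisfies $f=e^{\beta(u_\beta-h)}$. Wherever $u_\beta<v_\varepsilon\le h-\varepsilon$, this gives $f(\lambda[\chi+\ddc u_\beta])\le e^{-\beta\varepsilon}<\varepsilon c_0$ once $\beta$ is large. So on the open set $\{u_\beta<v_\varepsilon\}$, $v_\varepsilon$ is a viscosity subsolution of the operator and $u_\beta$ is a strict supersolution of it. The viscosity comparison principle for the degenerate-elliptic operator $f$, applied on that set, then forces $\{u_\beta<v_\varepsilon\}$ to be empty, i.e.\ $u_\beta\ge v_\varepsilon$. The underlying mechanism is the standard one: at a minimum point of $u_\beta-v_\varepsilon$, smooth $u_\beta$ touches $v_\varepsilon$ from above, and monotonicity of $f$ combined with the strict gap yields a contradiction.

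Letting $\beta\to\infty$ and then $\varepsilon\to0$ gives $u_\infty\ge v$. Taking the supremum over competitors $v$, and using that $u_\infty$ is continuous so it absorbs the usc regularization, yields $P\le u_\infty$. Combined with Step 2, $u_\infty=P$, which completes the argument.
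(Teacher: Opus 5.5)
Your proposal is correct and follows essentially the paper's route. The uniform bounds from Theorem~\ref{thm:penalized-c2}, together with the $W^{2,p}$ estimate for the Chern Laplacian, give $C^{1,\alpha}$ precompactness; every limit is then identified with $P_{\Gamma,\chi}(h)$ by the same two-sided argument as in Proposition~\ref{prop:uniform-convergence}, namely the upper bound on $e^{\beta(u_\beta-h)}$ for $u_\infty\le P$ and the strictified competitor $(1-\varepsilon)v-\varepsilon A$ for $P\le u_\infty$. The differences are cosmetic: you identify subsequential limits directly rather than first proving uniform convergence with a rate, and you replace the cited comparison principle by a direct touching argument at a minimum point of the lower semicontinuous function $u_\beta-v_\varepsilon$, which is legitimate because $u_\beta$ is smooth (the set $\{u_\beta<v_\varepsilon\}$ need not be open when $v$ is merely upper semicontinuous, but that argument does not use openness).
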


Theorem~\ref{thm:main} does not assume Assumption~\ref{ass2}. To apply
Theorem~\ref{thm:penalized-c2} in the proof of Theorem~\ref{thm:main}, we
therefore construct an auxiliary operator.

\begin{theorem}\label{thm:canonical-operator}
For every cone $\Gamma$ satisfying Assumption~\ref{ass1}(1), there is a
function $f_\Gamma$ such that $(f_\Gamma,\Gamma)$ satisfies
Assumptions~\ref{ass1}--\ref{ass3}.
\end{theorem}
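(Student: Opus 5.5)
Given a cone $\Gamma$ satisfying Assumption~\ref{ass1}(1), we build $f_\Gamma$ from the \emph{distance-to-the-boundary} function of the cone, averaged so that it becomes smooth. The natural first candidate is
\[
 d_\Gamma(\lambda)=\operatorname{dist}(\lambda,\partial\Gamma)
 =\inf_{\nu\in\Gamma^*,\ |\nu|=1}\langle\nu,\lambda\rangle,
 \qquad \lambda\in\Gamma,
\]
where $\Gamma^*=\{\nu:\langle\nu,\lambda\rangle\ge 0\ \forall\lambda\in\Gamma\}$ is the dual cone. As an infimum of linear functions it is concave and homogeneous of degree one. It is symmetric because $\Gamma$ is. It is positive on $\Gamma$ and vanishes on $\partial\Gamma$. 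Since $\Gamma\subset\Gamma_1$ we have $\Gamma^*\supset\Gamma_1^*=\R_{\ge0}(1,\dots,1)$, and since $\Gamma_n\subset\Gamma$ we have $\Gamma^*\subset\overline{\Gamma_n}$. Hence every $\nu\in\Gamma^*$ has nonnegative entries, so $d_\Gamma$ is nondecreasing in each $\lambda_i$.

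However $d_\Gamma$ is neither smooth nor strictly increasing, and Assumption~\ref{ass3} needs uniform ellipticity in a determinant sense. I therefore replace the infimum by a smooth concave average. The plan is to take
\[
 f_\Gamma(\lambda)=\Bigl(\int_{K}\langle\nu,\lambda\rangle^{-1}\,d\sigma(\nu)\Bigr)^{-1},
\]
with $K=\Gamma^*\cap\{|\nu|=1\}$ and $\sigma$ a symmetric probability measure on $K$. A harmonic mean of positive linear forms is concave, homogeneous of degree one, smooth on $\Gamma$, and tends to $0$ at $\partial\Gamma$; the last point needs $\sigma$ to have enough mass near each boundary normal. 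A cleaner alternative is to add a small multiple of a G\aa rding-type function. Concretely, set
\[
 f_\Gamma=d_\Gamma^\epsilon\cdot\text{(normalization)}+\epsilon\,\sigma_n^{1/n},
\]
but $\sigma_n^{1/n}$ is not defined on $\Gamma\setminus\Gamma_n$, so this fails.

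Instead I would use the \emph{product} trick, which also gives Assumption~\ref{ass3}:
\[
 f_\Gamma(\lambda)=\Bigl(\prod_{\nu}\langle\nu,\lambda\rangle\Bigr)^{1/N}
\]
over a finite symmetric family of normals. For a general $\Gamma$, which need not be polyhedral, this must be replaced by a geometric mean
\[
 f_\Gamma(\lambda)=\exp\Bigl(\int_K\log\langle\nu,\lambda\rangle\,d\sigma(\nu)\Bigr),
\]
after first enlarging $K$ into the interior, which forces strict monotonicity: take $\nu_t=(1-t)\nu+t\,\mathbf 1/\sqrt n$. One then checks two things. The first is concavity, since a geometric mean of positive linear forms is concave by the log-concavity and homogeneity argument. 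The second is $f_\Gamma=0$ on $\partial\Gamma$, which requires that for every boundary point some set of positive $\sigma$-measure of $\nu$'s makes $\int\log$ diverge. This last point conflicts with shifting $\nu$ into the interior, so the correct construction is to keep $\sigma$ supported on all of $K$ with full support. Strict monotonicity $f_i>0$ then comes from the elements of $K$ with positive $i$-th entry, for example $\mathbf 1/\sqrt n$.

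The main obstacle is Assumption~\ref{ass3}: a lower bound on $\det(\partial F/\partial A_{i\bar j})$, uniform on all of $\Gamma$. For a geometric mean the derivative is
\[
 \frac{\partial F}{\partial A}=F\int_K\frac{\nu(A)'}{\langle\nu,\lambda\rangle}\,d\sigma,
\]
where $\nu(A)'$ is the matrix $U\operatorname{diag}(\nu)U^*$ averaged over permutations. I would prove the determinant bound in the spirit of Gurvits, via the matrix AM--GM/Minkowski inequality
\[
 \det\Bigl(\int H_\nu\,d\sigma\Bigr)^{1/n}\ge\exp\int\log\det(H_\nu)^{1/n}\,d\sigma.
\]
Combined with symmetrization in $\nu$, this reduces the claim to $\prod_i\partial_i f\ge\gamma_0$ at the eigenvalue level, because $\partial F/\partial A$ is diagonal with entries $f_i$ in an eigenframe. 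That inequality in turn follows from homogeneity and concavity by the standard argument for hyperbolic-type means: $\sum_i\lambda_if_i=f$, together with AM--GM over the symmetric measure. If this direct route fails, I would fall back on citing Gurvits \cite{Gur} after approximating $\Gamma$ from inside by G\aa rding cones of hyperbolic polynomials built from finitely many normals, and passing to the limit with uniform $\gamma_0$.
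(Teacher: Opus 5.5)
There is a genuine gap at the requirement $f_\Gamma=0$ on $\partial\Gamma$ in Assumption~\ref{ass1}(2). You flag this point yourself but then resolve it incorrectly. For the geometric mean $f_\Gamma(\lambda)=\exp\bigl(\int_K\log\langle\nu,\lambda\rangle\,d\sigma(\nu)\bigr)$, giving $\sigma$ full support on the $(n-1)$-dimensional set $K$ does not supply ``enough mass,'' because a logarithmic singularity is integrable in every dimension.

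Concretely, take $\Gamma=\Gamma_n$ with $n\geq2$, let $\sigma$ be normalized surface measure on $K=\overline{\Gamma_n}\cap\{|\nu|=1\}$, and let $\lambda_0=(0,1,\ldots,1)\in\partial\Gamma_n$. On $K$ one has $\langle\nu,\lambda_0\rangle=\nu_2+\cdots+\nu_n\geq|\nu-e_1|/\sqrt2$. Hence $\int_K\log\langle\nu,\lambda_0\rangle\,d\sigma>-\infty$, and monotone convergence gives $f_\Gamma(\lambda_0+\epsilon\mathbf 1)\to f_\Gamma(\lambda_0)>0$ as $\epsilon\downarrow0$. Your harmonic mean fails the same way for $n\geq3$, since $|\nu-e_1|^{-1}$ is integrable on an $(n-1)$-dimensional set.

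Atoms on the extreme rays of $\Gamma^{*}$ repair this for polyhedral cones; that is your finite product. For a non-polyhedral cone such as $\Gamma_2\subset\R^3$, a round cone whose dual has a circle of boundary rays, you would need one measure making $\int\log\langle\nu,\lambda_0\rangle\,d\sigma=-\infty$ at every boundary point simultaneously. Nothing in the proposal produces such a measure. The Gurvits fallback does not help either, since nothing in it controls the limiting operator on $\partial\Gamma$. Your Assumption~\ref{ass3} step is not where the problem lies: for the geometric mean, coordinatewise Jensen gives $\prod_i f_i\geq\exp\int_K\sum_i\log\nu_i\,d\sigma$ once the factor $f^n$ cancels. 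This is positive whenever the $\log\nu_i$ are $\sigma$-integrable.

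The missing idea is that the average must use a negative power strong enough to be non-integrable over the $(n-1)$-dimensional slice; the paper uses the power $-n$. For $\Gamma=\Gamma_1$ it takes $\frac1n\sum_i\lambda_i$. For $\Gamma\neq\Gamma_1$ it sets $f_\Gamma=(\Phi_{\overline{\Gamma}}(e)/\Phi_{\overline{\Gamma}})^{1/n}$, where $\Phi_{\overline{\Gamma}}(\lambda)=\int_{\overline{\Gamma}^{\,*}}e^{-\langle\lambda,y\rangle}\,dy$ is the Koszul--Vinberg characteristic function. After the change of variables $y=rp$, this is exactly $f_\Gamma(\lambda)=\bigl(\int_K\langle p,\lambda\rangle^{-n}\,d\mu(p)\bigr)^{-1/n}$ over $K=\overline{\Gamma}^{\,*}\cap\{\langle p,e\rangle=1\}$.

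Concavity comes, as in your argument, from log-concavity (H\"older makes $\log\Phi_{\overline{\Gamma}}$ convex) plus homogeneity. Strict monotonicity comes from $\overline{\Gamma}^{\,*}\subset\overline{\Gamma_n}$ being full-dimensional. The boundary condition now holds. If $\lambda_j\to\lambda_0\in\partial\Gamma$, choose $0\neq y_0\in\overline{\Gamma}^{\,*}$ with $\langle\lambda_0,y_0\rangle=0$. For a small disk $D$ in the interior of the dual, the half-cylinder $\{sy_0+z:s\geq0,\ z\in D\}$ lies in $\overline{\Gamma}^{\,*}$. Integrating over it gives $\Phi_{\overline{\Gamma}}(\lambda_j)\geq c/\langle\lambda_j,y_0\rangle\to\infty$, so $f_\Gamma(\lambda_j)\to0$.

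Assumption~\ref{ass3} then follows in two steps. First, $\overline{\Gamma}^{\,*}\subset\overline{\Gamma_n}$ gives $\Phi_{\overline{\Gamma}}(\lambda)\leq(\lambda_1\cdots\lambda_n)^{-1}$, hence $f_\Gamma\geq c(\lambda_1\cdots\lambda_n)^{1/n}$ on $\Gamma_n$. Second, Guo--Phong--Tong's Lemma~4, which is the concavity-plus-homogeneity argument you allude to, converts this into $\prod_i(f_\Gamma)_i\geq\gamma_\Gamma$.
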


\begin{corollary}\label{cor:envelope-regularity}
Let $\Gamma$ satisfy Assumption~\ref{ass1}(1), assume
$\lambda(g^{-1}\chi)\in\Gamma$, and let $h\in C^2(M)$.  Then, for every
$0<\alpha<1$ and every finite $p$,
\[
 P_{\Gamma,\chi}(h)\in C^{1,\alpha}(M)\cap W^{2,p}(M),
 \qquad
 \ddc P_{\Gamma,\chi}(h)\in L^\infty(M).
\]
Moreover, its Lipschitz and complex-Hessian bounds have the quantitative
dependence displayed in \eqref{eq:penalized-c11}.
\end{corollary}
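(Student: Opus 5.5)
Corollary~\ref{cor:envelope-regularity} follows by combining Theorem~\ref{thm:canonical-operator}, Theorem~\ref{thm:penalized-c2} and Corollary~\ref{cor:penalized-c1alpha}. The key point is that the envelope $P_{\Gamma,\chi}(h)$ depends only on the cone $\Gamma$, not on the operator $f$. So we may replace any given operator by the canonical operator $f_\Gamma$ of Theorem~\ref{thm:canonical-operator}, which satisfies Assumptions~\ref{ass1}--\ref{ass3}.

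\textbf{Smooth obstacles.} First let $h\in C^\infty(M)$. For $\beta$ large, \cite[Theorem~3.1(1)]{ChengXu2025} provides smooth admissible solutions $u_{\beta,h}$ of \eqref{eq:intro-penalized} with $f=f_\Gamma$.
\begin{enumerate}[label=\textup{(\roman*)}]
\item By Theorem~\ref{thm:penalized-c2}, $\|\partial\bar\partial u_{\beta,h}\|_\infty+\|\nabla u_{\beta,h}\|_\infty^2$ is bounded by $C_2(1+\|\nabla^2h\|_\infty+\|\nabla h\|_\infty^2)$, uniformly in $\beta$.
\item By Corollary~\ref{cor:penalized-c1alpha}, $u_{\beta,h}\to P_{\Gamma,\chi}(h)$ in $C^{1,\alpha}$.
\item The uniform gradient bound passes directly to the limit, giving the Lipschitz bound for $P_{\Gamma,\chi}(h)$.
\item For the complex Hessian, the mixed second derivatives of $u_{\beta,h}$ are uniformly bounded in $L^\infty$, so they converge weak-$*$ in $L^\infty$ along a subsequence. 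The limit must be the distributional $\ddc P_{\Gamma,\chi}(h)$, with the same sup bound.
\end{enumerate}
The real Hessian is not directly controlled. However, $\Delta_\omega u_{\beta,h}$ is a trace of $\partial\bar\partial u_{\beta,h}$ plus first-order torsion terms, so it is uniformly bounded. Interior $L^p$ elliptic estimates for the Laplacian, applied in coordinate charts, then give a uniform $W^{2,p}$ bound for every finite $p$. This bound passes to the limit by weak compactness, so $P_{\Gamma,\chi}(h)\in W^{2,p}$. The $C^{1,\alpha}$ regularity follows either from (ii) or from Sobolev embedding.

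\textbf{General $h\in C^2$.} Approximate $h$ in $C^2$ by smooth $h_j$; for instance, mollify in charts with a partition of unity. The envelope is monotone and commutes with constants: $P(h+c)=P(h)+c$, since $\lambda[\chi+\ddc v]$ is unchanged by adding constants. Hence $\|P(h_j)-P(h)\|_\infty\le\|h_j-h\|_\infty\to0$. The bounds from the smooth step are uniform in $j$, because they depend on $h_j$ only through $\|\nabla^2h_j\|_\infty$, $\|\nabla h_j\|_\infty$ and $\osc h_j$, all of which converge to the corresponding quantities for $h$. Arzel\`a--Ascoli and weak compactness then transfer the $C^{1,\alpha}$, $W^{2,p}$ and $\ddc$ bounds, with the dependence displayed in \eqref{eq:penalized-c11}.

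\textbf{Main obstacle.} The delicate point is justifying that the envelope is independent of $f$ and is exactly the limit appearing in Corollary~\ref{cor:penalized-c1alpha}. This holds because $P_{\Gamma,\chi}$ is defined purely through the viscosity condition $\lambda\in\overline\Gamma$. Also, the $C^2$ data bounds from Theorem~\ref{thm:penalized-c2} are used as stated, without any further regularity of $h$.
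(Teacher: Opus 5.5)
Your proposal is correct and follows the paper's route. The paper's proof switches to the canonical operator $f_\Gamma$ of Theorem~\ref{thm:canonical-operator}, solves the penalized equation, and combines the uniform bounds of Theorem~\ref{thm:penalized-c2} with the convergence of Corollary~\ref{cor:penalized-c1alpha}, exactly as in your smooth-obstacle step. Your extra density argument from $C^\infty$ to $C^2$ obstacles is a worthwhile addition: it uses $\|P_{\Gamma,\chi}(h_j)-P_{\Gamma,\chi}(h)\|_\infty\le\|h_j-h\|_\infty$ and the fact that the constants depend on $h$ only through $\osc_M h$, $\|\nabla h\|_\infty$ and $\|\nabla^2 h\|_\infty$. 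The paper's one-line proof silently relies on Corollary~\ref{cor:penalized-c1alpha} and Proposition~\ref{prop:uniform-convergence}, both of which are stated only for smooth $h$, so your step closes that gap.
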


\begin{remark}
The conclusion $\ddc P\in L^\infty$ is not, by itself, the standard real
$C^{1,1}$ estimate.  For the complex Hessian cones $\Gamma_m$, the stronger
optimal real $C^{1,1}$ regularity is proved by Chu--McCleerey
\cite{ChuMcCleerey2021} using an additional real-Hessian argument.
\end{remark}

\subsection{Regularization strategy}

The main point is a regularization mechanism that is independent of the
equation satisfied by $u$.
Given a function $u\in SH_{\chi,\Gamma}(M,\omega)\cap L^{\infty}(M)$, let
\[
 H_t=\exp(t\Delta_\omega^{\mathrm{Ch}}),
 \qquad C_\chi=\sup_M\operatorname{tr}_\omega\chi,
\]
where $\Delta_{\omega}^{\mathrm{Ch}}$ is the Chern Laplacian of the Hermitian
metric $\omega$. We define
\begin{equation}\label{eq:intro-regularization}
 \mathcal R_tu
 :=P_{\Gamma,\chi}(h_t),
 \qquad h_t:=H_tu+C_\chi t.
\end{equation}

Let $f_{\Gamma}$ be the function defined in
Theorem~\ref{thm:canonical-operator}. We solve
\begin{equation}
 f_{\Gamma}(\lambda[\chi+\ddc u_{\beta,h_t}])
 =e^{\beta(u_{\beta,h_t}-h_t)},
 \qquad \lambda[\chi+\ddc u_{\beta,h_t}]\in\Gamma.
\end{equation}
We then choose $\beta(t)$ sufficiently large that
$\|u_{\beta(t),h_t}-\mathcal R_tu\|_{\infty}\leq t$.

We use $u_t:=u_{\beta(t),h_t}$ as our regularization of $u$. It satisfies the
following properties.
\begin{theorem}\label{thm:regularization}
For every $u\in SH_{\chi,\Gamma}(M,\omega)\cap L^{\infty}(M)$, the
regularization $u_t$, $0<t\leq1$, satisfies:
\begin{enumerate}
\item $u_t\in C^{\infty}(M)\cap SH_{\chi,\Gamma}(M,\omega)$.
\item $\|u_t-u\|_{L^1}\leq Ct$, where $C$ depends on $\|u\|_{\infty}$.
\item $u_t\geq u-t$.
\item $\Lip(u_t)\leq Ct^{-1/2}$, where $C$ depends on $\|u\|_{\infty}$.
\item $\|u_t\|_{\infty}\leq C$, where $C$ depends on
      $\|u\|_{\infty}$.
\end{enumerate}
\end{theorem}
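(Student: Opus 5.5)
The plan is to verify the five properties one at a time. Everything rests on three ingredients: standard heat-kernel estimates for $H_t=\exp(t\Delta^{\Ch}_\omega)$, the comparison $u\le h_t$, and the quantitative bounds of Theorem~\ref{thm:penalized-c2} applied with $h=h_t$ and $f=f_\Gamma$. By Theorem~\ref{thm:canonical-operator}, the pair $(f_\Gamma,\Gamma)$ satisfies Assumptions~\ref{ass1}--\ref{ass3}. Hence \cite[Theorem~3.1(1)]{ChengXu2025} supplies smooth admissible solutions $u_{\beta,h_t}$, which gives (1) immediately. Corollary~\ref{cor:penalized-c1alpha} gives uniform convergence to $P_{\Gamma,\chi}(h_t)$. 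Strictly, that corollary is stated for smooth $h$; here $h_t=H_tu+C_\chi t$ is smooth for $t>0$ by parabolic smoothing, even though $u$ is only bounded. This convergence is what justifies the choice of $\beta(t)$ with $\|u_t-\mathcal R_tu\|_\infty\le t$.

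\textbf{Step 1: $u\le h_t$, hence $\mathcal R_tu\ge u$.} Since $\lambda[\chi+\ddc u]\in\overline\Gamma\subset\overline\Gamma_1$, we have $\tr_\omega(\chi+\ddc u)\ge0$. Because the Chern Laplacian acting on functions is $\tr_\omega\ddc$, this gives $\Delta^{\Ch}_\omega u\ge-C_\chi$ in the viscosity, equivalently distributional, sense. The function $w(s)=H_su+C_\chi s$ then satisfies $\partial_s w=H_s(\Delta^{\Ch} u+C_\chi)\ge0$, using positivity of the heat semigroup. Justifying this for bounded $u$ is the subtle point: I would approximate $u$ from above by smooth functions, or argue by duality against the adjoint heat semigroup. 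Granting it, $h_t\ge H_0u=u$. Since $u$ is itself a competitor in the envelope defining $P_{\Gamma,\chi}(h_t)$, we get $\mathcal R_tu\ge u$. Combined with $\|u_t-\mathcal R_tu\|_\infty\le t$, this yields (3): $u_t\ge u-t$.

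\textbf{Step 2: sup bound and $L^1$ estimate.} The maximum principle gives $\|H_tu\|_\infty\le\|u\|_\infty$. Together with $\mathcal R_tu\le h_t$, this gives $u-t\le u_t\le \|u\|_\infty+C_\chi+1$, which proves (5). For (2), write
\[
|u_t-u|\le (u_t-u+t)+t\le (h_t-u)+2t.
\]
Integrating, $\int_M(h_t-u)\,\omega^n=\int_M(H_tu-u)\,\omega^n+C_\chi t\Vol$. The first integral is $\int_0^t\int_M\Delta^{\Ch}_\omega H_su\,\omega^n\,ds$. In the non-Gauduchon case one replaces the measure by the invariant probability measure $e^{\psi}\omega^n$ of the Chern heat flow, or by the Gauduchon volume, which is comparable. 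Integrating by parts, this becomes $\int_0^t\int_M H_su\,\Delta^*(e^\psi)\,ds=O(t\|u\|_\infty)$, which gives (2).

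\textbf{Step 3: the Lipschitz bound, which I expect to be the main obstacle.} Theorem~\ref{thm:penalized-c2} gives
\[
\|\nabla u_t\|_\infty^2\le C_2\bigl(1+\|\nabla^2h_t\|_\infty+\|\nabla h_t\|_\infty^2\bigr),
\]
with $C_2$ depending only on $\osc h_t\le 2\|u\|_\infty+C$. The standard gradient and Hessian heat-kernel bounds on a compact manifold (Li--Yau, or parametrix estimates for the Chern Laplacian, which differs from the Levi-Civita Laplacian only by a first-order term) give $\|\nabla H_tu\|_\infty\le Ct^{-1/2}\|u\|_\infty$ and $\|\nabla^2H_tu\|_\infty\le Ct^{-1}\|u\|_\infty$ for $0<t\le1$. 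Hence $\|\nabla u_t\|_\infty^2\le C t^{-1}$, which is (4). The delicate points are twofold. First, $C_2$ must be independent of $\beta$, so that choosing $\beta(t)$ large is harmless; Theorem~\ref{thm:penalized-c2} explicitly guarantees this. Second, $C_2$ must not depend on derivative norms of $h_t$; the theorem gives exactly this linear dependence on $\|\nabla^2h\|_\infty+\|\nabla h\|_\infty^2$. The remaining work is the heat-kernel derivative estimates for the non-self-adjoint operator $\Delta^{\Ch}_\omega$. For these I would use Duhamel's formula, comparing with the Riemannian heat kernel, to carry over the $t^{-1/2}$ and $t^{-1}$ scalings.
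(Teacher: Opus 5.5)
Your proposal is correct and takes essentially the same route as the paper. That route is: $u\le h_t$ from the trace inequality $C_\chi+\Delta^{\Ch}_\omega u\ge 0$ and positivity of the Chern heat semigroup; the sandwich $u\le P_{\Gamma,\chi}(h_t)\le h_t$ combined with $\|u_t-\mathcal R_tu\|_\infty\le t$ for (3) and (5); a weighted-mass (Gauduchon) computation for the $L^1$ rate; and Theorem~\ref{thm:penalized-c2} with the heat-kernel bounds $\|\nabla h_t\|_\infty^2+\|\nabla^2h_t\|_\infty\le Ct^{-1}$ for the Lipschitz estimate. Two harmless slips: the pointwise bound is $|u_t-u|\le (h_t-u)+3t$ rather than $+2t$, and against the invariant density $\rho_\omega$ the heat term vanishes exactly, giving $\int_M(h_t-u)\rho_\omega\,dV_\omega=C_\chi t$ as in the paper.
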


Although the theorem assumes that $u\in L^{\infty}$, the regularization $u_t$
can also be defined for unbounded $u$, provided that the Chern heat semigroup
$H_t$ acts on $u$. This observation may be useful when $\Gamma=\Gamma_k$,
because quasi-$m$-subharmonic functions in $SH_{\chi,\Gamma_k}$ are well
studied and may be unbounded while satisfying suitable integrability
conditions; see \cite{Fang2026}. It would be interesting to study the behavior
of $u_t$ for $u\in SH_{\chi,\Gamma_k}$, in analogy with Demailly's
regularization of quasi-plurisubharmonic functions \cite{Demailly1992}.

The paper is organized as follows. Section~3 collects the viscosity,
pluripotential, and cone-theoretic preliminaries. Section~4 develops the
Chern heat-semigroup estimates used in the regularization. Section~5 studies
the penalized obstacle equation and proves estimates uniform in the
penalization parameter. Section~6 constructs the canonical cone operator and
establishes regularity of the associated envelope. Section~7 proves
Theorem~\ref{thm:regularization}. Section~8 establishes the quantitative
stability estimate, and Section~9 combines regularization and stability to
prove Theorem~\ref{thm:main} and Corollary~\ref{cor:main}.

\section{Acknowledgments}
The author thanks Professor Jingrui Cheng for introducing him to this problem and for
their collaboration when the author was a second-year PhD student. The author also
thanks Professor Duong H. Phong for his encouragement and helpful comments on an
earlier version of this paper.
\section{Preliminary}

\begin{defn}
\[
\Gamma_k=\{\lambda \in \R^n: \sigma_i(\lambda)>0 \text{ for } 1\leq i\leq k\}.
\]
Here
$\sigma_i(\lambda)=\sum_{1\leq j_1<j_2<\cdots<j_i\leq n}
\lambda_{j_1}\lambda_{j_2}\cdots\lambda_{j_i}$.
\end{defn}

We shall repeatedly use the spectral matrix cone
\[
 \mathcal C_\Gamma
 :=\{A\in\operatorname{Herm}(n):\lambda(A)\in\Gamma\}.
\]
Since $\Gamma$ is symmetric and convex, $\mathcal C_\Gamma$ is convex.
Moreover,
\begin{equation}
 \mathcal C_\Gamma+\overline{\mathcal C_{\Gamma_n}}
 \subset\mathcal C_\Gamma.
 \label{eq:spectral-cone-monotonicity}
\end{equation}
Indeed, the convexity assertion is the standard spectral-convexity theorem;
the monotonicity follows from Weyl monotonicity and
$\Gamma+\overline{\Gamma_n}\subset\Gamma$.

\subsection{Viscosity theory}

We first explain the meaning of ``touching from above or below.''
\begin{defn}\label{touch}
Let $\varphi$ be a function defined on $M$ and $x_0\in M$.  Let $\psi$ be another function defined on an open subset of $M$ containing $x_0$.  
\begin{enumerate}
\item We say that $\psi$ touches $\varphi$ from above at $x_0$ if there exists an open neighborhood $U$ of $x_0$ such that $\psi(x_0)=\varphi(x_0)$ and $\psi\geq\varphi$ on $U$.
\item We say that $\psi$ touches $\varphi$ from below at $x_0$ if there exists an open neighborhood $U$ of $x_0$ such that $\psi(x_0)=\varphi(x_0)$ and $\psi\leq\varphi$ on $U$.
\end{enumerate}
\end{defn}

\begin{defn}
Let $G\in C(M)$, $c\in\R$, and $\varphi\in C(M)$. We say that $\varphi$
is a viscosity solution of \eqref{eq:main} if the following conditions hold:
\begin{enumerate}
\item For any $x_0\in M$ and any $C^2$ function $P$ defined in a neighborhood of $x_0$ that touches $\varphi$ from above at $x_0$,  one has
\begin{equation*}
\lambda[\chi+dd^cP](x_0)\in \overline{\Gamma},\,\,f\big(\lambda[\chi+dd^cP]\big)(x_0)\ge e^{G(x_0)+c}.
\end{equation*}
\item For any $x_0\in M$ and any $C^2$ function $P$ defined in a neighborhood of $x_0$ that touches $\varphi$ from below at $x_0$, either $\lambda[\chi+dd^cP](x_0)\notin \Gamma$, or $\lambda[\chi+dd^cP](x_0)\in \Gamma$ and $f\big(\lambda[\chi+dd^cP]\big)(x_0)\leq e^{G(x_0)+c}$.
\end{enumerate}
\end{defn}

\begin{defn}\label{d2.2}
We say that $\lambda[\chi+ dd^c \varphi ]\in \overline{\Gamma}$ in the viscosity sense, if for any $x_0\in M$ and any $C^2$ function $P$ defined in a neighborhood of $x_0$ that touches $\varphi$ from above at $x_0$, one has:
\begin{equation*}
\lambda[\chi+ dd^c P](x_0)\in \overline{\Gamma}.
\end{equation*}
We define $SH_{\chi,\Gamma}(M,\omega)=\{u \in USC(M): \lambda[\chi+ dd^c u] \in \overline{\Gamma} \text{ in the viscosity sense}\}$.
\end{defn}

\subsection{Pluripotential theory}

\begin{defn}
Let $\Omega\subset\C^n$ be an open domain equipped with a Hermitian metric
$\omega$. An upper semicontinuous function
\[
u\colon \Omega\longrightarrow[-\infty,+\infty)
\]
is called $m$-$\omega$-subharmonic if
$u\in L_{\mathrm{loc}}^1(\Omega)$ and, for any collection
\[
\gamma_1,\ldots,\gamma_{m-1}\in\Gamma_m(\omega),
\]
we have
\[
dd^c u\wedge\gamma_1\wedge\cdots\wedge\gamma_{m-1}
\wedge\omega^{n-m}\geq 0
\]
in the sense of currents.
\end{defn}

\begin{defn}
\cite[Definition 2.4 and Lemma 9.10]{GuNguyen2018} Let $(M,\omega)$ be a Hermitian manifold. Let $\chi$ be a smooth $(1,1)$ form. A function
\[
u\colon M\longrightarrow[-\infty,+\infty)
\]
is called $(\chi,m)$-$\omega$-subharmonic if it can be written
locally as a sum of a smooth function and an $\omega$-sh function, and
globally, for any collection
\[
\gamma_1,\ldots,\gamma_{m-1}\in\Gamma_m(M,\omega),
\]
we have
\begin{equation}
(\chi+dd^c u)\wedge\gamma_1\wedge\cdots\wedge\gamma_{m-1}
\wedge\omega^{n-m}\geq 0
\qquad\text{on }M
\end{equation}
in the weak sense of currents. Denote by
$SH_{\chi,m}(M,\omega)$, or simply $SH_{\chi,m}(\omega)$, the set
of all $(\chi,m)$-$\omega$-sh functions on $M$.
\end{defn}

\begin{defn}\cite[Section 9]{KNmeasure}
If $\Omega$ is a local coordinate chart on $M$ and $\rho$ is a
strictly psh function on $\Omega$ such that
\[
dd^c\rho\geq\chi
\qquad\text{on }\Omega,
\]
then $u+\rho$ is an $m$-$\omega$-sh function on $\Omega$. This observation
extends the definition of the wedge product for
currents associated with bounded $(\chi,m)$-$\omega$-sh functions
by using a partition of unity.
Namely, write
\[
\tau=\chi-dd^c\rho,
\]
which is a smooth $(1,1)$-form. Then
\[
\chi+dd^c u=dd^c(u+\rho)+\tau.
\]

We define
\begin{align*}
H_{\chi,m}(u)
:=(\chi+dd^c u)^m\wedge\omega^{n-m}
&:=\sum_{j=0}^{m}\binom{m}{j}
   \bigl[dd^c(u+\rho)\bigr]^j
   \wedge\tau^{m-j}\wedge\omega^{n-m} \\
&=\sum_{j=0}^{m}\binom{m}{j}
   \mathcal{L}_j(u+\rho)\wedge\tau^{m-j}.
\end{align*}
\end{defn}

\subsection{\texorpdfstring{$\Gamma$}{Gamma}-solutions}
\begin{defn}\label{def:gamma-solution}
Following \cite[Definition~15]{S}, suppose
$u\colon\mathbb{C}^n\to\mathbb{R}$ is continuous. We say that
$u$ is a (viscosity) $\Gamma$-subsolution if, for all $h\in C^2$ such
that $u-h$ has a local maximum at $z$, we have
\[
\lambda\bigl(h_{i\bar{j}}\bigr)\in\overline{\Gamma},
\]
where $\lambda(A)$ denotes the eigenvalues of the Hermitian matrix $A$.

We say that $u$ is a $\Gamma$-solution if it is a
$\Gamma$-subsolution and, in addition, for all $z\in\mathbb{C}^n$, if
$h\in C^2$ and $u-h$ has a local minimum at $z$, then
\[
\lambda\bigl(h_{i\bar{j}}(z)\bigr)
\in\mathbb{R}^n\setminus\Gamma.
\]
\end{defn}

\begin{lemma}\label{lem:gamma-liouville}
\cite[Theorem~20]{S}.  Let $u\colon \mathbb{C}^n\to\mathbb{R}$ be a Lipschitz
$\Gamma$-solution such that $|u|<C$ and $u$ has Lipschitz constant
bounded by $C$. Then $u$ is constant.
\end{lemma}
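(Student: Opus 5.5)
This is Székelyhidi's Liouville theorem \cite[Theorem~20]{S}. The plan is to reduce it to the classical statement that a bounded, globally Lipschitz function whose Hessian degenerates along a fixed direction must be constant, using a blow-down and the structure of the cone.

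\emph{Step 1 (blow-down).} Set $u_R(z)=R^{-1}u(Rz)$. Each $u_R$ is again a $\Gamma$-solution, since $\Gamma$ is a cone and viscosity conditions are invariant under this rescaling. The $u_R$ are uniformly Lipschitz with constant $C$, and $|u_R|\le C/R\to0$. Applied directly, this only yields the trivial limit $0$, so I will instead use the Lipschitz bound through a difference-quotient argument on $u$ itself.

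\emph{Step 2 (key structural fact).} Since $\Gamma\ne\R^n$ is an open convex symmetric cone with $\Gamma\subset\Gamma_1$, there is $\delta>0$ such that every $\lambda\in\overline\Gamma$ satisfies $\lambda_i\ge-(1-\delta)^{-1}\cdots$. More usefully, $\overline\Gamma\subset\{\lambda:\sum_i\lambda_i\ge0\}$, and $\Gamma$ contains $\Gamma_n$ together with a neighborhood of the ray spanned by $(1,\dots,1)$. Consequently:
\begin{itemize}
\item the subsolution condition makes $u$ subharmonic;
\item the supersolution condition says that any $C^2$ function $h$ touching $u$ from below has $\lambda(h_{i\bar j})\notin\Gamma$, and in particular $h_{i\bar j}$ is not positive definite.
\end{itemize}
Hence $-u$ is ``not strictly subharmonic from below''. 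More precisely, $u$ is a viscosity solution of the degenerate equation $\lambda_{\min}(D^2_{\C}u)\le 0$ in the sense of touching from below.

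\emph{Step 3 (sup/inf argument).} Let $s=\sup u$ and take $z_k$ with $u(z_k)\to s$. Translate to obtain $v_k(z)=u(z+z_k)$. By Arzel\`a--Ascoli, a subsequence converges locally uniformly to a Lipschitz $\Gamma$-solution $v$ with $v(0)=s=\sup v$. Since $v$ is subharmonic and attains its maximum at an interior point, the strong maximum principle gives $v\equiv s$. Repeating this with $\inf u$ gives a limit $w\equiv \inf u$. This alone does not force $\sup u=\inf u$, so a comparison step is needed.

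\emph{Step 4 (comparison, the main obstacle).} Suppose $\osc u>0$. I would use the function
\[
\phi(z)=u(z)-\epsilon\sqrt{1+|z|^2},
\]
which attains a maximum, and compare it against quadratic test functions. Combining the supersolution property at a minimum point of $u+\epsilon|z-z_0|^2$ with the subsolution property at the maximum point of $\phi$, I expect to obtain the following. Using the Lipschitz bound to control how far the touching points move, the doubling-variables estimate (as in Ishii's lemma) produces matrices $X\le Y$ with $\lambda(X)\in\overline\Gamma$ and $\lambda(Y+\epsilon' I)\notin\Gamma$, up to errors $O(\epsilon)$. The cone monotonicity \eqref{eq:spectral-cone-monotonicity} then gives a contradiction, provided the strictly positive perturbation $\epsilon' I$ can be extracted from the difference of the paraboloids, which requires quadratic growth at scale $R$. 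This is exactly where the bounds $|u|<C$ and $\mathrm{Lip}\le C$ must enter: the oscillation over balls of radius $R$ is at most $2C$, which is sublinear relative to the quadratic penalty once $R\to\infty$ with $\epsilon R^2\sim$ const. I regard this doubling-variables step as the hardest. If it fails, I would fall back on Székelyhidi's original proof, which shows that the blow-down limits are $\Gamma$-solutions that depend on fewer variables and then inducts on the dimension.
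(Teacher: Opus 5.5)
The paper gives no proof of this lemma; it quotes it verbatim from \cite[Theorem~20]{S}, so your opening citation is all the paper relies on. The self-contained argument you sketch after that is not a proof, and it fails in two places.

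First, in Step~3 the passage to $\inf u$ is not a repetition of the $\sup u$ argument. At the supremum you use that $\Gamma$-subsolutions are subharmonic and so obey the strong maximum principle. For $\Gamma\neq\Gamma_1$, however, the supersolution property is much weaker than superharmonicity, and no strong minimum principle holds. For example, $|z_1|^2$ has complex-Hessian spectrum $(1,0,\dots,0)\in\partial\Gamma_k$ for $2\le k\le n$. It is therefore a $\Gamma_k$-solution on $\C^n$ that attains its minimum at the origin without being constant. It is unbounded, so it does not contradict the lemma, but it shows that concluding $w\equiv\inf u$ for your translated limit $w$ already presupposes the Liouville theorem itself.

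Second, Step~4, which must carry the entire content, cannot work as set up. Your coercive perturbations push in the harmless direction:
\begin{itemize}
\item the upper test function $\epsilon\sqrt{1+|z|^2}+c$ at a maximum of $u-\epsilon\sqrt{1+|z|^2}$ is convex, so its spectrum lies in $\Gamma_n\subset\Gamma$;
\item the lower test function $c-\epsilon|z-z_0|^2$ has spectrum $-\epsilon\mathbf{1}\notin\Gamma$.
\end{itemize}
Both viscosity inequalities therefore hold automatically. Doubling variables with $u$ in both roles then gives, via Ishii's lemma, only $X\le Y+O(\epsilon)$ with $\lambda(X)\in\overline\Gamma$ and $\lambda(Y)\notin\Gamma$. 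This is perfectly consistent, since both spectra may lie on $\partial\Gamma$, and the localization error has the wrong sign to help.

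The strict gap $\lambda(Y+\epsilon' I)\notin\Gamma$ you need cannot come from the supersolution property. Subjets are stable under decreasing $Y$, not increasing it. Producing $Y+\epsilon' I$ would mean touching $u$ from below by a test function carrying an extra $+\epsilon'|z-z_*|^2$, i.e.\ a local minimum of $u-\epsilon'|z-z_*|^2$. The supersolution property itself excludes such a minimum for every $\Gamma$-solution, constant ones included.

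More generally, comparing $u$ with itself or with its translates on balls $B_R$ only returns maximum-principle information. Nothing in your sketch turns $|u|<C$ and $\Lip(u)\le C$ into $\osc u=0$. The missing ingredient is a genuinely global argument that exploits the supersolution property, not just subharmonicity, together with the boundedness and Lipschitz hypotheses. That is what \cite[Theorem~20]{S} supplies, so your ``fallback'' to Sz\'ekelyhidi's proof is not a fallback but the whole argument. The inequality in Step~2 is also left unfinished, though it is not used afterwards.
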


\subsection{Property of f}

\begin{lemma}\label{lem:concavity-pairing}
Suppose that $(f,\Gamma)$ satisfies Assumption~\ref{ass1}. Then
$\sum_i \lambda_i f_i(\lambda)\leq f(\lambda)$ for every $\lambda\in\Gamma$.
\end{lemma}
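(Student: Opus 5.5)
The plan is to use concavity of $f$ along the ray through $\lambda$, together with the boundary behaviour of $f$ at the origin. Fix $\lambda\in\Gamma$. Since $\Gamma$ is an open convex cone, the segment $s\lambda$, $s\in(0,1]$, lies in $\Gamma$. Define $\phi(s)=f(s\lambda)$ on $(0,1]$. Then $\phi$ is concave and smooth, and
\[
\phi'(s)=\sum_i\lambda_i f_i(s\lambda).
\]

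Concavity gives, for $0<s<1$,
\[
\phi(s)\le \phi(1)+\phi'(1)(s-1),
\quad\text{that is,}\quad
\phi'(1)(1-s)\le f(\lambda)-f(s\lambda).
\]
Equivalently, we can use the tangent-plane inequality for concave $f$ at the point $\lambda$, evaluated at $\mu=s\lambda$:
\[
f(s\lambda)\le f(\lambda)+\sum_i f_i(\lambda)(s-1)\lambda_i.
\]

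Now let $s\to0^+$. Because $f\in C(\overline\Gamma)$, we have $f(s\lambda)\to f(0)$. The origin lies in $\partial\Gamma$: it is in the closure of $\Gamma$, and it is not in $\Gamma$ since $\Gamma\subset\Gamma_1$. Hence $f(0)=0$ by Assumption~\ref{ass1}(2). In the limit the inequality becomes
\[
0\le f(\lambda)-\sum_i\lambda_i f_i(\lambda),
\]
which is the claim.

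The only delicate point is justifying the tangent-plane inequality as $s\to0$, since concavity and differentiability hold only on the open set $\Gamma$. This is handled by applying it for each fixed $s>0$, where $s\lambda\in\Gamma$, and passing to the limit using only continuity of $f$ on $\overline\Gamma$ and the fact that $f(0)=0$. No homogeneity (Assumption~\ref{ass2}) is needed; under homogeneity equality would hold by Euler's identity.
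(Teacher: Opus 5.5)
Your proof is correct and is the paper's argument: the paper applies the concavity (tangent-plane) inequality at $\lambda$ against the origin to get $\sum_i\lambda_i f_i(\lambda)\le f(\lambda)-f(0)\le f(\lambda)$. Your version supplies the step the paper leaves implicit, since $0\notin\Gamma$: you apply the inequality at $s\lambda\in\Gamma$ and let $s\to0^+$, using continuity of $f$ on $\overline\Gamma$ and $f(0)=0$.
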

\begin{proof}
By concavity of $f$,
$\sum_i \lambda_i f_i(\lambda)\leq f(\lambda)-f(0)\leq f(\lambda)$.
\end{proof}

\begin{lemma}[Determinant domination]\label{lem:determinant-domination}
Under Assumptions~\ref{ass1} and~\ref{ass3}, for every
$\lambda\in\Gamma_n$,
\begin{equation}
 f(\lambda)\geq n\gamma_0^{1/n}
 (\lambda_1\cdots\lambda_n)^{1/n}.
 \label{eq:determinant-domination}
\end{equation}
\end{lemma}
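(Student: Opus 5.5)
The plan is to combine concavity of $f$ with the determinant lower bound of Assumption~\ref{ass3}, applied at a diagonal point, together with the arithmetic--geometric mean inequality. Fix $\lambda\in\Gamma_n$ and set $A=\operatorname{diag}(\lambda_1,\ldots,\lambda_n)$. Since $\Gamma_n\subset\Gamma$, we have $\lambda(A)\in\Gamma$.

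The first step identifies $\partial F/\partial A_{i\bar j}$ at a diagonal matrix. For a symmetric spectral function, the standard formula gives
\[
\frac{\partial F}{\partial A_{i\bar j}}(A)=\delta_{ij}f_i(\lambda).
\]
Assumption~\ref{ass3}, applied in the standard unitary frame, then yields
\[
\prod_i f_i(\lambda)\geq\gamma_0 .
\]

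The second step uses concavity, but at the point $\lambda$ rather than at $0$. Concavity gives $f(0)\le f(\lambda)+\sum_i f_i(\lambda)(0-\lambda_i)$, which is Lemma~\ref{lem:concavity-pairing} read in the other direction. That only bounds $f$ from below by $\sum\lambda_if_i$ when $f(0)\ge0$. Here $f(0)=0$, because $0\in\partial\Gamma$ and $f$ is continuous on $\overline\Gamma$. Hence
\[
f(\lambda)\ge\sum_i\lambda_if_i(\lambda).
\]
Combined with Lemma~\ref{lem:concavity-pairing}, this is actually an equality, but only the lower bound is needed.

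The third step applies AM--GM to the positive numbers $\lambda_if_i(\lambda)$ (note $f_i>0$ and $\lambda_i>0$):
\[
\sum_i\lambda_if_i\ge n\Bigl(\prod_i\lambda_i\prod_if_i\Bigr)^{1/n}\ge n\gamma_0^{1/n}(\lambda_1\cdots\lambda_n)^{1/n}.
\]

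The only point requiring care is the first step, the derivative formula for spectral functions. At diagonal $A$ with distinct eigenvalues it is classical. With repeated eigenvalues, the off-diagonal terms still vanish by symmetry of $f$. To avoid this issue entirely, one can prove the inequality for $\lambda$ with distinct entries and extend to all of $\Gamma_n$ by continuity of both sides.
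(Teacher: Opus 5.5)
Your proof is correct and follows the paper's argument step for step: the diagonal linearization gives $\prod_i f_i(\lambda)\geq\gamma_0$, concavity together with $f(0)=0$ gives $f(\lambda)\geq\sum_i\lambda_i f_i(\lambda)$, and AM--GM finishes. One aside is wrong, though harmless because you only use the lower bound: Lemma~\ref{lem:concavity-pairing} states the same inequality $\sum_i\lambda_i f_i\leq f$, not its reverse, so you do not get equality. Equality would need degree-one homogeneity (Assumption~\ref{ass2}), which is not assumed here. For instance, $f=s+\log(1+s)$ with $s=\sum_i\lambda_i$ on $\Gamma_1$ satisfies Assumptions~\ref{ass1} and~\ref{ass3}, but $\sum_i\lambda_i f_i<f$.
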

\begin{proof}
At a diagonal matrix with eigenvalues $\lambda$, the eigenvalues of the
linearization are $f_i(\lambda)$.  Hence Assumption~\ref{ass3} gives
$\prod_i f_i(\lambda)\geq\gamma_0$.  The AM-GM inequality
then implies
\begin{align*}
 f(\lambda)
 &\geq \sum_i\lambda_i f_i(\lambda)+ f(0) \\
 & =  \sum_i\lambda_i f_i(\lambda) \\
 &\geq n\left(\prod_i\lambda_i f_i(\lambda)\right)^{1/n} \\
 &\geq n\gamma_0^{1/n}
 (\lambda_1\cdots\lambda_n)^{1/n},
\end{align*}
which is \eqref{eq:determinant-domination}.  
\end{proof}

\begin{lemma}\label{lem:rhs-growth}
Suppose that $(f,\Gamma)$ satisfies Assumptions~\ref{ass1}
and~\ref{ass2}.  There is a fixed constant $C$ such that every
$h\in C^2(M)$ with $\lambda[\chi+\ddc h]\in\Gamma$ satisfies
\begin{equation}
 f(\lambda[\chi+\ddc h])
 \leq C\bigl(1+\|\partial\bar\partial h\|_\infty\bigr).
 \label{eq:rhs-growth}
\end{equation}
\end{lemma}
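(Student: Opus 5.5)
The idea is to combine homogeneity with concavity, which make $f$ bounded above by a linear function along $\Gamma$, and then bound the eigenvalues of $\chi+\ddc h$ by the sup norm of its complex Hessian.

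\textbf{Step 1: a linear upper bound for $f$.} Fix the vector $e=(1,\dots,1)\in\Gamma_n\subset\Gamma$. By Assumption~\ref{ass2}, $f$ is homogeneous of degree one. Euler's identity then gives $\sum_i\mu_i f_i(\mu)=f(\mu)$ for $\mu\in\Gamma$. Concavity on the convex set $\Gamma$ gives, for every $\lambda\in\Gamma$,
\[
 f(\lambda)\le f(e)+\sum_i f_i(e)(\lambda_i-1)=\sum_i f_i(e)\lambda_i ,
\]
where the equality uses Euler's identity at $\mu=e$. By symmetry of $f$, all $f_i(e)$ are equal, say to $c_0=f(e)/n>0$. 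Hence
\[
 f(\lambda)\le c_0\sum_i\lambda_i\qquad\text{for all }\lambda\in\Gamma .
\]
(Alternatively one can avoid symmetry and bound $\sum_i f_i(e)\lambda_i\le \max_i f_i(e)\,|\lambda|_1$. This cruder bound is also enough.)

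\textbf{Step 2: from eigenvalues to the complex Hessian.} Apply Step 1 with $\lambda=\lambda[\chi+\ddc h](x)$ at each point $x$. This gives
\[
 f(\lambda[\chi+\ddc h])\le c_0\,\tr_\omega(\chi+\ddc h)\le c_0\bigl(C_\chi+\tr_\omega\ddc h\bigr).
\]
By compactness of $M$ and smoothness of $g$, we have $|\tr_\omega\ddc h|\le C(n,\omega)\,\|\partial\bar\partial h\|_\infty$. Here $\|\partial\bar\partial h\|_\infty$ is measured in the norm induced by $\omega$ (or in fixed coordinate charts). Combining the two bounds gives \eqref{eq:rhs-growth}, with $C$ depending only on $f(e)$, $n$, $\omega$ and $\chi$, and not on $h$.

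\textbf{Main obstacle.} The only delicate point is justifying the tangent-plane inequality for $f$ at the interior point $e$. This needs $f$ to be differentiable and concave on the open convex cone $\Gamma$, which Assumption~\ref{ass1} provides, and it needs Euler's identity, which is where Assumption~\ref{ass2} enters. Without homogeneity the intercept term $f(e)-\sum_i f_i(e)$ is merely a constant. In that case the same argument still gives $f(\lambda)\le C(1+|\lambda|)$, so the lemma holds even then. I would therefore write the proof through the general tangent inequality at $e$, and note that homogeneity only simplifies the constant.
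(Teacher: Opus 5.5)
Your proof is correct. It differs from the paper's only in how the spectrum is reduced to a scalar.

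The paper's route has three steps:
\begin{enumerate}
\item It bounds $g^{-1}(\chi+\ddc h)$ above by $T\,\mathrm{Id}$, with $T=C_0(1+\|\partial\bar\partial h\|_\infty)$.
\item It uses ellipticity to get $f(\lambda)\le f(T\one)$. Since $f_i>0$, $f$ is monotone under componentwise increase of the eigenvalues.
\item It evaluates $f(T\one)=f(\one)+\sum_i f_i(\one)(T-1)=Tf(\one)$, using homogeneity and Euler's identity.
\end{enumerate}

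You instead apply the supporting-hyperplane inequality at $e=\one$ directly to $\lambda$ itself. This removes both the matrix comparison and the monotonicity step. Symmetry, or your cruder $\ell^1$ variant, then turns the tangent plane into a bound by the trace.

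What this buys you is the slightly sharper estimate
\[
f(\lambda[\chi+\ddc h])\le \frac{f(e)}{n}\bigl(\tr_\omega\chi+\Delta^{\Ch}_\omega h\bigr).
\]
This involves only the Chern Laplacian of $h$, not the full complex Hessian. The paper's version instead runs through ellipticity and the diagonal scaling of $f$. Both arguments are equally short.

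Your remark that homogeneity is inessential is right, and it applies to the paper's argument as well. Without homogeneity, concavity still gives $f(T\one)\le f(\one)+\sum_i f_i(\one)(T-1)$, which is $O(T)$.
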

\begin{proof}
For a fixed $C_0$ depending only on $\omega$ and $\chi$, the Hermitian
matrix of $\chi+\ddc h$ is bounded above by
$T\omega$, where
$T=C_0(1+\|\partial\bar\partial h\|_\infty)$.  By ellipticity and
concavity,
\[
 f(\lambda[\chi+\ddc h])\leq f(T\one)= f(\one) + \sum_i f_i(\one)(T-1) \le C_1 T,
\]
which proves the claim.
\end{proof}

\begin{lemma}\label{lem:chengxu-pairing}  \cite[Lemma~4.7]{ChengXu2025}
Let $A,B$ be Hermitian matrices whose spectra belong to $\Gamma$.  Then
\begin{equation}
 \operatorname{tr}\bigl(DF(A)B\bigr)\geq0.
 \label{eq:matrix-pairing}
\end{equation}
\end{lemma}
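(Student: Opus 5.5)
The plan is to derive \eqref{eq:matrix-pairing} from concavity of $F$ on the convex matrix cone $\mathcal C_\Gamma$, together with the positivity of $F$ there. This uses Assumption~\ref{ass1} only; homogeneity is not needed.

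First, we record that the spectral function $F(A)=f(\lambda(A))$ is concave on $\mathcal C_\Gamma$. Since $f$ is symmetric and concave on the symmetric convex cone $\Gamma$, this is the classical Davis--Lewis theorem on concavity of spectral functions. We take it as standard, in the same way the paper already uses spectral convexity of $\mathcal C_\Gamma$. Since $F$ is also differentiable on the open set $\mathcal C_\Gamma$, concavity gives the supporting-hyperplane inequality
\[
 F(A')\leq F(A)+\operatorname{tr}\bigl(DF(A)(A'-A)\bigr)
 \qquad\text{for all } A,A'\in\mathcal C_\Gamma .
\]

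Next, fix $A,B\in\mathcal C_\Gamma$ and $t>0$. Since $\mathcal C_\Gamma$ is a convex cone, $A+tB\in\mathcal C_\Gamma$. Applying the inequality with $A'=A+tB$ yields
\[
 0< F(A+tB)\leq F(A)+t\,\operatorname{tr}\bigl(DF(A)B\bigr),
\]
where the left inequality uses $f>0$ on $\Gamma$ from Assumption~\ref{ass1}. Hence $\operatorname{tr}(DF(A)B)> -F(A)/t$ for every $t>0$, and letting $t\to\infty$ gives $\operatorname{tr}(DF(A)B)\geq0$.

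The only nonroutine step is the concavity of the spectral function $F$. If one prefers not to cite Davis--Lewis, there is a direct route. Diagonalize $A$ in a unitary frame. Then $DF(A)$ is diagonal with entries $f_i(\lambda(A))$, so $\operatorname{tr}(DF(A)B)=\sum_i f_i(\lambda(A))B_{i\bar i}$. By Schur--Horn, the diagonal $(B_{i\bar i})$ lies in the convex hull of the permutations of $\lambda(B)$, and this convex hull is contained in $\Gamma$ by symmetry and convexity of $\Gamma$. The claim then reduces to the vector statement $\sum_i f_i(\lambda)\mu_i\geq0$ for $\lambda,\mu\in\Gamma$. That statement follows from the same argument as above, applied to the concave function $f$ on $\Gamma$ in place of $F$.
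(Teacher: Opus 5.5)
Your argument is correct. The paper gives no proof of this lemma; it simply imports it from Lemma~4.7 of Cheng--Xu, so your proposal supplies a self-contained justification rather than an alternative to one in the text.

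The mechanism you use is the same trick the paper uses in Lemma~\ref{lem:concavity-pairing}. You evaluate the supporting-hyperplane inequality for the concave function along the ray $A+tB$, then use $f>0$ on $\Gamma$ and let $t\to\infty$.

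Of your two routes, the Schur-majorization one is the more economical. It needs neither the Davis--Lewis concavity of spectral functions nor convexity of $\mathcal C_\Gamma$. It uses only two facts: the diagonal of $B$ in an eigenframe of $A$ lies in the convex hull of permutations of $\lambda(B)$, and that hull stays inside the symmetric convex cone $\Gamma$.

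Two further points are worth recording. First, your proof uses only Assumption~\ref{ass1}. This matters because the lemma is also invoked in the stability section for operators that are not assumed homogeneous. Second, $B\mapsto\operatorname{tr}(DF(A)B)$ is linear, so your conclusion extends by continuity to $\lambda(B)\in\overline\Gamma$. That is the form the paper actually needs for the rank-one term $R_j^+\geq0$ in the stability argument, although there it also follows directly from positive definiteness of $DF(A)$.
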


\begin{lemma}\label{lem:cone-translation}
Suppose that $(f,\Gamma)$ satisfies Assumptions~\ref{ass1}
and~\ref{ass2}.  For every $\sigma>0$ there is $N>0$ such that
\begin{equation}
 \Gamma+N\one\subset\Gamma^\sigma
 :=\{\lambda\in\Gamma:f(\lambda)>\sigma\}.
 \label{eq:cone-translation}
\end{equation}
\end{lemma}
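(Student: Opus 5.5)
The plan is to use concavity and homogeneity of $f$ together with the positivity of the derivatives $f_i$. Fix $\sigma>0$ and $\lambda\in\Gamma$. Since $\Gamma$ is a convex cone containing $\Gamma_n$, we have $\Gamma+\Gamma_n\subset\Gamma$. Hence $\lambda+N\one\in\Gamma$ for every $N>0$. It therefore suffices to show $f(\lambda+N\one)>\sigma$ for a suitable $N$ independent of $\lambda$.

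The key step is a superadditivity inequality for $f$ on $\Gamma$. By Assumption~\ref{ass2}, $f$ is homogeneous of degree one, and by Assumption~\ref{ass1} it is concave. Together these give, for $a,b\in\Gamma$,
\[
 f(a+b)=2f\bigl(\tfrac{a+b}{2}\bigr)\geq f(a)+f(b).
\]
Apply this with $a=\lambda$ and $b=N\one$. Since $f>0$ on $\Gamma$, we get
\[
 f(\lambda+N\one)\geq f(\lambda)+Nf(\one)>Nf(\one).
\]
Here $f(\one)>0$ because $\one\in\Gamma_n\subset\Gamma$. Choosing $N=\sigma/f(\one)$ then gives $f(\lambda+N\one)>\sigma$, which is \eqref{eq:cone-translation}.

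An alternative route avoids superadditivity: use the concavity inequality $f(\lambda+N\one)\geq \ldots$ via tangent planes, or the monotonicity $f_i>0$ along the segment from $N\one$ to $N\one+\lambda$. That version is only clean when $\lambda\in\Gamma_n$, so the superadditivity route is preferable.

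The only delicate point is whether $f$ extends continuously to $\partial\Gamma$ with $f=0$ there. We use it only at interior points, so no issue arises. I expect no real obstacle: the argument is two lines once one notices that homogeneity plus concavity gives superadditivity.
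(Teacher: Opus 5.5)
Your proof is correct and is essentially the paper's argument: both derive superadditivity from concavity and degree-one homogeneity, then apply it to $\lambda$ and $N\one$. The only cosmetic difference is that you take $Nf(\one)=\sigma$ and get strict inequality from $f(\lambda)>0$, whereas the paper chooses $N$ with $Nf(\one)>\sigma$.
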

\begin{proof}
Choose $N$ so that $Nf(\one)>\sigma$.  Concavity and degree-one
homogeneity imply superadditivity on $\Gamma$:
$f(x+y)\geq f(x)+f(y)$.  Hence, for $\lambda\in\Gamma$,
$f(\lambda+N\one)\geq f(\lambda)+Nf(\one)>\sigma$.
\end{proof}

\begin{defn}
Suppose, as in the introduction, that $(M,\alpha)$ is Hermitian and
$\chi$ is a real $(1,1)$-form. We say that $\underline{u}$ is a
$\mathcal{C}$-subsolution for the equation
\[
F(A)=h
\]
if, at each $x\in M$, the set
\begin{equation}
\left(
\lambda\left[
\alpha^{j\bar{p}}
\bigl(\chi_{i\bar{p}}+\underline{u}_{i\bar{p}}\bigr)
\right]
+\Gamma_n
\right)
\cap\partial\Gamma^{h(x)}
\end{equation}
is bounded. Let us also say that $\underline{u}$ is admissible if
\[
\lambda\left[
\alpha^{j\bar{p}}
\bigl(\chi_{i\bar{p}}+\underline{u}_{i\bar{p}}\bigr)
\right]\in\Gamma.
\]
\end{defn}

\section{Chern heat flow}

The first regularization step uses the Chern heat flow.  Consider
the complex, or Chern, Laplacian
\[
 \Delta^{\mathrm{Ch}}_\omega q
 =\operatorname{tr}_\omega(\ddbar q)
 =g^{j\bar k}q_{j\bar k}.
\]
Let
\[
 H_t=e^{t\Delta_{\omega}^{\mathrm{Ch}}},
 \qquad
 H_tu(x)=\int_MK_t(x,y)u(y)\,dV_{g_{\mathbb R}}(y).
\]
This is the Chern heat flow, characterized by
\begin{equation}\label{eq:chern-diffusion}
 \partial_tH_tq=\Delta^{\mathrm{Ch}}_\omega H_tq,
 \qquad H_t q |_{t=0}=q.
\end{equation}

The maximum principle gives
\begin{equation}\label{eq:markov}
 K_t(x,y)\geq0,
 \qquad
 \int_MK_t(x,y)\,dV_{g_{\mathbb R}}(y)=1.
\end{equation}

\begin{lemma}\label{prop:kernel}
Let $(M,\omega)$ be a compact Hermitian manifold of complex dimension
$n$.  For $k=0,1,2$ there are constants $c>0$ and $C_k<\infty$, depending only
on the fixed smooth Hermitian background, such that
\begin{equation}\label{eq:gaussian-derivatives1}
 |\nabla_x^kK_t(x,y)|
 \leq C_k t^{-(2n+k)/2}
 \exp\!\left(-\frac{d_{g_{\mathbb R}}(x,y)^2}{ct}\right),
 \qquad 0<t\leq1.
\end{equation}
\end{lemma}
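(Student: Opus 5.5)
The plan is to view $\Delta^{\mathrm{Ch}}_\omega$ as a real second-order elliptic operator with smooth coefficients and reduce the estimate to the standard parametrix/Levi construction for heat kernels. In real coordinates,
\[
 \Delta^{\mathrm{Ch}}_\omega q=\tfrac12\Delta_{g_{\mathbb R}}q+\langle V,\nabla q\rangle ,
\]
where $\Delta_{g_{\mathbb R}}$ is the Riemannian Laplacian of the underlying metric and $V$ is a smooth vector field. The field $V$ is built from the torsion of $\omega$, and it vanishes exactly when $\omega$ is balanced/Kähler-like. Thus $L:=\Delta^{\mathrm{Ch}}_\omega$ has the principal symbol $\frac12|\xi|^2_{g_{\mathbb R}}$, plus smooth lower-order terms. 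It is not self-adjoint with respect to $dV_{g_{\mathbb R}}$, so I would not invoke Li--Yau-type gradient bounds. Instead I would use the parametrix method, which only needs uniform ellipticity and smooth coefficients on a compact manifold.

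\textbf{Step 1 (parametrix).} Fix a cutoff $\psi(d(x,y))$ supported where $d<\mathrm{inj}/2$. Set
\[
 P_t(x,y)=\psi\,(2\pi t)^{-n}\exp\!\left(-\frac{d(x,y)^2}{2t}\right)\sum_{j=0}^{N}a_j(x,y)t^j,
\]
with the $a_j$ chosen by the usual transport equations for $\partial_t-L_x$. Then $P_t\to\delta_y$ as $t\to0$. The error $R_t:=(\partial_t-L_x)P_t$ satisfies $|\nabla_x^kR_t|\le Ct^{N-n-k/2}e^{-d^2/(ct)}$ for small $k$. If one prefers to avoid the transport equations, taking $N=0$ with $a_0=1$ gives $|R_t|\le Ct^{-n-1/2}e^{-d^2/ct}$. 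The reason is that the first-order term and the metric distortion cost only a factor $d/t+1\lesssim t^{-1/2}e^{\epsilon d^2/t}$, and this singularity is still integrable.

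\textbf{Step 2 (Volterra series).} Write $K=P+P\#\Phi$, where $\#$ denotes space-time convolution and $\Phi=\sum_{m\ge1}R^{\#m}$. The key computation is the Gaussian convolution lemma. Given two kernels bounded by $t^{-n-1+a}e^{-d^2/ct}$ and $t^{-n-1+b}e^{-d^2/ct}$, their convolution is bounded by $B(a,b)\,t^{-n-1+a+b}e^{-d^2/c't}$. This follows from the semigroup property of Euclidean Gaussians together with a comparison of $d$ with Euclidean distance in charts. Summing the series gives $|\Phi_t|\le Ct^{-n-1/2}e^{-d^2/ct}$, so the series converges for $t\le1$. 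Uniqueness of bounded solutions, which follows from the maximum principle already used for \eqref{eq:markov}, identifies this construction with $K_t$. This yields the case $k=0$.

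\textbf{Step 3 (derivatives), the main obstacle.} Differentiating $P\#\Phi$ twice in $x$ directly produces the nonintegrable singularity $(t-s)^{-1}$. For $k=1$ this is harmless: one gets $(t-s)^{-1/2}$, and the convolution lemma gives $t^{-(2n+1)/2}$. For $k=2$ I would first show that $\Phi$ is Hölder in $y$-type variables. Concretely, I would prove $|\Phi_s(z,y)-\Phi_s(z',y)|\le C d(z,z')^{\alpha}s^{-n-1/2-\alpha/2}e^{-d^2/cs}$, which follows from the same series estimates. Then I would use the standard cancellation
\[
 \int\nabla^2_xP_{t-s}(x,z)\,dz=O\bigl((t-s)^{-1/2}\bigr).
\]
Splitting $\Phi_s(z,y)=\Phi_s(x,y)+[\Phi_s(z,y)-\Phi_s(x,y)]$ turns the singularity into $(t-s)^{-1+\alpha/2}$, and the Gaussian bound $t^{-(2n+2)/2}e^{-d^2/ct}$ follows.

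An alternative route avoids this step entirely. For $t\le1$, apply interior parabolic Schauder estimates to $K(\cdot,y)$ on the parabolic cylinder $B(x,\sqrt t)\times[t/2,t]$. Each spatial derivative costs $t^{-1/2}$ times the sup of $K$ on a slightly larger cylinder, and by Step 2 that sup is bounded by $Ct^{-n}e^{-d(x,y)^2/(c't)}$ once $d(x,y)\ge 2\sqrt t$. On the remaining region $d\le2\sqrt t$ the exponential factor is comparable to $1$. I would likely present this scaling argument, since it is shorter and gives all $k\le2$ simultaneously.
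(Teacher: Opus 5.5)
Your proposal is correct in outline, but it is organized differently from the paper. The paper constructs nothing on $M$ itself. It quotes the Euclidean fundamental-solution estimates of Porper--Eidel'man, which already give the bounds on $\partial_x^k Z$ for $k\le 2$ and allow first- and zeroth-order terms. It transplants these through a finite atlas using compactness, appeals to the semigroup identity for the off-diagonal region, and converts coordinate derivatives into covariant ones.

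You instead run the Levi parametrix globally on $M$ with the Riemannian distance. The off-diagonal Gaussian decay then comes directly out of the Volterra series, and the maximum principle identifies the construction with $K_t$. In the version you say you would present, you get $k=1,2$ by parabolic rescaling and interior Schauder estimates, which reduces all derivative bounds to the case $k=0$.

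Your route is more self-contained and makes the local-to-global step explicit; the paper leaves implicit how the chartwise Euclidean kernels are compared with the global $K_t$. The scaling argument also avoids the H\"older estimate for $\Phi$ and gives every $k$ at once, using the smoothness of the coefficients, which is available here. The paper's route buys brevity by delegating all of the analysis to the cited theorem.

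One detail in your Step~2 needs care. Justifying the convolution lemma by comparing $d$ with Euclidean distance in charts loses a bi-Lipschitz factor in $c$ at each use. A symmetric lemma of the form $c\mapsto c'$ would therefore degrade the Gaussian constant along the series, so your claimed bound $|\Phi_t|\le Ct^{-n-1/2}e^{-d^2/ct}$ does not follow as written.

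Apply the lemma asymmetrically instead. Write $R^{\#m}=R\#R^{\#(m-1)}$, bound $R$ with some constant $c_0$, and bound all iterates with a fixed $c_1>c_0$. Three ingredients then give the lemma directly on $M$, with a constant independent of $m$:
\begin{itemize}
\item the identity $\frac{a^2}{t-s}+\frac{b^2}{s}=\frac{(a+b)^2}{t}+\frac{(as-b(t-s))^2}{s(t-s)t}$, where $a=d(x,z)$, $b=d(z,y)$, and $a+b\geq d(x,y)$;
\item the spare factor $e^{-(c_0^{-1}-c_1^{-1})a^2/(t-s)}$, which handles the region $s\geq t/2$;
\item polynomial volume growth of balls, which handles $s\le t/2$ through a shell decomposition around $y$.
\end{itemize}
After that the series sums as you claim, with $c_1$ in place of your original $c$.
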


\begin{proof}
Equation \eqref{eq:chern-diffusion} lies in the class of uniformly parabolic
operators with bounded H\"older coefficients and bounded lower-order
coefficients.  The general fundamental-solution theorem of
Porper--Eidel'man \cite[Theorems~1.1--1.2]{PorperEidelman1984} gives, in
Euclidean coordinates,
\[
 |\partial_x^k Z(t,x;0,y)|
 \leq C_k t^{-(2n+k)/2}
 \exp\!\left(-\frac{|x-y|^2}{ct}\right),
 \qquad k\leq2.
\]
Their model equation explicitly allows first-order and zeroth-order terms.
Applying the local
parametrix construction in a finite atlas, and using compactness to make the
ellipticity and coefficient bounds uniform, gives the estimate near the
diagonal on $M$.  The semigroup identity supplies the off-diagonal estimate.
Coordinate derivatives and covariant derivatives of order at most two
differ only by uniformly bounded lower-order terms on this finite atlas,
which proves \eqref{eq:gaussian-derivatives1}.
\end{proof}

Integrating \eqref{eq:gaussian-derivatives1} in geodesic polar coordinates
gives
\begin{lemma}
Let $(M,\omega)$ be a compact Hermitian manifold.  There is a constant
$C$, depending only on the fixed smooth Hermitian background, such that
\begin{equation}\label{eq:integrated-kernel}
 \sup_{x\in M}\int_M|\nabla_xK_t(x,y)|\,dV_{g_{\mathbb R}}(y)
 \leq Ct^{-1/2},
 \qquad
 \sup_{x\in M}\int_M|\nabla_x^2K_t(x,y)|\,dV_{g_{\mathbb R}}(y)
 \leq Ct^{-1},\,\,\, 0<t\le 1.
\end{equation}
\end{lemma}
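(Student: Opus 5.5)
The plan is to integrate the pointwise Gaussian bound \eqref{eq:gaussian-derivatives1} against the Riemannian volume, splitting $M$ into a small geodesic ball around $x$ and its complement. Fix $x\in M$ and $k\in\{1,2\}$. By Lemma~\ref{prop:kernel},
\[
\int_M|\nabla_x^kK_t(x,y)|\,dV(y)\le C_k t^{-(2n+k)/2}\int_M \exp\!\left(-\frac{d(x,y)^2}{ct}\right)dV(y).
\]
So it suffices to show that the remaining integral is bounded by $Ct^{n}$ uniformly in $x$ and in $0<t\le1$. Multiplying, we then get $C t^{-k/2}$, which is $t^{-1/2}$ for $k=1$ and $t^{-1}$ for $k=2$.

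To bound this integral, let $r_0>0$ be less than the injectivity radius of $(M,g_{\mathbb R})$, which is positive by compactness.

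\emph{Inside the ball $B(x,r_0)$.} Pass to geodesic polar coordinates. The Jacobian satisfies $J(r,\theta)\le C r^{2n-1}$ uniformly in $x$, by compactness and smoothness of the metric (the Jacobian is a continuous function on the compact unit-sphere bundle times $[0,r_0]$, divided by $r^{2n-1}$). This gives
\[
\int_{B(x,r_0)}e^{-d^2/(ct)}\,dV\le C\int_0^\infty e^{-r^2/(ct)}r^{2n-1}\,dr = C' t^{n},
\]
using the substitution $r=\sqrt t\,s$.

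\emph{Outside the ball.} On the complement the integrand is at most $e^{-r_0^2/(ct)}\le C_N t^{N}$ for any $N$, and $\Vol(M)<\infty$, so this piece is also $O(t^n)$ for $t\le1$.

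Combining the two pieces gives the uniform $Ct^n$ bound, and hence the lemma. The only point requiring care is the uniformity of the polar-coordinate volume comparison in $x$. I would handle it either by the compactness argument just described, or more simply by a Bishop–Günther-type comparison using a lower Ricci or sectional curvature bound on the compact manifold. No other obstacle is expected.
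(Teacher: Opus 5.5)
Your proposal is correct and follows the paper's approach: the paper obtains this lemma in one line by integrating the pointwise Gaussian bound \eqref{eq:gaussian-derivatives1} in geodesic polar coordinates. Your split into a ball of radius below the injectivity radius, with the uniform Jacobian bound $J\le Cr^{2n-1}$, plus an exponentially small remainder on the complement, supplies exactly the details the paper leaves implicit.
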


\begin{lemma}\label{prop:smoothing}
\begin{enumerate}
\item For every $u\in L^\infty(M)$ and $0<t\leq1$,
\begin{equation}\label{eq:smoothing}
 \|\nabla H_tu\|_{L^\infty}
 \leq Ct^{-1/2}\osc_M u,
 \qquad
 \|\nabla^2H_tu\|_{L^\infty}
 \leq Ct^{-1}\osc_M u.
\end{equation}
Consequently,
\begin{equation}\label{eq:complex-hessian}
 \|\partial\bar\partial H_tu\|_{L^\infty}
 \leq Ct^{-1}\osc_M u.
\end{equation}
\item For every $u\in C^{\alpha}(M)$ and $0<t\leq1$, the following
estimates hold:
\[
 \|\nabla H_tu\|_{\infty}
 \leq Ct^{-\frac{1-\alpha}{2}}[u]_{C^{\alpha}},
 \qquad
 \|\nabla^2H_tu\|_{\infty}
 \leq Ct^{-\frac{2-\alpha}{2}}[u]_{C^{\alpha}}.
\]
\end{enumerate}
\end{lemma}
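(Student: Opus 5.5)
Both parts follow from the kernel representation $H_tu(x)=\int_M K_t(x,y)u(y)\,dV_{g_{\mathbb R}}(y)$ together with the integrated kernel bounds \eqref{eq:integrated-kernel}. The idea is to exploit the normalization in \eqref{eq:markov}: differentiating $\int_M K_t(x,y)\,dV(y)=1$ in $x$ gives $\int_M\nabla_x^kK_t(x,y)\,dV(y)=0$ for $k=1,2$. This lets us subtract a constant from $u$ before estimating.

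\emph{Part (1).} Fix $x$ and set $c=\tfrac12(\sup_M u+\inf_M u)$. Then
\[
\nabla^k H_tu(x)=\int_M\nabla_x^kK_t(x,y)\,(u(y)-c)\,dV(y),
\]
and $|u-c|\le\tfrac12\osc_M u$. Hence $|\nabla^kH_tu(x)|\le\tfrac12\osc_M u\int_M|\nabla^k_xK_t(x,y)|\,dV(y)$, and \eqref{eq:integrated-kernel} gives the factors $Ct^{-1/2}$ and $Ct^{-1}$. Differentiating under the integral is legitimate for $t>0$ by the Gaussian bounds of Lemma~\ref{prop:kernel}.

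The estimate \eqref{eq:complex-hessian} is then immediate. In local coordinates, $\partial\bar\partial$ is a component of the real Hessian; passing from the covariant Hessian to coordinate second derivatives only adds Christoffel terms times $\nabla H_tu$. These lower-order terms are bounded by $Ct^{-1/2}\osc u\le Ct^{-1}\osc u$ for $t\le1$.

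\emph{Part (2).} Here we take $c=u(x)$ instead, so that
\[
|\nabla^kH_tu(x)|\le[u]_{C^\alpha}\int_M|\nabla_x^kK_t(x,y)|\,d(x,y)^\alpha\,dV(y).
\]
We then plug in the pointwise bound \eqref{eq:gaussian-derivatives1} and integrate in geodesic polar coordinates around $x$; the volume of a geodesic ball of radius $r$ is comparable to $r^{2n}$ for small $r$, and $M$ is compact. The substitution $r=\sqrt t\,s$ gives
\[
\int_M t^{-(2n+k)/2}e^{-d^2/(ct)}d^\alpha\,dV\le Ct^{-(2n+k)/2}t^{(2n+\alpha)/2}\int_0^\infty s^{2n-1+\alpha}e^{-s^2/c}\,ds=Ct^{-(k-\alpha)/2}.
\]
Taking $k=1,2$ gives exponents $-(1-\alpha)/2$ and $-(2-\alpha)/2$, as claimed. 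Points at distance beyond the injectivity radius contribute only $O(e^{-c'/t})$ and are absorbed.

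\emph{Main obstacle.} The only delicate point is justifying the cancellation $\int\nabla_x^kK_t\,dV(y)=0$. Differentiating under the integral is justified by the uniform integrability from Lemma~\ref{prop:kernel}. One must also note that $\int_M K_t(x,y)\,dV(y)=1$ holds in the $y$-variable: $H_t1=1$ because constants are annihilated by the Chern Laplacian, with no adjoint issues since the integration is in $y$ against the kernel representing $H_t$ acting on functions. The rest is routine.
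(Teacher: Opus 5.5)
Your proposal is correct and follows essentially the same route as the paper. It uses the cancellation $\int_M\nabla_x^kK_t(x,y)\,dV_{g_{\mathbb R}}(y)=0$ coming from the Markov property. For part (1) it subtracts the midrange constant and applies the integrated kernel bounds \eqref{eq:integrated-kernel}, controlling the complex Hessian by the covariant Hessian plus a gradient term. For part (2) it subtracts $u(x)$ and integrates the pointwise Gaussian bound. Your extra justifications (differentiation under the integral, the negligible contribution away from the diagonal) only make explicit details the paper leaves implicit.
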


\begin{proof}
Since $\Delta_\omega^{\mathrm{Ch}}1=0$, equation \eqref{eq:markov} implies
\begin{equation}\label{e zero intk}
 \int_M\nabla_x^kK_t(x,y)\,dV_{g_{\mathbb R}}(y)=0,
 \qquad k=1,2.
\end{equation}
First, consider the case $u\in L^{\infty}(M)$.
Choose $c=(\sup_M u+\inf_M u)/2$.  Then
\[
 \|u-c\|_\infty\leq\frac12\osc_M u.
\]
Differentiating the kernel representation and applying
\eqref{eq:integrated-kernel} proves both inequalities in
\eqref{eq:smoothing}.  On a fixed Hermitian manifold the complex Hessian is
bounded by the real covariant Hessian plus a fixed multiple of the gradient.
For $0<t\leq1$, the latter $t^{-1/2}$ term is absorbed by $t^{-1}$, proving
\eqref{eq:complex-hessian}.

Second, suppose that $u\in C^{\alpha}(M)$. By
\eqref{e zero intk}, for $j=1,2$,
\[
 \nabla^jH_tu(x)
 =\int_M\nabla_x^jK_t(x,y)\bigl(u(y)-u(x)\bigr)
   \,dV_{g_{\mathbb R}}(y).
\]
Using
$|u(y)-u(x)|\leq [u]_{C^{\alpha}}d(x,y)^{\alpha}$, we obtain
\[
\begin{split}
 |\nabla^jH_tu(x)|
 &\leq C[u]_{C^{\alpha}}t^{-(2n+j)/2}
 \int_M e^{-\frac{d^2(x,y)}{Ct}}d(x,y)^{\alpha}
 \,dV_{g_{\mathbb R}}(y) \\
 &\leq C[u]_{C^{\alpha}}
 t^{-\frac{2n+j}{2}}t^{\frac{2n+\alpha}{2}} \\
 &=C[u]_{C^{\alpha}}t^{\frac{\alpha-j}{2}}.
\end{split}
\]
\end{proof}

\begin{proposition}
\label{prop:heat-envelope}
Let $u\in L^{\infty}(M)$ be
$(\Gamma,\chi)$-admissible.  Put
\[
 C_\chi:=\sup_M\operatorname{tr}_\omega\chi,
\]
and, for $t>0$, define
\begin{equation}\label{eq:heat-envelope}
 h_t:=H_tu+C_\chi t.
\end{equation}
Then
\begin{equation}\label{eq:sandwich}
 u \leq h_t \le \max_M u + C_{\chi} t,
\end{equation}
and
\begin{equation}\label{eq:L1-rate}
\|h_t-u\|_{L^1(dV_\omega)}
 \leq C_{\omega,\chi}t.
\end{equation}
Moreover,
\begin{equation}\label{eq:smoothing1}
 \|\nabla h_t\|_{L^\infty}
 \leq Ct^{-1/2}\osc_M u,
 \qquad
 \|\nabla^2h_t\|_{L^\infty}
 \leq Ct^{-1}\osc_M u.
\end{equation}
\end{proposition}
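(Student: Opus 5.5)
The key estimate is the lower bound $u\le h_t$; I would derive it from a parabolic comparison for the Chern heat flow. The remaining assertions follow directly from the Markov property \eqref{eq:markov} and Lemma~\ref{prop:smoothing}.

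\emph{Upper bound and derivative bounds.} By \eqref{eq:markov}, $H_tu\le\max_M u$, hence $h_t\le \max_M u+C_\chi t$. The derivative bounds \eqref{eq:smoothing1} are exactly \eqref{eq:smoothing}, since $C_\chi t$ is spatially constant.

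\emph{Lower bound $u\le h_t$.} Admissibility gives $\lambda[\chi+\ddc u]\in\overline\Gamma\subset\overline{\Gamma_1}$ in the viscosity sense. Taking traces, $\operatorname{tr}_\omega\chi+\Delta^{\Ch}_\omega u\ge0$, so $\Delta^{\Ch}_\omega u\ge -C_\chi$ in the viscosity sense. For the Chern Laplacian, viscosity and distributional subsolutions agree, because it is a linear elliptic operator with smooth coefficients and no zeroth-order term. Thus $v:=u+C_\chi t$, viewed as a time-independent function, is a subsolution of $\partial_t v-\Delta^{\Ch}_\omega v\le 0$, while $H_tu+C_\chi t$ solves the heat equation with the same initial datum $u$.

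The parabolic comparison principle then gives $u+C_\chi t\le H_tu+C_\chi t$? That is stronger than needed and in fact wrong. The correct comparison is with $w(t):=H_tu$ directly. We have $\partial_t w=\Delta^{\Ch} w$ and, formally,
\[
\partial_t\bigl(H_tu-u\bigr)=H_t\Delta^{\Ch}u\ge -C_\chi,
\]
using positivity of $H_t$ (which preserves the inequality $\Delta^{\Ch}u\ge -C_\chi$). Integrating in time gives $H_tu-u\ge -C_\chi t$, which is the claimed lower bound.

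To make this rigorous for merely bounded $u$, I would pair against a nonnegative test function using the adjoint kernel. Writing $\Delta^{\Ch}u=\mu-\operatorname{tr}_\omega\chi$ with $\mu\ge0$ a measure, one has
\[
\frac{d}{dt}H_tu=H_t\Delta^{\Ch}u\ge -C_\chi
\]
in the distributional sense. The identity $H_tu(x)\to u(x)$ at the relevant points requires upper semicontinuity together with submean-type behavior of $\Delta^{\Ch}$-quasi-subharmonic functions, and this only holds pointwise almost everywhere. I expect this to be the main obstacle. I would first try to prove $u\le h_t$ almost everywhere, which suffices for the use of $h_t$ as an obstacle after taking envelopes.

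Alternatively, I would approximate $u$ from above by smooth functions with $\Delta^{\Ch}u_j\ge -C_\chi-\epsilon_j$, run the smooth argument, and pass to the limit by monotone convergence.

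\emph{$L^1$ rate.} Since $h_t-u\ge0$, the $L^1$ norm is $\int_M(h_t-u)\,dV_\omega$, so it suffices to bound this integral. Differentiating in $t$ and integrating against the volume form gives
\[
\frac{d}{dt}\int_M H_tu\,dV_\omega=\int_M H_t(\Delta^{\Ch}u)\,dV_\omega.
\]
Because the Chern Laplacian is not self-adjoint, I would use the adjoint $(\Delta^{\Ch})^*$ applied to $H_t^*1$, which is a smooth bounded function with bounded derivatives. This reduces the task to bounding $\int_M\phi\,\Delta^{\Ch}u$ for smooth $\phi$ with uniformly controlled $C^2$ norm. That quantity equals $\int_M u\,(\Delta^{\Ch})^*\phi\le C\|u\|_\infty$, which yields $C t$ with a constant depending on $\|u\|_\infty$.

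To obtain a constant truly depending only on $(\omega,\chi)$, I would instead use the total mass of the positive measure $\mu=\Delta^{\Ch}u+\operatorname{tr}_\omega\chi$. Testing against a Gauduchon-type positive weight bounds this mass by a constant depending on $\omega$ and $\chi$ alone. Combining the two estimates gives \eqref{eq:L1-rate}.
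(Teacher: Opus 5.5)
\textbf{There is a genuine gap in the lower bound $u\le h_t$.} Your formal computation $\partial_t(H_tu-u)=H_t\Delta^{\Ch}_\omega u\ge -C_\chi$ is the same identity the paper uses. When you make it rigorous, however, you settle for $u\le h_t$ almost everywhere, on the grounds that $H_\varepsilon u\to u$ ``only holds pointwise almost everywhere.''

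That is weaker than the statement. Your claim that an a.e.\ inequality suffices for the envelope is also unjustified: $P_{\Gamma,\chi}(h_t)$ only admits competitors with $v\le h_t$ \emph{everywhere}. That is exactly what is needed to get $u\le P_{\Gamma,\chi}(h_t)$ in Theorem~\ref{thm:regularization}.

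The obstacle you name is also not real in the case the paper treats, which is continuous $u$ (the case used for Theorem~\ref{thm:main}). For continuous $u$, the Markov property \eqref{eq:markov} and the Gaussian kernel bound give $H_\varepsilon u\to u$ \emph{uniformly}. So letting $\varepsilon\downarrow0$ in
\[
H_tu-H_\varepsilon u+C_\chi(t-\varepsilon)=\int_\varepsilon^tH_s\mu\,ds\ge0,\qquad \mu=C_\chi+\Delta^{\Ch}_\omega u\ge0,
\]
yields $u\le h_t$ at every point; this is the paper's argument.

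Your abandoned comparison attempt would also have worked once the sign is fixed. The function $u(x)-C_\chi t$, not $u(x)+C_\chi t$, is a viscosity subsolution of $\partial_t-\Delta^{\Ch}_\omega$: its time derivative is $-C_\chi$ and its Chern Laplacian is $\ge -C_\chi$. Comparison with $H_tu$, which has the same continuous initial datum, then gives $u-C_\chi t\le H_tu$ directly.

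For a merely upper semicontinuous $u$, neither limit passage is automatic. The missing ingredient is the sub-mean-value property: a USC function with $\Delta^{\Ch}_\omega u\ge -C$ in the viscosity sense satisfies $u(x)=\operatorname*{ess\,lim\,sup}_{y\to x}u(y)$. This upgrades an a.e.\ inequality against the continuous function $h_t$ to an everywhere one.

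Your alternative, approximating $u$ from above by smooth $u_j$ with $\Delta^{\Ch}_\omega u_j\ge -C_\chi-\epsilon_j$, is not substantiated. You only use the trace inequality, so only $\Delta^{\Ch}_\omega u$ is known to be a measure. Convolution in charts therefore creates commutator terms with the variable coefficients $g^{j\bar k}$ that the trace bound does not control. The natural global smoother is $H_s$ itself, which makes the argument circular.

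The remaining parts are fine. The upper bound and derivative bounds match the paper. Your $L^1$ argument, which bounds $\|H_s^*1\|_\infty$ and the mass of $\mu$ via the Gauduchon density, also works. The paper instead observes that $\rho_\omega\,dV_\omega$ is $H_t$-invariant, which gives $\int_M(h_t-u)\rho_\omega\,dV_\omega=C_\chi t$ in one line.
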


\begin{proof}
Since $\Gamma\subset\Gamma_1$, every  $\phi$ touching $u$ from above  satisfies the following inequality at the touching set:
\[
 0\leq\sum_{i=1}^n\lambda_i(A_\phi)
 =\operatorname{tr}_\omega\chi+\Delta^{\mathrm{Ch}}_\omega\phi
 \leq C_\chi+\Delta^{\mathrm{Ch}}_\omega\phi.
\]
Thus
\begin{equation}\label{eq:trace-measure}
 \mu:=C_\chi+\Delta^{\mathrm{Ch}}_\omega u\geq0
\end{equation}
in the viscosity sense.  For this linear inequality, viscosity and
distributional subsolutions agree.  Hence $\mu$ is a nonnegative
distribution and therefore a nonnegative Radon measure.

First suppose that $u$ is smooth.  The semigroup identities give
\[
 \frac{d}{ds}H_su
 =\Delta^{\mathrm{Ch}}_\omega H_su
 =H_s\Delta^{\mathrm{Ch}}_\omega u,
 \qquad H_0u=u.
\]
Moreover, $\Delta^{\mathrm{Ch}}_\omega1=0$, so the Markov property gives
$H_sC_\chi=C_\chi$.  The fundamental theorem of calculus then yields
\begin{align}
 h_t-u
 &=H_tu-u+C_\chi t \notag\\
 &=\int_0^tH_s(\Delta^{\mathrm{Ch}}_\omega u)\,ds
   +\int_0^tH_s(C_\chi)\,ds \notag\\
 &=\int_0^tH_s(C_\chi+\Delta^{\mathrm{Ch}}_\omega u)\,ds \notag\\
 &=\int_0^tH_s\mu\,ds.                                  \label{eq:semigroup-identity}
\end{align}

For merely continuous admissible $u$, the same formula is understood as an
improper distributional identity.  The linear viscosity inequality in
\eqref{eq:trace-measure} is also distributional, and a positive distribution
is a positive Radon measure.  For every $0<\varepsilon<t$, semigroup
smoothing gives
\begin{equation}\label{eq:epsilon-semigroup}
 H_tu-H_\varepsilon u+C_\chi(t-\varepsilon)
 =\int_\varepsilon^tH_s\mu\,ds.
\end{equation}
The Chern heat kernel is positive, hence $H_s\mu\geq0$.  Therefore the
right-hand side of \eqref{eq:epsilon-semigroup} is nonnegative.  Since $u$
is continuous on the compact manifold, $H_\varepsilon u\to u$ uniformly as
$\varepsilon\downarrow0$.  Passing to the limit proves
\[
 h_t-u=\int_0^tH_s\mu\,ds\geq0,
\]
where the integral from zero denotes the preceding improper limit. By maximum principle, we have that $H_t u \le \max_M u$, which implies that:
\begin{equation*}
h_t \le \max_M u + C_{\chi}t.
\end{equation*}
Thus, we conclude the proof of (\ref{eq:sandwich}).

Finally, let $\rho_\omega>0$ be the normalized Gauduchon density
\cite{Gauduchon1977}, so that
\begin{equation}\label{eq:gauduchon-density}
 \ddbar(\rho_\omega\omega^{n-1})=0,
 \qquad
 \int_M\rho_\omega\,dV_\omega=1.
\end{equation}
Integration by parts gives, for every smooth $q$,
\[
 \int_M\Delta^{\mathrm{Ch}}_\omega q\,
       \rho_\omega\,dV_\omega=0.
\]
Consequently the possibly non-self-adjoint Chern heat semigroup preserves
the weighted measure:
\begin{equation}\label{eq:weighted-mass}
 \int_MH_tq\,\rho_\omega\,dV_\omega
 =\int_Mq\,\rho_\omega\,dV_\omega.
\end{equation}
Indeed, the derivative in $t$ of the left-hand side vanishes by the
preceding integration-by-parts identity.

Since the positive smooth function $\rho_\omega$ has a positive minimum,
\eqref{eq:weighted-mass} yields
\begin{align*}
 \|h_t-u\|_{L^1(dV_\omega)}
 &=\int_M(h_t-u)\,dV_\omega\\
 &\leq(\min_M\rho_\omega)^{-1}
       \int_M(h_t-u)\rho_\omega\,dV_\omega\\
 &=C_\chi(\min_M\rho_\omega)^{-1}t.
\end{align*}
Here the first equality uses \eqref{eq:sandwich}.  
Finally, \eqref{eq:smoothing1} follows from \eqref{eq:smoothing}.
\end{proof}

\begin{remark}
The function $h_t$ is smooth for every $t>0$, but it need not be
$(\Gamma,\chi)$-admissible.  The heat-flow argument uses only the scalar
inequality
$\operatorname{tr}_\omega\chi+\Delta^{\mathrm{Ch}}_\omega u\geq0$.
This
is why the second step uses an envelope.
\end{remark}

\section{The penalized obstacle equation}
In this section, we assume that $(f,\Gamma)$ satisfies
Assumptions~\ref{ass1} and~\ref{ass2}.
For \(h\in C^\infty(M)\) and large \(\beta\), consider
\begin{equation} \label{eq:penalized}
 \log f\bigl(\lambda[\chi+\ddc u_{\beta,h}]\bigr)
 =\beta(u_{\beta,h}-h),
 \qquad
 \lambda[\chi+\ddc u_{\beta,h}]\in\Gamma.
\end{equation}

We first record the $C^0$ estimate for \eqref{eq:penalized}.
\begin{proposition}\label{prop:penalized-c0}
There exists a constant $C_0$ depending on $f$, $\chi$, and $\omega$
such that
\begin{equation*}
\frac{-C_0}{\beta} + \min_M h \le \min_M u_{\beta,h}  \le \max_M u_{\beta,h} \le \max_M h + \frac{C_0}{\beta}.
\end{equation*}
\end{proposition}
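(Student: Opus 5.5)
The plan is to apply the maximum principle at the extremal points of $u_{\beta,h}-h$ and of $u_{\beta,h}$. We use two consequences of the structure: $f$ is bounded above on bounded sets of the closed cone, and $f$ is bounded below by a positive constant on a fixed compact subset of $\Gamma$. Write $u=u_{\beta,h}$.

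\emph{Upper bound.} Let $x_0$ be a maximum point of $u$. There $\ddbar u\le0$, so the Hermitian matrix $g^{-1}(\chi+\ddc u)(x_0)$ is bounded above by $g^{-1}\chi(x_0)$. Since $\lambda[\chi+\ddc u](x_0)\in\Gamma$, Weyl monotonicity and ellipticity ($f_i>0$) give
\[
 f(\lambda[\chi+\ddc u])(x_0)\le f(\lambda[\chi])(x_0)\le \max_M f(\lambda[\chi])=:e^{C_0}.
\]
Here the middle inequality uses that $\lambda(g^{-1}\chi)\in\Gamma$. One can also avoid this by bounding $g^{-1}\chi\le T\,\mathrm{Id}$ and using $f(T\one)$, as in Lemma~\ref{lem:rhs-growth}. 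The equation then gives $\beta(u-h)(x_0)\le C_0$, so $\max_M u=u(x_0)\le h(x_0)+C_0/\beta\le\max_M h+C_0/\beta$.

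\emph{Lower bound.} Let $y_0$ be a minimum point of $u$. There $\ddbar u\ge0$, so $g^{-1}(\chi+\ddc u)(y_0)\ge g^{-1}\chi(y_0)$ in the Hermitian order. By \eqref{eq:spectral-cone-monotonicity} and monotonicity of $f$ along $\overline{\Gamma_n}$ (a consequence of $f_i>0$ together with concavity in the matrix sense),
\[
 f(\lambda[\chi+\ddc u])(y_0)\ge f(\lambda[\chi])(y_0)\ge \min_M f(\lambda[\chi])=:e^{-C_0'}>0.
\]
The minimum is positive because $\lambda(g^{-1}\chi)$ ranges over a compact subset of $\Gamma$ on which $f$ is continuous and positive. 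Hence $\beta(u-h)(y_0)\ge -C_0'$, and so $\min_M u=u(y_0)\ge h(y_0)-C_0'/\beta\ge\min_M h-C_0'/\beta$. Taking $C_0=\max(C_0,C_0')$ gives the claim with a constant depending only on $f,\chi,\omega$.

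The one point needing care is the monotonicity $f(\lambda(A+B))\ge f(\lambda(A))$ for $B\ge0$ when $\lambda(A)\in\Gamma$. I would justify it by integrating along $A+sB$, $s\in[0,1]$, which stays in $\mathcal C_\Gamma$ by \eqref{eq:spectral-cone-monotonicity}. The derivative $\tr(DF(A+sB)B)$ is nonnegative because $DF$ is positive definite: in a diagonalizing frame its eigenvalues are the $f_i>0$. This step is the main obstacle, though it is routine; everything else is the maximum principle. Homogeneity (Assumption~\ref{ass2}) is not needed here.
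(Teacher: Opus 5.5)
Your argument is correct and uses the same idea as the paper: apply the maximum principle and compare with $f(\lambda(g^{-1}\chi))$, which is bounded above and below by positive constants on $M$. The one difference matters. The paper evaluates at extremal points of $u_{\beta,h}-h$, whereas you evaluate at extremal points of $u_{\beta,h}$ itself.

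At a maximum of $u_{\beta,h}-h$ one only knows $dd^c u_{\beta,h}\le dd^c h$. That yields $\beta(u_{\beta,h}-h)\le\log f(\lambda[\chi+dd^c h])$, which is Lemma~\ref{lem:penalized-rhs}, and not a bound by $\log f(\lambda(g^{-1}\chi))$. At a minimum of $u_{\beta,h}-h$, the form $\chi+dd^c h$ need not even be admissible.

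In fact, the two inequalities as the paper words them would give the pointwise bound $|u_{\beta,h}-h|\le C_0/\beta$. This cannot hold in general: when $\lambda[\chi+dd^c h]\notin\overline\Gamma$ somewhere, so that $P_{\Gamma,\chi}(h)\ne h$, it would force $u_{\beta,h}\to h$, contradicting $u_{\beta,h}\to P_{\Gamma,\chi}(h)$ from Proposition~\ref{prop:uniform-convergence}.

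At extremal points of $u_{\beta,h}$ you have $dd^c u_{\beta,h}\le 0$ (resp.\ $\ge 0$), so the comparison with $\chi$ is legitimate. It gives exactly the $\max/\min$ bounds in the statement, which is all that is claimed. Your justification of monotonicity, integrating along $A+sB$ inside $\mathcal C_\Gamma$ via \eqref{eq:spectral-cone-monotonicity} and using $DF>0$, is the right way to make ``ellipticity'' precise. That step uses only $f_i>0$, so your parenthetical appeal to concavity is unnecessary. You are also right that homogeneity plays no role.
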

\begin{proof}
Let $B=g^{-1}\chi$.  Since $\lambda(B(x))$ stays in a compact subset of
$\Gamma$, the function $\log f(\lambda(B(x)))$ is uniformly bounded.
At a maximum point $p_{\max}$ of $u_{\beta,h}-h$, one has
$\ddc u_{\beta,h}\leq dd^c h$, and ellipticity gives
\[
 \beta(u_{\beta,h}-h)(p_{\max})
 \leq\log f(\lambda(B(p_{\max})))\leq C_0.
\]
At a minimum point the inequalities reverse, and therefore
\[
 \beta(u_{\beta,h}-h)(p_{\min})
 \geq\log f(\lambda(B(p_{\min})))\geq-C_0.
\]
These two inequalities imply the asserted estimate.
\end{proof}

\begin{lemma}\label{lem:penalized-rhs}
There is a constant $C$ depending on $f$, $\omega$, and $\chi$ such that
\begin{equation*}
 e^{\beta(u_{\beta,h}-h)}
 \leq C\bigl(1+\|\partial\bar\partial h\|_{\omega}\bigr).
\end{equation*}
\end{lemma}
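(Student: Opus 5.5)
The plan is a maximum-principle argument at the maximum point of $u_{\beta,h}-h$, followed by a comparison of the right-hand side with its value there. Two ingredients are needed: ellipticity (monotonicity of $f$ along $\overline{\Gamma_n}$) and the linear growth bound of Lemma~\ref{lem:rhs-growth}. The subtlety is that $h$ itself need not be admissible, so Lemma~\ref{lem:rhs-growth} cannot be applied to $h$ directly.

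First, let $p$ be a maximum point of $u_{\beta,h}-h$. Since $e^{\beta(u_{\beta,h}-h)}$ is increasing in $u_{\beta,h}-h$, the supremum of the left-hand side over $M$ is attained at $p$. It therefore suffices to bound $f(\lambda[\chi+\ddc u_{\beta,h}])(p)$.

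At $p$ we have $\ddc u_{\beta,h}(p)\le \ddc h(p)$ as Hermitian forms. Set $K:=\|\partial\bar\partial h\|_\omega$. Then
\[
\chi+\ddc u_{\beta,h}\le \chi+\ddc h\le \chi+K\omega \quad\text{at }p.
\]
The form $\chi+K\omega$ has eigenvalues $\lambda(g^{-1}\chi)+K\one\in\Gamma$, by \eqref{eq:spectral-cone-monotonicity} and the admissibility of $\chi$. Weyl monotonicity then gives $\lambda(\chi+\ddc u_{\beta,h})\le\lambda(\chi+K\omega)$ componentwise. Since $f_i>0$ on $\Gamma$, which is convex, $f$ is nondecreasing along the segment joining the two eigenvalue vectors (the difference lies in $\overline{\Gamma_n}$). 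Hence
\[
f(\lambda[\chi+\ddc u_{\beta,h}])(p)\le f\bigl(\lambda(g^{-1}\chi)(p)+K\one\bigr).
\]

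It remains to bound the right-hand side by $C(1+K)$. Since $\chi\le C_0\omega$ for a fixed $C_0$, the argument is at most $(C_0+K)\one$ componentwise, and monotonicity gives $f\le f((C_0+K)\one)$. By degree-one homogeneity (Assumption~\ref{ass2}) this equals $(C_0+K)f(\one)$. If one prefers to avoid homogeneity, concavity gives $f(T\one)\le f(\one)+(T-1)\sum_i f_i(\one)$, exactly as in the proof of Lemma~\ref{lem:rhs-growth}. Either way,
\[
e^{\beta(u_{\beta,h}-h)}\le e^{\beta(u_{\beta,h}-h)(p)}\le C(1+\|\partial\bar\partial h\|_\omega),
\]
with $C$ depending only on $f$, $\omega$ and $\chi$.

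The main point of care is the comparison step. Lemma~\ref{lem:rhs-growth} requires $\lambda[\chi+\ddc h]\in\Gamma$, which is not available for $h$. Comparing instead with the admissible matrix $\chi+K\omega$, and using monotonicity of $f$ along $\Gamma_n$-directions inside the convex cone, avoids this assumption.
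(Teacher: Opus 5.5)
Your argument is correct and is essentially the paper's: take a maximum point of $u_{\beta,h}-h$, use $\ddc u_{\beta,h}\le\ddc h$ there together with ellipticity, and finish with the linear growth of $f$ along $\one$. The only difference is that the paper notes $\chi+\ddc h\ge\chi+\ddc u_{\beta,h}$ is already admissible at that point and applies Lemma~\ref{lem:rhs-growth} there pointwise, so the obstruction you flag does not actually arise; your comparison with $\chi+K\omega$ simply inlines the proof of that lemma.
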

\begin{proof}
Let $p_{\max}$ be a maximum point of $u_{\beta,h}-h$.  At this point,
$\ddc u_{\beta,h}\leq\ddc h$.  Hence 
$\lambda[\chi+\ddc h]\in \Gamma$  at $p_{\max}$ and, by ellipticity and
Lemma~\ref{lem:rhs-growth},
\[
 \sup_M e^{\beta(u_{\beta,h}-h)}
 =f(\lambda[\chi+\ddc u_{\beta,h}])(p_{\max})
 \leq f(\lambda[\chi+\ddc h])(p_{\max})
 \leq C\bigl(1+\|\partial\bar\partial h\|_\omega\bigr).
\]
\end{proof}

\begin{proposition}
\label{prop:auxiliary-estimates1}
Fix \(B\geq\osc_M h\) and define
\begin{equation}
 D_h=
 1+\|\nabla h\|_\infty^2+\|\nabla^2h\|_\infty.
 \label{eq:Dh}
\end{equation}
Uniformly for all sufficiently large \(\beta\),
\begin{equation}
 \|\partial\bar\partial u_{\beta,h}\|_\infty
 \leq
 C_B\bigl(1+\|\nabla u_{\beta,h}\|_\infty^2+D_h\bigr).
 \label{eq:aux-estimates}
\end{equation}
\end{proposition}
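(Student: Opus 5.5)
We prove \eqref{eq:aux-estimates} with a Hou--Ma--Wu type maximum principle argument for the largest eigenvalue of $\chi+\ddc u_{\beta,h}$. There is no $\mathcal C$-subsolution here. Its role is played by the fixed margin of $\chi$ in $\Gamma$ and by the penalty term, which is increasing in $u$. Write $u=u_{\beta,h}$, $A=g^{-1}(\chi+\ddc u)$, $F^{i\bar j}=\partial F/\partial A_{i\bar j}$ and $\mathcal F=\sum_i F^{i\bar i}$.

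Because $\Gamma\subset\Gamma_1$, we have $|\partial\bar\partial u|\le C(1+\lambda_1)$, where $\lambda_1$ is the largest eigenvalue of $A$. It therefore suffices to bound $\lambda_1$ from above. We apply the maximum principle to
\[
Q=\log\lambda_1+\varphi(|\nabla u|^2)+\psi(u-h),
\]
with the following choices:
\begin{itemize}
\item $\varphi(s)=-\tfrac12\log\bigl(1-s/(2K)\bigr)$, where $K=1+\sup|\nabla u|^2$;
\item $\psi(s)=-A_0 s+\tfrac{1}{2}\bigl(s-\min(u-h)\bigr)^2/B'$, a convex function of $u-h$ whose slope is controlled by $B'\sim B+C_0/\beta$.
\end{itemize}
By Proposition~\ref{prop:penalized-c0}, $\osc(u-h)\le B+2C_0/\beta$, so these functions are uniformly controlled. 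At a maximum point we perturb $\lambda_1$ in the standard way so that it is smooth, and choose coordinates diagonalizing $g$ and $A$.

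\textbf{Step 1: differentiate the equation.} Apply $\nabla_1$ and $\nabla_{\bar1}$ to $\log F(A)=\beta(u-h)$. This gives $F^{i\bar i}\nabla_1A_{i\bar i}=\beta F(\nabla_1 u-\nabla_1 h)$. Together with concavity, the second derivative yields
\[
F^{i\bar i}\nabla_{\bar1}\nabla_1A_{i\bar i}\ \ge\ -F^{p\bar q,r\bar s}\nabla_1A_{p\bar q}\nabla_{\bar1}A_{r\bar s}+\beta F\,\Delta\text{-type terms}-CD_h\beta F+\ldots
\]
The key sign observation concerns the term $\beta F(u_{1\bar1}-h_{1\bar1})$. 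Its $u_{1\bar1}\approx\lambda_1$ part is \emph{positive}, so it is a good term. The $h$ part is bounded below by $-\beta F\|\nabla^2 h\|$. When we commute derivatives, torsion and curvature give errors of size $C\lambda_1\mathcal F+C|\nabla u|\,|\nabla A|\,\mathcal F$. These are handled with the usual Hermitian tricks: Tosatti--Weinkove for the third-order torsion terms, absorbed using the third-order good term from $\varphi''$ and the $F^{1\bar 1}$ terms.

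\textbf{Step 2: the gradient term.} Since $f$ is homogeneous of degree one, $F\ge \sum\lambda_iF^{i\bar i}$ by Lemma~\ref{lem:concavity-pairing}, with equality here. The $\varphi$ term contributes
\[
\varphi'F^{i\bar i}(|u_{ii}|^2+|u_{i\bar i}|^2)-C\varphi'|\nabla u|^2\mathcal F,
\]
plus the term $\varphi'\cdot 2\mathrm{Re}\,\beta F\,\langle\nabla u,\nabla(u-h)\rangle$. This last term is $\ge-\beta F\varphi'|\nabla u||\nabla h|$, and it is the only place a large multiple of $\beta$ appears with a possibly bad sign.

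\textbf{Step 3: the penalty term.} The $\psi$ term contributes $\psi'F^{i\bar i}(u-h)_{i\bar i}$ plus $\psi''F^{i\bar i}|(u-h)_i|^2$. Using the cone-interiority margin, there is $\varepsilon>0$ with $\lambda(g^{-1}\chi)-\varepsilon\one\in\Gamma$. Lemma~\ref{lem:chengxu-pairing} then gives
\[
F^{i\bar i}(u_{i\bar i}-\chi_{i\bar i}+\varepsilon)\cdots\le F-\varepsilon\mathcal F,
\]
so that $-A_0F^{i\bar i}u_{i\bar i}\ge A_0\varepsilon\mathcal F-A_0F-CA_0D_h\mathcal F$ after bounding $h_{i\bar i}$. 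We also have $F=e^{\beta(u-h)}\le C(1+\|\partial\bar\partial h\|)$ by Lemma~\ref{lem:penalized-rhs}, so $F\le CD_h$.

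\textbf{Step 4: the bad $\beta$ term (main obstacle).} The bad term $-\beta F\varphi'|\nabla u||\nabla h|$ must be controlled by the good $\beta$ term $\beta F\lambda_1^{-1}(u_{1\bar1}-h_{1\bar1})\ge\beta F(1-CD_h/\lambda_1)$ from Step 1. Since $\varphi'\le C/K$, we have $\varphi'|\nabla u||\nabla h|\le C|\nabla h|/\sqrt K\le\tfrac12$ once $K\ge CD_h$. Otherwise $K\le CD_h$ and the bound is immediate. Hence, whenever $\lambda_1\ge C D_h$, the $\beta$ terms have a good sign and can be discarded, and $\beta$ drops out entirely. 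The remaining inequality is the standard one. Either $F^{1\bar1}\lambda_1^2$-type terms dominate, or $A_0\varepsilon\mathcal F$ dominates $C(1+K+D_h)\mathcal F/\lambda_1$-type errors. Since $\mathcal F\ge f(\one)$-type lower bounds give $\mathcal F\ge c>0$ by homogeneity and concavity, we conclude $\lambda_1\le C_B(1+K+D_h)$, which is \eqref{eq:aux-estimates}. The delicate bookkeeping is the case split in Step 4, which is what makes the constant independent of $\beta$.
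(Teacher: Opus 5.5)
Your argument has a genuine gap in Steps 3--4, in the case $-\lambda_n\le\delta\lambda_1$.

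\textbf{The unabsorbed terms.} Because you take $\psi$ to be a function of $u-h$, the term $\psi'F^{i\bar i}(u-h)_{i\bar i}$ contains the piece $-\psi'F^{i\bar i}h_{i\bar i}\ge -CA_0\|\partial\bar\partial h\|_\infty\mathcal F$. In addition, homogeneity turns $\psi'F^{i\bar i}u_{i\bar i}$ into $A_0\varepsilon\mathcal F-CA_0F$, where $F=e^{\beta(u-h)}$ can be as large as $CD_h$.

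Both errors come from $L\psi$, not from $L\log\lambda_1$, so they carry no factor $\lambda_1^{-1}$. They are of size $A_0D_h\mathcal F$ (using $\mathcal F\ge f(\one)$ for the second one). They are not of the ``$C(1+K+D_h)\mathcal F/\lambda_1$-type'' you assert in Step 4.

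At that level the only good term is $A_0\varepsilon\mathcal F$ with $\varepsilon$ fixed, and it cannot beat $CA_0D_h\mathcal F$ when $D_h\gg1$. The term $F^{1\bar1}\lambda_1^2/K$ does not help either: without a $\mathcal C$-subsolution there is no dichotomy giving $F^{1\bar1}\gtrsim\mathcal F$. In the other case, $-\lambda_n>\delta\lambda_1$, these terms are harmless, since $F^{n\bar n}\lambda_n^2/K\gtrsim\delta^2\lambda_1^2\mathcal F/K$. So the failure is specific to the second case.

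\textbf{How the paper avoids this.} The paper takes $\psi=\psi(u)$. Then $F^{kk}u_{k\bar k}=F-F^{kk}\chi_{k\bar k}$ contains no second derivatives of $h$. The margin $\chi-\epsilon_2\omega\in\Gamma$ together with Lemma~\ref{lem:chengxu-pairing} gives $\psi'F^{kk}u_{k\bar k}\ge-3AF+\epsilon_2A\mathcal F$.

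The one remaining bad term, $-3AF$, is absorbed by the good $\beta$-term $E_\beta\ge\frac{\beta}{4}F$, which is valid once $\lambda_1>4K$. This absorption requires $\beta\ge12A$, and that is exactly why the proposition is stated only for sufficiently large $\beta$. Your conclusion that the $\beta$ terms ``can be discarded, and $\beta$ drops out entirely'' therefore throws away the only term that controls $-A_0F$.

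\textbf{A secondary issue.} With $K=1+\sup|\nabla u|^2$, the case $K\le CD_h$ is not ``immediate''. There $\varphi'|\nabla u||\nabla h|$ can be of size $\sqrt{D_h/K}\gg1$, so the bad $\beta$-term from $\varphi$ is not absorbed. You should build $D_h$ into $K$, as the paper does with $K=\|\nabla u\|_\infty^2+D_h$. Then $\varphi'|\nabla u||\nabla h|\le\frac14$ holds unconditionally.
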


\begin{proof}
We follow the proof in \cite{S}. In that paper, the right-hand side of the
Hessian equation does not depend on the solution, whereas the right-hand side
of \eqref{eq:penalized} depends on the solution in our setting. Moreover, we do
not assume the existence of the $\mathcal{C}$-subsolution used in \cite{S}.
Fortunately, the sign $\beta>0$ is favorable for our estimate. We must still
ensure that the estimate is uniform in $\beta$. For simplicity, write
$u=u_{\beta,h}$. Let $\lambda=(\lambda_1,\ldots,\lambda_n)$ be the eigenvalues
of $\chi_u$ with respect to $\omega$, ordered so that
$\lambda_1\geq\lambda_2\geq\cdots\geq\lambda_n$. We apply the maximum
principle to the test function
\[
 Q=\log \lambda_1+\phi(|\nabla u|^2)+\psi(u).
\]
Here $\phi$ and $\psi$ are functions to be determined. Let $p$ be a maximum
point of $Q$, and choose normal holomorphic coordinates $z$ centered at $p$.
Let $E$ be a diagonal matrix with $E^{11}=0$ and small entries satisfying
$0<E^{22}<\cdots<E^{nn}<2E^{22}$. To distinguish the deformed tensor from
the background metric, write
\[
 \begin{aligned}
 \widetilde g_{i\bar j}&:=(\chi_u)_{i\bar j}
 =\chi_{i\bar j}+u_{i\bar j},\\
 F^{i\bar j}&:=\frac{\partial F}{\partial \widetilde g_{i\bar j}},
 &
 F^{i\bar j,k\bar\ell}
 &:=\frac{\partial^2F}
 {\partial\widetilde g_{i\bar j}\,\partial\widetilde g_{k\bar\ell}}.
 \end{aligned}
\]
At the chosen diagonal point, we abbreviate $F^{k\bar k}$ by $F^{kk}$.
Define $\widetilde{A}=(\widetilde g_{i\bar j})-E$. At the origin,
$\widetilde{A}$ has eigenvalues
\begin{equation*}
\widetilde{\lambda}_i = \lambda_i - E^{ii}.
\end{equation*}
Thus the eigenvalues of $\widetilde{A}$ are distinct. Differentiating
$F(A_u)=e^{\beta(u-h)}$, we obtain
\begin{equation}\label{eq:first-derivative}
e^{\beta(u-h)}\beta(u-h)_p
=F^{i\bar j}\widetilde g_{i\bar jp}
=F^{kk}\widetilde g_{k\bar kp},
\end{equation}
\begin{equation}
e^{\beta(u-h)}\beta (u-h)_{1\bar{1}} + e^{\beta(u-h)} \beta^2 (u-h)_1 (u-h)_{\bar{1}}= F^{p\bar q,r\bar s} \widetilde g_{p \bar{q}1}\widetilde g_{r\bar{s}\bar{1}} + F^{kk}\widetilde g_{k \bar{k} 1 \bar{1}}.
\end{equation}
Define the linearized operator by $Lw=F^{i\bar j}w_{i\bar j}$.
Equation~\cite[(85)]{S} gives
\begin{equation}
\begin{split}
L(\log \widetilde{\lambda}_1)  \ge & \frac{-F^{p\bar q,r\bar s}\widetilde g_{p \bar{q}1} \widetilde g_{r \bar{s} \bar{1}}  + e^{\beta(u-h)}\beta (u-h)_{1\bar{1}} + e^{\beta(u-h)} \beta^2 (u-h)_1 (u-h)_{\bar{1}}}{\lambda_1} \\
& -\frac{F^{kk}|\widetilde g_{k\bar{1}1}|^2 }{\lambda_1^2} - C_0(\lintr +\lambda_1^{-2} |F^{kk}\widetilde g_{1\bar{1}k }|).
\end{split}
\end{equation}
Here $\lintr=\sum_k F^{kk}$. Unlike in \cite{S}, we cannot
absorb the terms involving $h$ into $-C_0\lintr$. Consequently, we do not
need a positive lower bound for $\lintr$.

Set $K=\|\nabla u\|_{\infty}^2+D_h$. Thus
$Q=\log\widetilde\lambda_1+\phi(|\nabla u|^2)+\psi(u)$.
Here $\phi \triangleq -\frac{1}{2} \log (1-\frac{t}{3K})$ is similar to the function used in \cite{HMW}. However, we change the coefficient of the $t$ term in order to control some terms.
$\phi$ satisfies:
\begin{equation}\label{eq:phi-derivatives}
\frac{1}{6K} < \phi' < \frac{1}{4K} ,\,\,\, \phi''=2\phi'^2>0.
\end{equation}
By Proposition~\ref{prop:penalized-c0}, there exists a constant $C_1$,
depending on $\chi$, $\omega$, $\Gamma$, and $\|h\|_{\infty}$, such that
\begin{equation}
\|u\|_{\infty}\leq C_1.
\end{equation}
We define $\psi\colon[-C_1,C_1]\to\R$ by
\begin{equation*}
\psi (t) = -2A t +\frac{A\tau}{2}t^2.
\end{equation*}
Choose $\tau>0$, depending on $C_1$, sufficiently small that
\begin{equation}\label{eq:psi-estimates}
A \le -\psi' \le 3A, \psi''= A\tau,
\end{equation}
where $A$ is a large constant to be determined. Using
\eqref{eq:first-derivative} and the calculation leading to
\cite[Equation~(95)]{S}, we obtain
\begin{equation*}
\begin{split}
F^{kk} u_{pk \bar{k}} u_{\bar p} &= e^{\beta(u-h)}\beta (u-h)_p u_{\bar p} - F^{kk} \chi_{k \bar k p}u_{\bar p} -T^{k}_{kp}F^{kk}\lambda_k u_{\bar p} + T^q_{kp} F^{kk}\chi_{q \bar k}u_{\bar p}+ F^{kk} R_{k \bar k p}^q u_q u_{\bar p} \\
& \ge - e^{\beta (u-h)} h_p u_{\bar p} - F^{kk} \chi_{k \bar k p}u_{\bar p} -T^{k}_{kp}F^{kk}\lambda_k u_{\bar p} + T^q_{kp} F^{kk}\chi_{q \bar k}u_{\bar p}+ F^{kk} R_{k \bar k p}^q u_q u_{\bar p} \\
& \ge -e^{\beta(u-h)} \beta K - \epsilon_1 F^{kk}\lambda_k^2 - C_{\epsilon_1}\lintr K.
\end{split}
\end{equation*}
Here $\epsilon_1$ is a small constant to be determined, and $T$ and $R$
denote the Chern torsion and curvature of $\omega$. Choosing $\epsilon_1$
sufficiently small and applying the calculation in
\cite[Equations~(96)--(104)]{S} at the maximum point of $Q$, we obtain
\begin{equation*}
\begin{split}
0 \ge LQ \ge & \frac{-F^{p\bar q,r\bar s}\widetilde g_{p \bar{q}1} \widetilde g_{r \bar{s} \bar{1}}  + e^{\beta(u-h)}\beta (u-h)_{1\bar{1}} + e^{\beta(u-h)} \beta^2 (u-h)_1 (u-h)_{\bar{1}}}{\lambda_1} \\
& -\frac{F^{kk}|\widetilde g_{k \bar 1 1}|^2}{\lambda_1^2} + F^{kk}\phi'' |u_{pk}u_{\bar p}+ u_p u_{\bar p k}|^2 + \frac{1}{20K} F^{kk} (\lambda_k^2 + \sum_p |u_{pk}|^2) \\
& + \psi'' F^{kk}u_k u_{\bar k} + \psi' F^{kk}u_{k \bar k} -C_0(\lintr + A \lambda_1^{-1} F^{kk}|u_k|) -2\beta K \phi' e^{\beta(u-h)}.
\end{split}
\end{equation*}
Compared with \cite[Equation~(104)]{S}, the additional terms are those
involving $\beta$:
\begin{equation*}
E_{\beta} \triangleq \frac{ e^{\beta(u-h)}\beta (u-h)_{1\bar{1}} + e^{\beta(u-h)} \beta^2 (u-h)_1 (u-h)_{\bar{1}}}{\lambda_1}-2\beta K \phi' e^{\beta(u-h)}.
\end{equation*}
By \eqref{eq:phi-derivatives}, $|\phi'K|\leq\frac14$. Hence
\begin{equation*}
\begin{split}
E_{\beta} &\ge \frac{e^{\beta(u-h)} \beta (u_{1 \bar 1}+ \chi_{1\bar 1}- \chi_{1\bar 1}-h_{1\bar 1})}{\lambda_1} -2\beta K \phi' e^{\beta (u-h)}= \frac{e^{\beta(u-h)} \beta \Big(\lambda_1(1-2K \phi')-\chi_{1\bar{1}}-h_{1\bar 1} \Big)}{\lambda_1}\\
& \ge  \frac{e^{\beta(u-h)} \beta (\frac{\lambda_1}{2}- K )}{\lambda_1} 
\end{split}
\end{equation*}
If $K\geq\frac{\lambda_1}{4}$, the desired estimate already follows.
Hence we may assume that $K<\frac{\lambda_1}{4}$, which gives
\begin{equation}\label{eq:beta-good-term}
E_{\beta} \ge \frac{\beta}{4} e^{\beta(u-h)} \ge 0.
\end{equation}

Following Hou--Ma--Wu \cite{HMW}, we consider separately the two cases
$-\lambda_n>\delta\lambda_1$ and $-\lambda_n\leq\delta\lambda_1$, where
$\delta>0$ is small.

\textbf{Case 1:} $-\lambda_n>\delta\lambda_1$. By
\eqref{eq:beta-good-term}, the additional term $E_{\beta}$ is nonnegative.
The argument in \cite{S} therefore shows that $K^{-1}\lambda_1$ is bounded.

\textbf{Case 2:} $-\lambda_n\leq\delta\lambda_1$. We cannot directly use
the proof in \cite{S}, because its treatment of this case uses a
$\mathcal{C}$-subsolution and an upper bound for the right-hand side of
\eqref{eq:penalized}. In our setting, we cannot bound that right-hand side by a
constant independent of $h$; the best available estimate is
Lemma~\ref{lem:penalized-rhs}. We therefore use only calculations
(111)--(126) from Case~2 of \cite{S}, which do not use a
$\mathcal{C}$-subsolution. They give
\begin{equation}\label{eq:second-order-case2}
\begin{aligned}
0 \geq {}&E_{\beta}+
\left(
 \frac{1}{2}\psi''
-\delta^2 A^2
-A^2\lambda_1^{-1}
\right)
F^{kk}|u_k|^2
+\frac{1}{32K}F^{kk}\lambda_k^2
+\psi'F^{kk}u_{k\bar{k}} \\
&\quad
-C_0\lintr
-C A^2\delta^{-1}F^{11}K.
\end{aligned}
\end{equation}
By degree-one homogeneity,
\begin{equation*}
F^{kk}u_{k \bar k}=F^{kk} (\chi_{k \bar k} + u_{k \bar k}) - F^{kk}\chi_{k \bar k}= f-F^{kk} \chi_{k\bar k}= e^{\beta(u-h)} - F^{kk}\chi_{k \bar k}. 
\end{equation*}
indeed,
\begin{equation*}
F^{kk} (\chi_{k \bar k} + u_{k \bar k})=f.
\end{equation*}
From \eqref{eq:psi-estimates}, we obtain
\begin{equation}\label{eq:fixed-subsolution}
\psi' F^{kk}u_{k \bar k} \ge -3A e^{\beta(u-h)} -A(-F^{kk}\chi_{k \bar k})= -3A e^{\beta(u-h)} + A F^{kk}(\chi_{k \bar k}- \epsilon_2)+ \epsilon_2 A \lintr.
\end{equation}
Here $\epsilon_2>0$ is chosen sufficiently small that
$\chi-\epsilon_2\omega\in\Gamma$. By
Lemma~\ref{lem:chengxu-pairing},
$F^{kk}(\chi_{k\bar k}-\epsilon_2)\geq0$. Thus
\eqref{eq:fixed-subsolution} becomes
\begin{equation}\label{eq:fixed-subsolution-lower}
\psi' F^{kk}u_{k \bar k}  \ge -3A e^{\beta(u-h)}+ \epsilon_2 A \lintr.
\end{equation}
Substituting \eqref{eq:beta-good-term} and
\eqref{eq:fixed-subsolution} into \eqref{eq:second-order-case2} gives
\begin{equation}\label{eq:second-order-final}
0 \ge (\frac{\beta}{4}-3A) e^{\beta (u-h)} + (\epsilon_2 A - C_0)\lintr + \frac{1}{32K} F^{11}\lambda_1^2 -13A^2 \delta^{-1}F^{11}K + (\frac{1}{2}\psi''- \delta^2 A^2 -A^2 \lambda_1^{-1}) F^{kk}|u_k|^2.
\end{equation}
First choose $A$ sufficiently large that $\epsilon_2A-C_0>0$, and then
choose $\beta$ sufficiently large that $\frac{\beta}{4}-3A>0$. Finally,
choose $\delta$ sufficiently small. If $\lambda_1$ is sufficiently large,
then
\begin{equation*}
 \frac{1}{2}\psi''- \delta^2 A^2 -A^2 \lambda_1^{-1}= \frac{1}{2}\tau A- \delta^2 A^2 -A^2 \lambda_1^{-1} >0.
\end{equation*}
Equation~\eqref{eq:second-order-final} then implies
 \begin{equation*}
 0 \ge  \frac{1}{32K} F^{11}\lambda_1^2 -13A^2 \delta^{-1}F^{11}K.
 \end{equation*}
Thus $\lambda_1\leq CK$ for some constant $C$.
\end{proof}

\begin{proposition}
\label{prop:auxiliary-estimates}
For every $B\geq\osc_M h$ there is a constant $C_B$, depending only on
$B$, $f$, $\omega$, and $\chi$, such that
\begin{equation}
 \|\nabla u_{\beta,h}\|_\infty^2\leq C_B D_h.
\end{equation}
\end{proposition}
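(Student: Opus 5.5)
This is a gradient estimate. Proposition~\ref{prop:auxiliary-estimates1} already gives a second-order bound in terms of the gradient, so the natural route is the blow-up argument of Sz\'ekelyhidi \cite{S}: rescale, pass to a limit that is a bounded Lipschitz $\Gamma$-solution on $\C^n$, and invoke Lemma~\ref{lem:gamma-liouville}. The only new point is to keep track of $D_h$ and of the $\beta$-dependent right-hand side, so that all bounds are uniform in $\beta$.

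\emph{Setting up the contradiction.} Suppose the estimate fails. Then there are data $h_j$ with $\osc h_j\le B$, parameters $\beta_j\ge\beta_0$, and solutions $u_j=u_{\beta_j,h_j}$ such that
\[
N_j^2:=\|\nabla u_j\|_\infty^2 \ge j\,D_{h_j}.
\]
Since $D_{h_j}\ge1$, this gives $N_j\to\infty$. Normalize by subtracting a constant so that $\min h_j=0$; then $\|h_j\|_\infty\le B$. Proposition~\ref{prop:penalized-c0} then gives $\|u_j\|_\infty\le C_B$ for $\beta\ge\beta_0$. Let $p_j$ be a point with $|\nabla u_j(p_j)|=N_j$, and pass to a subsequence so that $p_j\to p$. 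In holomorphic coordinates centered at $p_j$, normalized so that $g(p_j)=\mathrm{Id}$, define
\[
v_j(z)=u_j(p_j+z/N_j)
\]
on the ball $|z|<cN_j$.

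\emph{Uniform bounds on the rescaled functions.} We have $|v_j|\le C_B$, $|\nabla v_j|\le C$, and $|\nabla v_j(0)|\to 1$. By Proposition~\ref{prop:auxiliary-estimates1},
\[
|\partial\bar\partial v_j| = N_j^{-2}|\partial\bar\partial u_j| \le C_B N_j^{-2}\bigl(1+N_j^2+D_{h_j}\bigr)\le C.
\]
Hence $\Delta v_j$ is bounded, and $v_j$ is bounded in $C^{1,\alpha}_{loc}$ by elliptic $W^{2,p}$ estimates. After passing to a subsequence, $v_j\to v$ in $C^{1,\alpha}_{loc}(\C^n)$. The limit $v$ is bounded, Lipschitz, and has $|\nabla v(0)|=1$, so $v$ is nonconstant.

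\emph{Identifying the limit as a $\Gamma$-solution.} The rescaled matrix satisfies
\[
N_j^{-2}(\chi+dd^c u_j)=N_j^{-2}\chi+dd^c v_j
\]
with respect to metrics tending to the Euclidean one. Its eigenvalues lie in $\Gamma$, and by Assumption~\ref{ass2} together with Lemma~\ref{lem:penalized-rhs},
\[
f\bigl(\lambda(N_j^{-2}\chi+dd^c v_j)\bigr)=N_j^{-2}e^{\beta_j(u_j-h_j)}\le C N_j^{-2}\bigl(1+\|\partial\bar\partial h_j\|\bigr)\le C N_j^{-2}D_{h_j}\le C/j\to0.
\]
This is the crucial point: the right-hand side vanishes in the limit \emph{uniformly in $\beta_j$}, because Lemma~\ref{lem:penalized-rhs} controls it by $D_h$ rather than by anything depending on $\beta$.

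Stability of viscosity solutions under local uniform convergence then gives both halves of the $\Gamma$-solution property. For the subsolution part, if $P$ touches $v$ from above, perturb $P$ by $\varepsilon|z|^2$ to get touching points for $v_j$; closedness of $\overline\Gamma$ gives $\lambda(P_{i\bar j})\in\overline\Gamma$. For the supersolution part, suppose $P$ touches $v$ from below with $\lambda(P_{i\bar j})\in\Gamma$. Replace $P$ by $P-\varepsilon|z|^2$, which still has spectrum in $\Gamma$ and makes the touching strict. Then $v_j-P$ has nearby local minima $z_j$. By monotonicity and Assumption~\ref{ass1},
\[
f\bigl(\lambda(P_{i\bar j}+o(1))\bigr)\le f\bigl(\lambda(N_j^{-2}\chi+dd^cv_j)(z_j)\bigr)\to0,
\]
where the $o(1)$ absorbs the metric distortion and the term $N_j^{-2}\chi$, and the comparison holds in the viscosity sense (it is classical here since the $v_j$ are smooth). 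In the limit $f(\lambda(P))=0$, which contradicts $f>0$ on $\Gamma$. Therefore $v$ is a bounded Lipschitz $\Gamma$-solution on $\C^n$, and Lemma~\ref{lem:gamma-liouville} says $v$ is constant. This contradicts $|\nabla v(0)|=1$.

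The main obstacle is keeping every bound independent of $\beta_j$ and of the derivatives of $h_j$. This is exactly why Proposition~\ref{prop:auxiliary-estimates1} is stated with explicit $D_h$ dependence and why the rescaled right-hand side is controlled by Lemma~\ref{lem:penalized-rhs} instead of a $\beta$-dependent bound. The contradiction hypothesis $N_j^2\ge jD_{h_j}$ is precisely what makes the rescaled Hessian bound uniform and sends the rescaled right-hand side to zero.
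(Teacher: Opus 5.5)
Your proposal is correct and follows the same blow-up argument as the paper: a contradiction sequence, the rescaled Hessian bound from Proposition~\ref{prop:auxiliary-estimates1}, $\beta$-uniform control of the rescaled right-hand side via Lemma~\ref{lem:penalized-rhs} and homogeneity, and then Lemma~\ref{lem:gamma-liouville}. The differences are minor: you make the normalization $\min_M h_j=0$ explicit to obtain a uniform $L^\infty$ bound, and in the supersolution step you conclude directly from continuity and positivity of $f$ at $\lambda(P_{i\bar j}(z_0))\in\Gamma$, whereas the paper reaches the same contradiction through an auxiliary scaling $b_j\to\infty$ together with Lemma~\ref{lem:cone-translation}.
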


\begin{proof}
Suppose that the asserted gradient estimate fails. Then there exists a
sequence of data and solutions satisfying
\begin{equation}\label{eq:blowup-equation}
\log f (\chi+ dd^c u_{\beta_j,h_j})= \beta_j (u_{\beta_j,h_j}-h_j),
\end{equation}
satisfying
\begin{equation}\label{eq:blowup-ratio}
 \frac{D_{h_j}}{N_j}\longrightarrow0,
\end{equation}
where $N_j=\|\nabla u_{\beta_j,h_j}\|_\infty^2$. After discarding finitely
many terms, we may assume that $D_{h_j}/N_j\leq1$. Hence
Proposition~\ref{prop:auxiliary-estimates1} yields
\begin{equation}\label{eq:blowup-hessian}
|\partial \bar \partial u_{\beta_j,h_j}|_{\omega} \le CN_j.
\end{equation}
Define $\widetilde{\omega}=N_j\omega$, and let $p_j$ be a maximum point of
$|\nabla u_{\beta_j,h_j}|$. Choose holomorphic coordinates $z$ centered at
$p_j$ such that
\begin{equation}\label{eq:blowup-coordinates}
\begin{split}
\widetilde{\omega}_{i \bar j} &= \delta_{ij} + O(N_j^{-1}|z|) \\
\chi_{i \bar j} &= O(N_j^{-1}),
\end{split}
\end{equation}
and the coordinates $z$ are defined for $|z|\leq O(N_j^{1/2})$.
Equation~\eqref{eq:blowup-hessian} implies that
\begin{equation}\label{eq:blowup-scaled-hessian}
|\partial \bar \partial u_{\beta_j,h_j}|_{\widetilde{\omega}}\le C.
\end{equation}
Proposition~\ref{prop:penalized-c0} gives a uniform $L^{\infty}$ bound for
$u_{\beta_j,h_j}$. Combining this with
\eqref{eq:blowup-scaled-hessian} and \eqref{eq:blowup-coordinates}, we obtain
the uniform bound
 \begin{equation}\label{eq:blowup-c1alpha}
 \|u_{\beta_j,h_j}\|_{C^{1,\alpha}}<C'
 \end{equation}
 on this ball in these coordinates. We rewrite \eqref{eq:blowup-equation} as
 \begin{equation}\label{eq:blowup-scaled-equation}
N_j f( \lambda[\widetilde{\omega}^{j \bar p}(\chi_{i \bar p}+ (u_{\beta_j,h_j})_{i \bar p} )]) =f(N_j \lambda[\widetilde{\omega}^{j \bar p}(\chi_{i \bar p}+ (u_{\beta_j,h_j})_{i \bar p} )])= e^{\beta_j(u_{\beta_j, h_j}-h_j)}.
 \end{equation}
Here we use the degree-one homogeneity of $f$. Since
$(u_{\beta_j,h_j})_{i\bar p}$ is uniformly bounded, while
$\chi_{i\bar p}$ tends to zero and $\widetilde{\omega}^{j\bar p}$ tends to
the identity matrix, we have
\begin{equation}\label{eq:blowup-eigenvalues}
\lambda[\widetilde{\omega}^{j \bar p} (\chi_{i \bar p}+ (u_{\beta_j,h_j})_{i \bar p})]= \lambda[(u_{\beta_j,h_j})_{i \bar j}]+ O(N_j^{-1}|z|).
\end{equation}
Lemma~\ref{lem:penalized-rhs} gives
\begin{equation}\label{eq:blowup-rhs-upper}
e^{\beta_j(u_{\beta_j,h_j}- h_j)}\le C D_{h_j}.
\end{equation}
Combining \eqref{eq:blowup-scaled-equation},
\eqref{eq:blowup-eigenvalues}, and \eqref{eq:blowup-rhs-upper}, we obtain
\begin{equation*}
f (\lambda[(u_{\beta_j,h_j})_{i \bar j}] +O(N_j^{-1}|z|))\le \frac{C D_{h_j}}{N_j}
\end{equation*}
By \eqref{eq:blowup-c1alpha}, after passing to a subsequence, still denoted by
$(u_{\beta_j,h_j})$, we obtain convergence in $C^{1,\alpha}$ to a function
$v\colon\C^n\to\R$. Equations~\eqref{eq:blowup-ratio} and $D_{h_j}\geq1$
imply that $N_j\to\infty$, so $v$ is defined on all of $\C^n$. By
construction, $|v|$ and $|\nabla v|$ are globally bounded, and
$|\nabla v(0)|=1$.

It suffices to show that $v$ is a $\Gamma$-solution in the sense of
Definition~\ref{def:gamma-solution}, since this contradicts
Lemma~\ref{lem:gamma-liouville}.

First, suppose that a $C^2$ function $\psi$ touches $v$ from above at
$z_0$. We show that
$\lambda(\psi_{i\bar j}(z_0))\in\overline{\Gamma}$. By the construction of
$v$, for every $\epsilon>0$ and all sufficiently large $j$, there exist
$z_1\in B_{\epsilon}(z_0)$ and a constant $a$ such that
\begin{equation*}
\psi+ \epsilon |z-z_0|^2 +a \ge u_{\beta_j,h_j} \text{ on } B_1(z_0), \text{ with equality at }z_1.
\end{equation*}
This implies that $\psi_{i \bar j}(z_1) + \epsilon \delta_{ij} \ge (u_{\beta_j,h_j})_{i\bar j}(z_1)$. Since
\begin{equation*}
\lambda[\widetilde{\omega}^{j \bar p}(\chi_{i \bar p }+ (u_{\beta_j,h_j})_{i \bar p})]=\frac{1}{N_j}\lambda[\omega^{j \bar p}(\chi_{i \bar p }+ (u_{\beta_j,h_j})_{i \bar p})] \in \frac{1}{N_j}\Gamma = \Gamma.
\end{equation*}
Since $\Gamma+\Gamma_n\subset\Gamma$ and
$\psi_{i\bar j}(z_1)+\epsilon\delta_{ij}\geq
(u_{\beta_j,h_j})_{i\bar j}(z_1)$, it follows that
\begin{equation}\label{eq:blowup-upper-admissible}
\lambda[\widetilde{\omega}^{j \bar p}(\chi_{i \bar p }+ \epsilon \delta_{ip} + \psi_{i \bar p})] (z_1) \in \Gamma.
\end{equation}
As in \eqref{eq:blowup-eigenvalues},
\begin{equation}\label{eq:blowup-test-eigenvalues}
\lambda[\widetilde{\omega}^{j \bar p} (\chi_{i \bar p}+ \psi_{i \bar p})]= \lambda[\psi_{i \bar j}]+ O(N_j^{-1}|z|).
\end{equation}
Letting $\epsilon$ tend to zero in \eqref{eq:blowup-upper-admissible}, we obtain
\begin{equation*}
\lambda[\psi_{i \bar p}](z_0) \in \overline{\Gamma}.
\end{equation*}

Suppose now that a $C^2$ function $\psi$ touches $v$ from below at $z_0$.
We show that
$\lambda[\psi_{i\bar j}(z_0)]\in\R^n\setminus\Gamma$. Arguing as above,
for every $\epsilon>0$ and all sufficiently large $j$, there exist
$z_1\in B_{\epsilon}(z_0)$ and a constant $a$ such that
\begin{equation*}
\psi- \epsilon |z-z_0|^2 + a \le u_{\beta_j,h_j} \text{ on } B_1(z_0), \text{ with equality at }z_1.
\end{equation*}
Consequently,
$\psi_{i\bar j}(z_1)-\epsilon\delta_{i\bar j}\leq
(u_{\beta_j,h_j})_{i\bar j}(z_1)$. If
$\lambda[\psi_{i\bar j}(z_1)-3\epsilon\delta_{ij}]\in\Gamma$, then
$\lambda((u_{\beta_j,h_j})_{i\bar j})\in
\Gamma+2\epsilon\mathbf{1}$. Set
\[
 \Lambda_j:=
 \lambda\!\left[
 \widetilde{\omega}^{j\bar p}
 \bigl(\chi_{i\bar p}+(u_{\beta_j,h_j})_{i\bar p}\bigr)
 \right](z_1).
\]
By \eqref{eq:blowup-eigenvalues}, after increasing $j$ if necessary,
\begin{equation}\label{eq:blowup-cone-shift}
 \Lambda_j\in\Gamma+\epsilon\mathbf{1}.
\end{equation}
Choose a sequence $\{b_j\}$ satisfying
\begin{equation*}
\lim_{j \rightarrow \infty} b_j =\infty,
\qquad
\lim_{j \rightarrow \infty} \frac{b_j D_{h_j}}{N_j}=0.
\end{equation*}
By Lemma~\ref{lem:cone-translation} and \eqref{eq:blowup-cone-shift}, we have
\begin{equation*}
 \lim_{j\to\infty}f(b_j\Lambda_j)=\infty.
\end{equation*}
On the other hand, \eqref{eq:blowup-scaled-equation}, the homogeneity of
$f$, and \eqref{eq:blowup-rhs-upper} give
\begin{equation*}
 f(b_j\Lambda_j)
 =b_jf(\Lambda_j)
 =\frac{b_j}{N_j}e^{\beta_j(u_{\beta_j,h_j}-h_j)}(z_1)
 \leq\frac{CD_{h_j}b_j}{N_j}\longrightarrow0,
\end{equation*}
a contradiction.
Thus, we cannot have
$\lambda(\psi_{i\bar{j}}(z_1))\in\Gamma+3\epsilon\mathbf{1}$. Letting
$\epsilon\to0$, we have $z_1\to z_0$ and
$\lambda(\psi_{i\bar{j}}(z_0))\in\R^n\setminus\Gamma$. This completes the
proof that $v$ is a $\Gamma$-solution.
\end{proof}

\begin{proposition}\label{prop:uniform-convergence}
For every $h\in C^\infty(M)$,
\begin{equation*}
\lim_{\beta \rightarrow \infty}
\|u_{\beta,h}-P_{\Gamma,\chi}(h)\|_{\infty}=0.
\end{equation*}
\end{proposition}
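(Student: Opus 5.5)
The plan is to squeeze $u_{\beta,h}$ between two quantities that both converge uniformly to $P:=P_{\Gamma,\chi}(h)$. Throughout, write $u_\beta=u_{\beta,h}$.

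\emph{Upper bound.} By Proposition~\ref{prop:penalized-c0}, or more precisely by its proof, $u_\beta-h\le C_0/\beta$ on $M$. Since $u_\beta$ is smooth and admissible, the function $u_\beta-C_0/\beta$ is a competitor in the definition of $P$. Hence
\[
u_\beta\le P+\frac{C_0}{\beta}.
\]

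\emph{Lower bound.} The natural route is a comparison argument against a strict subsolution built from a smooth approximation of $P$. The obstacle is that $P$ is not yet known to be smooth. I would therefore avoid needing regularity of $P$ and compare $u_\beta$ directly with arbitrary smooth strictly admissible competitors lying below $h$.

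Let $v\in C^2(M)$ satisfy $\lambda[\chi+\ddc v]\in\Gamma$ and $v\le h-\eta$ for some $\eta>0$. Consider a minimum point $x_0$ of $u_\beta-v$. At $x_0$ we have $\ddc u_\beta\ge \ddc v$, so ellipticity gives
\[
e^{\beta(u_\beta-h)(x_0)}=f(\lambda[\chi+\ddc u_\beta])\ge f(\lambda[\chi+\ddc v])(x_0)\ge c_v>0.
\]
Therefore
\[
u_\beta(x_0)\ge h(x_0)+\frac{\log c_v}{\beta}\ge v(x_0)+\eta+\frac{\log c_v}{\beta}.
\]
For $\beta$ large enough (depending on $v$) the right side exceeds $v(x_0)$, so $\min(u_\beta-v)\ge 0$. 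This yields $\liminf_\beta u_\beta\ge v$ uniformly.

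\emph{Density step.} It remains to show that $P$ is a uniform limit of such $v$'s. The steps are as follows.

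First, I would argue that $P$ is continuous. Since $h$ is smooth and $\chi$ is strictly admissible, barriers of the form $h-C\,d(x,x_0)^2$ are not admissible in general. Instead I would use the upper bound just proved: $P\ge u_\beta-C_0/\beta$ holds trivially in the reverse direction, so continuity can be obtained a posteriori. Better, I would take the upper envelope $\overline u:=\limsup^* u_\beta$ and the lower envelope $\underline u:=\liminf_* u_\beta$, which are finite by Proposition~\ref{prop:penalized-c0}.

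Second, I would verify that $\underline u\ge P$. Take any competitor $w$ for $P$ and replace it by $(1-s)w+s(\min h-C)$, using $\chi$ and convexity of $\Gamma$ to make it strictly admissible and at most $h-\eta$. The remaining difficulty is that this competitor is only USC, not $C^2$.

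Third, to handle non-smooth competitors, I would redo the comparison above in the viscosity sense. At a minimum of $u_\beta-w$, the smooth function $u_\beta$ touches $w+c$ from above, so $\lambda[\chi+\ddc u_\beta]\in$ the strictly shifted cone at that point. More precisely, the strict competitor satisfies $\lambda[\chi+\ddc w]\in\Gamma+s\epsilon_2\one$ in the viscosity sense, where $\epsilon_2$ is from the proof of Proposition~\ref{prop:auxiliary-estimates1}. Then Lemma~\ref{lem:cone-translation}, or homogeneity, gives
\[
f(\lambda[\chi+\ddc u_\beta])\ge s\epsilon_2 f(\one)
\]
there, and the same computation as before yields $u_\beta\ge w_s-o(1)$. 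Since $w$ is USC, the minimum of $u_\beta-w$ is attained, so this step is legitimate.

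\emph{Conclusion.} Letting $s\to0$ and taking the supremum over $w$ gives $\underline u\ge P$. Together with the upper bound $\overline u\le P$, we obtain $P\le\underline u\le\overline u\le P$. Hence $P$ is continuous (as $\underline u$ is LSC and $\overline u$ is USC), and the convergence is uniform by the standard Barles–Perthame argument.

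The main obstacle is the third step: making the viscosity touching argument rigorous with a USC competitor. The point is that the touching is by the smooth $u_\beta$ from above of $w_s$, which fits Definition~\ref{d2.2} exactly.
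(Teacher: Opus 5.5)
Your proof is essentially the paper's. The upper bound uses the shifted penalized solution as a competitor. The lower bound strictifies an arbitrary competitor by a convex combination with a constant and compares at a minimum of $u_\beta-w_s$. That comparison is exactly the viscosity comparison the paper invokes, made explicit because $u_\beta$ is smooth. Two small repairs are needed.

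\emph{Upper bound constant.} The pointwise bound $u_\beta-h\le C_0/\beta$, with $C_0$ depending only on $(f,\omega,\chi)$, is not available. Proposition~\ref{prop:penalized-c0} as stated only controls $\max_M u_\beta$, and at a maximum of $u_\beta-h$ you only get $\ddc u_\beta\le\ddc h$. For example, on a flat torus with $\chi=\omega$ and $f(\lambda)=\frac1n\sum_i\lambda_i$ one has $C_0=0$. Then $u_\beta\le h$ would force $\Delta u_\beta\le0$, hence $u_\beta$ and then $h$ constant. Use Lemma~\ref{lem:penalized-rhs} instead, exactly as the paper does. It gives $u_\beta-h\le\beta^{-1}\log\max\{1,C(1+\|\partial\bar\partial h\|_\infty)\}$, and the $h$-dependence is harmless for fixed $h$.

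\emph{Relaxed limits.} Drop the detour through $\underline u,\overline u$ and Barles--Perthame. As written, a liminf argument yields only $\underline u\ge P_*$, the lower semicontinuous envelope of $P$, unless $P$ is already known to be lower semicontinuous. The fix is already in your hands: your threshold $\beta(s)$ does not depend on the competitor $w$. Taking the supremum over $w$, and using that $u_\beta$ is continuous to pass to the upper semicontinuous regularization, gives $(1-s)P+s(\min_M h-C)\le u_\beta\le P+C_h/\beta$ on $M$ for $\beta\ge\beta(s)$. This yields $\|u_\beta-P\|_\infty\to0$ directly, and continuity of $P$ comes for free. Choosing $s\approx 2(\log\beta)/\beta$, as the paper does, even gives the rate $C_h(\log\beta)/\beta$.
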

\begin{proof}
Let
\[
 M_h=C\bigl(1+\|\partial\bar\partial h\|_\infty\bigr),
 \qquad
 \eta_\beta=\frac1\beta\log\max\{1,M_h\}.
\]
The maximum-point argument in Lemma~\ref{lem:penalized-rhs} gives
\(u_{\beta,h}-h\leq\eta_\beta\).  Hence
\(u_{\beta,h}-\eta_\beta\) is an admissible competitor below \(h\), and
therefore
\begin{equation}
 u_{\beta,h}\leq P_{\Gamma,\chi}(h)+\eta_\beta.
 \label{eq:penalized-envelope-upper}
\end{equation}

For the opposite inequality, let \(v\leq h\) be any viscosity-admissible
competitor in the definition of the envelope, set
\(c_h=\inf_M h-1\), and define
\[
 v_\delta=(1-\delta)v+\delta c_h.
\]
At every upper test, the total Hessian of \(v_\delta\) is the convex
combination of the total Hessian of \(v\) and \(g^{-1}\chi\).  Thus
\(v_\delta\) is strictly admissible.  Concavity, homogeneity, and the
continuous extension of \(f\) to \(\overline\Gamma\) give, in the viscosity
sense,
\begin{equation}
 f(\lambda[\chi+\ddc v_\delta])
 \geq\delta c_\chi,
 \qquad
 c_\chi:=\min_Mf(\lambda(g^{-1}\chi))>0.
 \label{eq:strictified-competitor}
\end{equation}
Moreover \(v_\delta-h\leq-\delta\).  Consequently
\[
 f(\lambda[\chi+\ddc v_\delta])
 e^{-\beta(v_\delta-h)}
 \geq\delta c_\chi e^{\beta\delta}.
\]
The penalized solution satisfies the same left-hand side with value one.
If \(\delta c_\chi e^{\beta\delta}>1\), the viscosity comparison
principle gives \(v_\delta\leq u_{\beta,h}\).  Taking the supremum over
all competitors and then the upper-semicontinuous regularization yields
\begin{equation}
 (1-\delta)P_{\Gamma,\chi}(h)+\delta c_h
 \leq u_{\beta,h}.
 \label{eq:penalized-envelope-lower}
\end{equation}
For example, \(\delta_\beta=2(\log\beta)/\beta\) satisfies the required
strict inequality for all large \(\beta\).  Since the envelope is bounded
between a fixed constant competitor and \(h\), equations
\eqref{eq:penalized-envelope-upper}--\eqref{eq:penalized-envelope-lower}
give
\begin{equation}
 \|u_{\beta,h}-P_{\Gamma,\chi}(h)\|_\infty
 \leq C_h\frac{\log\beta}{\beta}\longrightarrow0.
 \label{eq:uniform-penalized-convergence}
\end{equation}
\end{proof}

\begin{proof}[Proof of Theorem~\ref{thm:penalized-c2}]
Combine Propositions~\ref{prop:penalized-c0},
\ref{prop:auxiliary-estimates1}, and~\ref{prop:auxiliary-estimates}.
\end{proof}

\begin{proof}[Proof of Corollary~\ref{cor:penalized-c1alpha}]
Lemma~\ref{lem:determinant-domination} verifies the determinant-domination
hypothesis in \cite[Theorem~3.1(1)]{ChengXu2025}, so the smooth penalized
solutions exist.  Proposition~\ref{prop:uniform-convergence} gives uniform convergence
to the envelope, and Theorem~\ref{thm:penalized-c2} gives uniform bounds for
$\nabla u_{\beta,h}$ and $\partial\bar\partial u_{\beta,h}$.
In particular,
\[
 \Delta_\omega^{\mathrm{Ch}}u_{\beta,h}
 =\operatorname{tr}_\omega(\ddc u_{\beta,h})
\]
is uniformly bounded.  The global $W^{2,p}$ estimate for the fixed
uniformly elliptic operator $\Delta_\omega^{\mathrm{Ch}}$, followed by
Sobolev embedding, gives precompactness in $C^{1,\alpha}$ for every
$\alpha<1$.  Every subsequential limit is the same uniform limit
$P_{\Gamma,\chi}(h)$, so the full family converges in $C^{1,\alpha}$.
\end{proof}

We now prove Corollary~\ref{cor:envelope-regularity}, assuming
Theorem~\ref{thm:canonical-operator}.
\begin{proof}[Proof of Corollary~\ref{cor:envelope-regularity}]
Let $f_{\Gamma}$ be the function given by
Theorem~\ref{thm:canonical-operator}. For each $\beta$, let $u_{\beta,h}$
solve
\begin{equation}
 f_{\Gamma}(\lambda[\chi+\ddc u_{\beta,h}])
 =e^{\beta(u_{\beta,h}-h)},
 \qquad \lambda[\chi+\ddc u_{\beta,h}]\in\Gamma,
\end{equation}
Combining Theorem~\ref{thm:penalized-c2} with
Corollary~\ref{cor:penalized-c1alpha} proves the result.
\end{proof}

\section{The canonical cone operator}

In this section we prove Theorem~\ref{thm:canonical-operator}.  Let $\Gamma$ be a
cone satisfying Assumption~\ref{ass1}(1).

\subsection{The trace-cone case}

When
\[
\Gamma= \Gamma_1=\left\{\lambda:\sum_i\lambda_i>0\right\},
\]
we define
\begin{equation}
 f_{\Gamma_1}(\lambda)=\frac{1}{n}\sum_i\lambda_i.
 \label{eq:trace-cone-operator}
\end{equation}
\begin{lemma}\label{lem:trace-cone}
$(\Gamma_1,f_{\Gamma_1})$ satisfies Assumptions~\ref{ass1}--\ref{ass3},
with determinant constant $n^{-n}$.
\end{lemma}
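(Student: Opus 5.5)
The plan is to verify each assumption directly, since $f_{\Gamma_1}(\lambda)=\frac1n\sum_i\lambda_i$ is linear and every item reduces to an explicit computation.

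\textbf{Assumption~\ref{ass1}.} Part (1) holds trivially, because $\Gamma_n\subset\Gamma_1\subset\Gamma_1$. For part (2) we record the elementary facts:
\begin{itemize}
\item $f_{\Gamma_1}$ is symmetric and smooth on $\Gamma_1$, and it extends continuously to $\overline{\Gamma_1}$;
\item $\partial f/\partial\lambda_i=1/n>0$;
\item a linear function is concave;
\item $f_{\Gamma_1}>0$ exactly on $\Gamma_1$ and $f_{\Gamma_1}=0$ on $\partial\Gamma_1=\{\sum_i\lambda_i=0\}$.
\end{itemize}

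\textbf{Assumption~\ref{ass2}.} Homogeneity of degree one is immediate from linearity.

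\textbf{Assumption~\ref{ass3}.} The key observation is that the spectral function is
\[
F(A)=f_{\Gamma_1}(\lambda(A))=\tfrac1n\tr A,
\]
since the sum of the eigenvalues of a Hermitian matrix equals its trace. Hence $F$ is linear in the matrix entries, and in any unitary frame
\[
\frac{\partial F}{\partial A_{i\bar j}}=\tfrac1n\delta_{ij}.
\]
This derivative matrix is independent of $A$, so its determinant equals $n^{-n}$ for every $A$, in particular whenever $\lambda(A)\in\Gamma_1$. This gives $\gamma_0=n^{-n}$.

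\textbf{Potential obstacle.} The only point needing care is the phrase ``in every unitary frame.'' The derivative of the trace with respect to the entries is the identity matrix, and this is invariant under unitary change of frame, so there is no frame dependence. The proof therefore has no genuine obstacle; it is a routine check of the three assumptions.
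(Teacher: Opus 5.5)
Your proposal is correct and takes the same route as the paper. The paper's proof simply notes that all assertions follow immediately from $(f_{\Gamma_1})_i=1/n$ and $f_{\Gamma_1}=0$ on $\partial\Gamma_1$. Your identification $F(A)=\frac1n\operatorname{tr}A$, whose linearization is the constant matrix $\frac1n\delta_{ij}$ with determinant $n^{-n}$ in any unitary frame, is exactly the computation behind Assumption~\ref{ass3}.
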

\begin{proof}
All assertions are immediate from
$(f_{\Gamma_1})_i=1/n$ and
$f_{\Gamma_1}=0$ on $\partial\Gamma_1$.
\end{proof}

\subsection{The proper-cone case}

The dual cone of $\overline{\Gamma}$ is
\[
\overline{\Gamma}^{\,*}=\{y\in\R^n:\langle x,y\rangle\geq0
      \text{ for every }x\in\overline{\Gamma}\}.
\]
Because $\Gamma_n\subset\Gamma$, one has
\begin{equation}\label{eq:dual-positive-orthant}
 \overline{\Gamma}^{\,*}\subset\overline{\Gamma_n}.
\end{equation}
Since $\Gamma \neq \Gamma_1$ and $\Gamma$ is symmetric, its dual cone has nonempty interior.

The Koszul--Vinberg characteristic function is
\[
\Phi_{\overline{\Gamma}}(\lambda)=\int_{\overline{\Gamma}^{\,*}}e^{-\langle\lambda,y\rangle}\,dy,
\qquad \lambda\in\Gamma.
\]
After fixing $e=(1,\ldots,1)$ as the normalization point, define
\begin{equation}
f_\Gamma(\lambda)
=\left(\frac{\Phi_{\overline{\Gamma}}(e)}{\Phi_{\overline{\Gamma}}(\lambda)}\right)^{1/n}.
\label{eq:characteristic-operator}
\end{equation}
For every compact $L\Subset\Gamma$ there is $a_L>0$ such that
$\langle\lambda,y\rangle\geq a_L|y|$ for
$\lambda\in L$ and $y\in\overline{\Gamma}^{\,*}$.  Hence the integral is
finite and all of its derivatives converge locally uniformly on $\Gamma$.

Set
\[
H=\{p\in\R^n:\langle p,e\rangle=1\},
\qquad
K=\overline{\Gamma}^{\,*}\cap H,
\]
and let $d\sigma$ denote Euclidean $(n-1)$-dimensional Hausdorff measure
on $H$. Since $\overline{\Gamma}^*$ has nonempty interior,  $K$
is a compact convex body of positive $(n-1)$-dimensional measure.

\begin{proposition}
For every $\lambda\in\Gamma$,
\begin{equation}\label{eq:phi-slice}
\Phi_{\overline{\Gamma}}(\lambda)
=\frac{\Gamma(n)}{\sqrt n}
 \int_K\langle p,\lambda\rangle^{-n}\,d\sigma(p).
\end{equation}
Consequently, if
\begin{equation}\label{eq:I-definition}
I(\lambda)=\int_K\langle p,\lambda\rangle^{-n}\,d\sigma(p),
\end{equation}
then
\begin{equation}\label{eq:f-I-representation}
f_\Gamma(\lambda)
=\left(\frac{I(e)}{I(\lambda)}\right)^{1/n}.
\end{equation}
Moreover, $I(e)=\sigma(K)$.
\end{proposition}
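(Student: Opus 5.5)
The plan is to compute $\Phi_{\overline\Gamma}(\lambda)$ in ``conical polar coordinates'' adapted to the hyperplane $H$. Every nonzero $y\in\overline{\Gamma}^{\,*}$ satisfies $\langle y,e\rangle>0$. Indeed, $\overline{\Gamma}^{\,*}\subset\overline{\Gamma_n}$ by \eqref{eq:dual-positive-orthant}, so $\langle y,e\rangle=\sum_i y_i\geq 0$, with equality only if $y=0$. Hence the map
\[
(0,\infty)\times K\to\overline{\Gamma}^{\,*}\setminus\{0\},\qquad (s,p)\mapsto sp,
\]
is a bijection, with inverse $y\mapsto(\langle y,e\rangle,\,y/\langle y,e\rangle)$. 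I will compute its Jacobian. Decompose $y=sp$ with $p\in H$. The differential sends $\partial_s$ to $p$, and sends tangent vectors $v\in T_pH=e^{\perp}$ to $sv$. The $(n-1)$-dimensional part therefore contributes $s^{n-1}\,d\sigma(p)$. The radial vector $p$ contributes only through its component normal to $H$, namely $\langle p,e\rangle/|e|=1/\sqrt n$. Thus
\[
dy=\frac{s^{n-1}}{\sqrt n}\,ds\,d\sigma(p).
\]
One can check this cleanly by writing $p=e/n+w$ with $w\in e^\perp$, so that $y=(s/\sqrt n)\,(e/\sqrt n)+sw$ is an orthogonal decomposition.

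With this change of variables and Fubini/Tonelli (all integrands are nonnegative), for $\lambda\in\Gamma$ I get
\[
\Phi_{\overline\Gamma}(\lambda)=\frac1{\sqrt n}\int_K\int_0^\infty s^{n-1}e^{-s\langle p,\lambda\rangle}\,ds\,d\sigma(p).
\]
To evaluate the inner integral I need $\langle p,\lambda\rangle>0$ for $p\in K$ and $\lambda\in\Gamma$. This holds because $p\in\overline\Gamma^{\,*}\setminus\{0\}$ and $\lambda$ lies in the interior of $\overline\Gamma$: a small ball around $\lambda$ lies in $\overline\Gamma$, so if $\langle p,\lambda\rangle=0$ then $\langle p,\lambda-\epsilon p\rangle<0$, contradicting $p\in\overline\Gamma^{\,*}$. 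This positivity argument is the only point needing real care, and it is the step I expect to be the main (though mild) obstacle. The inner integral is then $\Gamma(n)\langle p,\lambda\rangle^{-n}$, which gives \eqref{eq:phi-slice}. Finiteness follows from the compactness of $K$, since $\langle p,\lambda\rangle$ is bounded below on $K$; this is consistent with the earlier remark that $\Phi_{\overline\Gamma}$ is finite on $\Gamma$.

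Formula \eqref{eq:f-I-representation} then follows immediately. In the ratio $\Phi_{\overline\Gamma}(e)/\Phi_{\overline\Gamma}(\lambda)$ the constant $\Gamma(n)/\sqrt n$ cancels, leaving $I(e)/I(\lambda)$, and taking the $1/n$-th power gives the claim. Finally, $\langle p,e\rangle=1$ for every $p\in K\subset H$, so $I(e)=\int_K 1\,d\sigma=\sigma(K)$. This is positive and finite, since $K$ is a compact convex body of positive $(n-1)$-dimensional measure.
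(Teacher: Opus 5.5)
Your proposal is correct and follows the paper's proof: the same conical coordinates $y=rp$ over $K$, the same Jacobian $r^{n-1}/\sqrt n$, the same Gamma-integral evaluation, and the same cancellation of the constant, with $I(e)=\sigma(K)$ coming from $\langle p,e\rangle=1$. The only difference is minor: you derive $\langle y,e\rangle>0$ from $\overline{\Gamma}^{\,*}\subset\overline{\Gamma_n}$, whereas the paper uses $e\in\operatorname{Int}\overline{\Gamma}$, and both arguments are valid.
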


\begin{proof}
First observe that
\[
\langle e,y\rangle>0
\qquad\text{for every }y\in \overline{\Gamma}^{\,*}\setminus\{0\}.
\]
Indeed, if $\langle e,y\rangle=0$ for some nonzero $y\in \overline{\Gamma}^{\,*}$, then,
because $e\in\operatorname{Int}\overline{\Gamma}$, one has
$e-ty\in\overline{\Gamma}$ for all sufficiently small $t>0$.  This would give
\[
\langle e-ty,y\rangle=-t|y|^2<0,
\]
contradicting $y\in \overline{\Gamma}^{\,*}$.

For $y\in \overline{\Gamma}^{\,*}\setminus\{0\}$, define
\[
r=\langle e,y\rangle,
\qquad
p=\frac{y}{\langle e,y\rangle}.
\]
Then $r>0$, $p\in K$, and $y=rp$.  Conversely, $rp\in \overline{\Gamma}^{\,*}$ whenever
$r>0$ and $p\in K$.  Thus the map
\[
F:(0,\infty)\times K\longrightarrow \overline{\Gamma}^{\,*}\setminus\{0\},
\qquad F(r,p)=rp,
\]
is one-to-one and onto.

We now compute its Jacobian.  Let
$\tau_1,\ldots,\tau_{n-1}$ be an orthonormal basis of the tangent space
of $H$, which is $e^\perp$.  Then
\[
\partial_rF=p,
\qquad
D_{\tau_j}F=r\tau_j.
\]
The unit normal to $H$ is $N=e/|e|$.  Therefore
\begin{align*}
J_F(r,p)
&=\left|\det[p,r\tau_1,\ldots,r\tau_{n-1}]\right| \\
&=r^{n-1}|\langle p,N\rangle| \\
&=\frac{r^{n-1}}{|e|}
=\frac{r^{n-1}}{\sqrt n},
\end{align*}
where we used $\langle p,e\rangle=1$ and $|e|=\sqrt n$.
Consequently,
\begin{equation}\label{eq:cone-volume-element}
dy=\frac{r^{n-1}}{\sqrt n}\,dr\,d\sigma(p).
\end{equation}

Substituting $y=rp$ into the definition of $\Phi_{\overline{\Gamma}}$, we obtain
\begin{align*}
\Phi_{\overline{\Gamma}}(\lambda)
&=\frac1{\sqrt n}
  \int_K\int_0^\infty
  e^{-r\langle p,\lambda\rangle}r^{n-1}\,dr\,d\sigma(p).
\end{align*}
Since $\lambda\in\operatorname{Int}\overline{\Gamma}$ and
$p\in \overline{\Gamma}^{\,*}\setminus\{0\}$, one has
$\langle p,\lambda\rangle>0$.  The Gamma-integral identity
\[
\int_0^\infty e^{-ar}r^{n-1}\,dr
=a^{-n}\Gamma(n),
\qquad a>0,
\]
therefore gives
\[
\Phi_{\overline{\Gamma}}(\lambda)
=\frac{\Gamma(n)}{\sqrt n}
 \int_K\langle p,\lambda\rangle^{-n}\,d\sigma(p),
\]
which proves \eqref{eq:phi-slice}.

The same dimensional constant occurs at $\lambda=e$, so it cancels in the
quotient.  More explicitly,
\[
\frac{\Phi_{\overline{\Gamma}}(e)}{\Phi_{\overline{\Gamma}}(\lambda)}
=\frac{I(e)}{I(\lambda)}.
\]
Hence \eqref{eq:f-I-representation} follows.  Finally,
$\langle p,e\rangle=1$ on $K$, and therefore
\[
I(e)=\int_K1\,d\sigma=\sigma(K).
\]
\end{proof}

\begin{remark}
If
\[
d\mu=\frac{d\sigma}{\sigma(K)},
\]
then \eqref{eq:f-I-representation} becomes
\[
f_\Gamma(\lambda)
=\left(
  \int_K\langle p,\lambda\rangle^{-n}\,d\mu(p)
 \right)^{-1/n}.
\]
The exponent $n$ comes from the factor $r^{n-1}$ in the
$n$-dimensional radial Jacobian.
\end{remark}

\begin{lemma}\label{lem:canonical-concavity}
$f_{\Gamma}$ is concave and homogeneous of degree one.
\end{lemma}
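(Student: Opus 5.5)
Homogeneity will come straight from the slice representation \eqref{eq:f-I-representation}. For $s>0$ and $\lambda\in\Gamma$ we have $\langle p,s\lambda\rangle^{-n}=s^{-n}\langle p,\lambda\rangle^{-n}$, so $I(s\lambda)=s^{-n}I(\lambda)$ and therefore $f_\Gamma(s\lambda)=sf_\Gamma(\lambda)$. (The same follows from $\Phi_{\overline\Gamma}(s\lambda)=s^{-n}\Phi_{\overline\Gamma}(\lambda)$ via the substitution $y\mapsto y/s$.)

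For concavity I will use the normalized measure $d\mu=d\sigma/\sigma(K)$ on $K$, so that
\[
f_\Gamma(\lambda)=\Bigl(\int_K\langle p,\lambda\rangle^{-n}\,d\mu(p)\Bigr)^{-1/n}.
\]
For fixed $p\in K$ the map $\lambda\mapsto\langle p,\lambda\rangle$ is linear and strictly positive on $\Gamma$, by the positivity shown in the proof of \eqref{eq:phi-slice}. The map $\lambda\mapsto(\int_K\langle p,\lambda\rangle^{-n}d\mu)^{-1/n}$ is then the composition of the power mean $M_{-n}(a)=(\int a^{-n}\,d\mu)^{-1/n}$ with the linear map $\lambda\mapsto(\langle p,\lambda\rangle)_{p\in K}$, which takes values in positive functions on $K$. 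So it suffices to show that $M_{-n}$ is concave on positive functions. Since $M_{-n}$ is positively homogeneous of degree one, concavity is equivalent to superadditivity:
\[
M_{-n}(a+b)\ge M_{-n}(a)+M_{-n}(b).
\]
This is the reverse Minkowski inequality for exponent $-n<1$.

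I will prove this inequality directly, via the duality
\[
M_{-n}(a)=\inf\Bigl\{\int_K a\,w\,d\mu:\ w>0,\ \int_K w^{-1/(n+1)}\cdot(\text{suitable normalization})\Bigr\},
\]
or, more cleanly, by the elementary reverse Hölder route. Write $q=-n$ and let $a,b>0$ be given. Applying Hölder's inequality with exponents $r=(n+1)/n$ and $n+1$ gives
\[
\int a^{-n}\frac{(a+b)}{(a+b)}\,d\mu\ \ldots
\]
In practice I will argue more simply: the function $\Psi(a)=\int a^{-n}\,d\mu$ is convex and decreasing, and $M_{-n}=\Psi^{-1/n}$ is $1$-homogeneous. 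A $1$-homogeneous positive function whose superlevel sets $\{M_{-n}\ge1\}=\{\Psi\le1\}$ are convex is concave. Convexity of these sets follows from convexity of $t\mapsto t^{-n}$ on $(0,\infty)$. The standard argument is this: given $a,b$ with $M(a)=\alpha$ and $M(b)=\beta$, we have $a/\alpha,\,b/\beta\in\{\Psi\le1\}$, hence $\frac{a+b}{\alpha+\beta}=\frac{\alpha}{\alpha+\beta}\frac a\alpha+\frac{\beta}{\alpha+\beta}\frac b\beta$ also lies in $\{\Psi\le1\}$, which gives $M(a+b)\ge\alpha+\beta$.

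Pulling this back through the linear map yields concavity of $f_\Gamma$ on $\Gamma$. The only point requiring care is that $\langle p,\lambda\rangle>0$ for all $p\in K$ and $\lambda\in\Gamma$, so that $\Psi$ is finite; this is exactly the compactness bound $\langle\lambda,y\rangle\ge a_L|y|$ noted after \eqref{eq:characteristic-operator}. I expect no serious obstacle. The one step to write carefully is the lemma that a positively $1$-homogeneous function with convex upper level set is concave, and that is the standard argument just given.
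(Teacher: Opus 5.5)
Your argument is correct. Its last step is the same normalization argument the paper uses: a positively $1$-homogeneous function whose superlevel set $\{f_\Gamma\ge 1\}$ is convex is superadditive, hence concave. The difference is in how you obtain that convex superlevel set.

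The paper does not use the slice formula here. It applies H\"older's inequality directly to the exponential integral, which shows that $\log\Phi_{\overline\Gamma}$ is convex. Thus $f_\Gamma$ is in fact log-concave, and concavity follows from the normalization argument.

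You instead go through the slice representation. You write $f_\Gamma$ as the $(-n)$-power mean, with respect to $\sigma/\sigma(K)$, of the positive linear forms $\lambda\mapsto\langle p,\lambda\rangle$, $p\in K$, and you use only the convexity of $t\mapsto t^{-n}$. This makes it transparent that concavity of $f_\Gamma$ is a reverse Minkowski inequality. The cost is that the argument depends on the preceding slice proposition and on the positivity $\langle p,\lambda\rangle>0$ for $p\in K$, $\lambda\in\Gamma$, which you correctly check.

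The slicing is in fact optional. You only need quasi-convexity of $\Phi_{\overline\Gamma}$, and even plain convexity is immediate from the original integral, since $\lambda\mapsto e^{-\langle\lambda,y\rangle}$ is convex for each fixed $y$.

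Finally, delete the abandoned fragments: the ``duality'' formula with a ``suitable normalization'' and the unfinished H\"older display. As written they are not meaningful statements, and the level-set argument you settle on stands on its own.
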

\begin{proof}
The change of variables $y'=ty$ gives
$\Phi_{\overline\Gamma}(t\lambda)=t^{-n}
\Phi_{\overline\Gamma}(\lambda)$, so $f_\Gamma$ is homogeneous of
degree one.  A positive, degree-one homogeneous, log-concave function is
concave.
Indeed, write
\[
a=f_\Gamma(\lambda),
\qquad
b=f_\Gamma(\mu),
\qquad
L=(1-\theta)a+\theta b,
\]
and set
\[
\alpha=\frac{(1-\theta)a}{L},
\qquad
\beta=\frac{\theta b}{L}.
\]
Then $\alpha+\beta=1$ and
\[
\frac{(1-\theta)\lambda+\theta\mu}{L}
=\alpha\frac{\lambda}{a}+\beta\frac{\mu}{b}.
\]
Homogeneity gives
\[
f_\Gamma\left(\frac{\lambda}{a}\right)
=f_\Gamma\left(\frac{\mu}{b}\right)=1.
\]
Log-concavity therefore implies
\[
f_\Gamma\left(
 \frac{(1-\theta)\lambda+\theta\mu}{L}
\right)\geq1.
\]
Using degree-one homogeneity once more, we conclude that
\[
f_\Gamma((1-\theta)\lambda+\theta\mu)
\geq(1-\theta)f_\Gamma(\lambda)+\theta f_\Gamma(\mu).
\]

It remains to prove that $f_{\Gamma}$ is log-concave.
For $0<\theta<1$, H\"older's inequality gives
\begin{align*}
\Phi_{\overline{\Gamma}}((1-\theta)\lambda+\theta\mu)
&=\int_{\overline{\Gamma}^{\,*}}
  \left(e^{-\langle\lambda,y\rangle}\right)^{1-\theta}
  \left(e^{-\langle\mu,y\rangle}\right)^\theta\,dy \\
&\leq
  \Phi_{\overline{\Gamma}}(\lambda)^{1-\theta}\Phi_{\overline{\Gamma}}(\mu)^\theta.
\end{align*}
Hence $\log\Phi_{\overline{\Gamma}}$ is convex, and therefore
\[
\log f_\Gamma
=\frac1n\log\Phi_{\overline{\Gamma}}(e)-\frac1n\log\Phi_{\overline{\Gamma}}
\]
is concave.  Thus $f_\Gamma$ is log-concave, completing the proof.
\end{proof}

\begin{lemma}\label{lem:canonical-ellipticity}
$f_{\Gamma}$ satisfies:
\begin{equation*}
(f_{\Gamma})_i(\lambda)>0 \text{ on } \Gamma,\,\,\,f_{\Gamma}|_{\partial \Gamma}=0.
\end{equation*}
\end{lemma}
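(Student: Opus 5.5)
Both properties come from the slice representation \eqref{eq:f-I-representation}, $f_\Gamma=(I(e)/I(\lambda))^{1/n}$ with $I(\lambda)=\int_K\langle p,\lambda\rangle^{-n}\,d\sigma(p)$. On $\Gamma$ we may differentiate under the integral, since $\langle p,\lambda\rangle\ge a_L>0$ uniformly for $\lambda$ in a compact $L\Subset\Gamma$ and $p\in K$. This gives
\[
 \partial_i I(\lambda)=-n\int_K p_i\,\langle p,\lambda\rangle^{-n-1}\,d\sigma(p),
 \qquad
 (f_\Gamma)_i=\frac{1}{n}\,I(e)^{1/n}\,I(\lambda)^{-1/n-1}\,\bigl(-\partial_iI(\lambda)\bigr).
\]
So positivity of $(f_\Gamma)_i$ reduces to $\int_K p_i\langle p,\lambda\rangle^{-n-1}d\sigma>0$. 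By \eqref{eq:dual-positive-orthant}, $K\subset\overline{\Gamma_n}$, so $p_i\ge0$ on $K$ and the integrand is nonnegative. It remains to see that $\{p\in K:p_i>0\}$ has positive $\sigma$-measure. If instead $p_i=0$ $\sigma$-a.e.\ on $K$, then, since $K$ is a convex body in $H$ with nonempty relative interior, $K$ would lie in the hyperplane $\{p_i=0\}$. Then $\overline\Gamma^{\,*}$ would lie in $\{y_i=0\}$, contradicting that $\overline\Gamma^{\,*}$ has nonempty interior. Hence $(f_\Gamma)_i>0$. Alternatively, symmetry of $\Gamma$ lets us permute coordinates, so it suffices that some coordinate is not a.e.\ zero, which is clear because $\langle p,e\rangle=1$.

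For the boundary behaviour, $f_\Gamma$ is defined only on $\Gamma$, so "$f_\Gamma|_{\partial\Gamma}=0$" means $f_\Gamma(\lambda)\to0$ as $\lambda\to\lambda_0\in\partial\Gamma$, i.e.\ $I(\lambda)\to\infty$. Extending $f_\Gamma$ by zero then gives a function in $C(\overline\Gamma)$; continuity at points of $\partial\Gamma$ is exactly this limit statement, and local uniform boundedness is not an issue because the limit is $0$. Fix $\lambda_0\in\partial\Gamma$. Since $\overline\Gamma$ is a closed convex cone and $\lambda_0$ is a boundary point, there is a supporting hyperplane: a nonzero $y_0\in\overline\Gamma^{\,*}$ with $\langle\lambda_0,y_0\rangle=0$. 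Normalize $p_0=y_0/\langle e,y_0\rangle\in K$, which is legitimate by the positivity of $\langle e,y\rangle$ shown in the proof of \eqref{eq:phi-slice}. Then the linear function $p\mapsto\langle p,\lambda_0\rangle$ is nonnegative on $K$ and vanishes at $p_0$.

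The key estimate, and the step I expect to be the main obstacle, is that $\int_K\langle p,\lambda_0\rangle^{-n}d\sigma=\infty$. By Fatou's lemma, since $\langle p,\lambda\rangle\to\langle p,\lambda_0\rangle$ pointwise, this gives $\liminf I(\lambda)=\infty$. Here $K$ is $(n-1)$-dimensional, and $\ell(p)=\langle p,\lambda_0\rangle$ is an affine function on $H$, nonnegative on the convex body $K$, vanishing at $p_0$, and Lipschitz.
\begin{itemize}
\item If $\ell$ is constant on $H$, then $\ell\equiv0$ on $K$ and $I(\lambda)\to\infty$ trivially.
\item Otherwise, use convexity. Fix an interior point $q$ of $K$ (relative to $H$) with $r>0$ such that the relative ball $B_r(q)\subset K$. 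The cone $\{p_0+s(x-p_0): x\in B_r(q),\,0\le s\le 1\}$ lies in $K$. On it, $\ell(p_0+s(x-p_0))=s\,\ell(x)\le Cs$, and the measure of the slab $\{s\in[\varepsilon,2\varepsilon]\}$ is comparable to $\varepsilon^{n-1}$. Summing over dyadic $\varepsilon$, each slab contributes at least $c\,\varepsilon^{n-1}(C\varepsilon)^{-n}\sim c'/\varepsilon$. The sum diverges, so the integral is infinite.
\end{itemize}

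This yields $f_\Gamma(\lambda)\to0$ as $\lambda\to\lambda_0$, and together with the positivity of the partial derivatives it completes the proof.
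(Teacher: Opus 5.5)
Your proof is correct. It follows the same two-part strategy as the paper: positivity of $(f_\Gamma)_i$ from $\overline\Gamma^{\,*}\subset\overline{\Gamma_n}$ plus full-dimensionality, and boundary vanishing from a supporting vector $y_0$. The difference is that you run both parts in the slice picture $I(\lambda)$ rather than on $\Phi_{\overline\Gamma}$ over the full dual cone.

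For the derivative, the two computations are the same up to the change of variables $y=rp$. The boundary argument differs in execution:
\begin{itemize}
\item The paper places a half-cylinder $sy_0+D$, $s\ge0$, inside $\overline\Gamma^{\,*}$ and integrates in $s$. This gives the explicit bound $\Phi_{\overline\Gamma}(\lambda_j)\ge c'/\langle\lambda_j,y_0\rangle$ with no limit theorem, and even a rate $f_\Gamma(\lambda)\lesssim\langle\lambda,y_0\rangle^{1/n}$ near $\lambda_0$.
\item You reduce to the geometric fact that $\int_K\ell^{-n}\,d\sigma=\infty$ when the affine function $\ell\ge0$ vanishes at a point $p_0$ of the $(n-1)$-dimensional body $K$, and then pass to the limit with Fatou. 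This is only qualitative. Its advantage is that it shows the blow-up is just the non-integrability of $|p-p_0|^{-n}$ in dimension $n-1$. Indeed, the radial projection of the paper's half-cylinder onto $K$ is essentially your cone with vertex $p_0$.
\end{itemize}

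One detail needs tightening. The dyadic slabs $\{s\in[\varepsilon,2\varepsilon]\}$ are not automatically disjoint, because the map $(s,x)\mapsto p_0+s(x-p_0)$ is not injective. Before summing their contributions you should note bounded overlap. Shrink $r$ so that $\overline{B_r(q)}$ lies in the relative interior of $K$. Then $d_-=\operatorname{dist}(p_0,B_r(q))>0$, every point of the $\varepsilon$-slab satisfies $\varepsilon d_-\le|p-p_0|\le 2\varepsilon d_+$, and each point lies in boundedly many dyadic slabs.

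A cleaner alternative is to use $\ell(p)=\langle p-p_0,\lambda_0\rangle\le|\lambda_0|\,|p-p_0|$ together with polar coordinates about $p_0$ in $H$ over an open set of directions contained in your cone. This gives a lower bound of the form $c\int_0^{\rho_0}\rho^{-n}\rho^{n-2}\,d\rho=\infty$ directly. This uses $n\ge2$, which holds automatically in the proper-cone case.
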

\begin{proof}
The local convergence noted after \eqref{eq:characteristic-operator}
shows that $\Phi_{\overline\Gamma}$ and $f_\Gamma$ are smooth and
positive in $\Gamma$.  Differentiating
\eqref{eq:characteristic-operator}, we obtain
\begin{equation}
 (f_{\Gamma})_i(\lambda)
 =\frac{f_{\Gamma}(\lambda)}{n\Phi_{\overline{\Gamma}}(\lambda)}
   \int_{\overline{\Gamma}^{\,*}}y_i e^{-\lambda\cdot y}\,dy.
 \label{eq:characteristic-first-derivative}
\end{equation}
By \eqref{eq:dual-positive-orthant}, $y_i\geq0$ on
$\overline{\Gamma}^{\,*}$, and therefore
\[
 (f_{\Gamma})_i\geq0.
\]
In fact, since \(\overline{\Gamma}^{\,*}\) is full-dimensional, \(y_i>0\) on a set of positive
measure, and consequently \((f_{\Gamma})_i>0\).

It remains to check the boundary value.  Suppose
$\lambda_j\to\lambda_0\in\partial\Gamma$.  By separation, there is
$0\neq y_0\in\overline{\Gamma}^{\,*}$ such that
\[
 \lambda_0\cdot y_0=0.
\]
Choose $q\in\operatorname{Int}\overline{\Gamma}^{\,*}$ and a small
$(n-1)$-dimensional disk $D\subset q+y_0^\perp$ such that
$D\subset\operatorname{Int}\overline{\Gamma}^{\,*}$.  Convexity of the
cone implies $sy_0+D\subset\overline{\Gamma}^{\,*}$ for $s\geq0$.
The map $(s,z)\mapsto sy_0+z$ has a fixed positive Jacobian, and hence
\[
 \Phi_{\overline{\Gamma}}(\lambda_j)
 \geq c\int_D\int_0^\infty
 e^{-\lambda_j\cdot(sy_0+z)}\,ds\,d\sigma_D(z)
 \geq\frac{c'}{\lambda_j\cdot y_0}
 \longrightarrow+\infty.
\]
This is the boundary blow-up of the classical characteristic barrier; see
also \cite{Guler1996}.
Consequently,
\begin{equation}\label{eq:characteristic-boundary-zero}
 f_{\Gamma}(\lambda_j)\longrightarrow0.
\end{equation}
Symmetry follows from the permutation
invariance of \(\overline{\Gamma}^{\,*}\) and Lebesgue measure.
\end{proof}

\begin{lemma}\label{lem:canonical-structure}
Let $F_\Gamma(A):=f_\Gamma(\lambda(A))$. There is $\gamma_\Gamma>0$
such that its spectral linearization satisfies
\begin{equation}\label{eq:canonical-det-linearization}
 \det DF_\Gamma(A)\geq\gamma_\Gamma
 \qquad\text{for all }\lambda(A)\in\Gamma.
\end{equation}
\end{lemma}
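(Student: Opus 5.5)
The plan is to reduce the matrix statement to a lower bound on $\prod_i (f_\Gamma)_i(\lambda)$, and then to prove that bound with a probabilistic (Markov-type) argument for the density $e^{-\langle\lambda,y\rangle}$ on $\overline\Gamma^{\,*}$. The trace-cone case $\Gamma=\Gamma_1$ is already covered by Lemma~\ref{lem:trace-cone}, so I assume $\Gamma\neq\Gamma_1$ and use the Koszul--Vinberg construction \eqref{eq:characteristic-operator}.

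\textbf{Reduction.} For a spectral function, if $A=U\operatorname{diag}(\lambda)U^*$ with $U$ unitary, then $DF_\Gamma(A)=U\operatorname{diag}(f_1(\lambda),\dots,f_n(\lambda))U^*$. This is the standard derivative formula for symmetric spectral functions; in a unitary frame, conjugating by $U$ does not change the determinant. Hence $\det DF_\Gamma(A)=\prod_i (f_\Gamma)_i(\lambda)$, exactly as in the proof of Lemma~\ref{lem:determinant-domination}. It therefore suffices to find $\gamma_\Gamma>0$ with $\prod_i(f_\Gamma)_i(\lambda)\ge\gamma_\Gamma$ for all $\lambda\in\Gamma$.

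\textbf{Rewriting the product.} Put $m_i(\lambda)=\int_{\overline\Gamma^{\,*}}y_ie^{-\langle\lambda,y\rangle}\,dy$ and $x_i=m_i/\Phi_{\overline\Gamma}(\lambda)$. Then $x$ is the mean of the probability density $p_\lambda(y)=e^{-\langle\lambda,y\rangle}/\Phi_{\overline\Gamma}(\lambda)$ on $\overline\Gamma^{\,*}$. By \eqref{eq:characteristic-first-derivative}, $(f_\Gamma)_i=f_\Gamma x_i/n$. Since $f_\Gamma^n=\Phi_{\overline\Gamma}(e)/\Phi_{\overline\Gamma}(\lambda)$, this gives
\[
\prod_i (f_\Gamma)_i(\lambda)=\frac{\Phi_{\overline\Gamma}(e)}{n^n}\cdot\frac{\prod_i x_i}{\Phi_{\overline\Gamma}(\lambda)} .
\]
Both $\prod_i x_i$ and $\Phi_{\overline\Gamma}$ are homogeneous of degree $-n$, so the target inequality $\prod_i x_i\ge c_n\Phi_{\overline\Gamma}(\lambda)$ is scale invariant. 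This inequality is the main step.

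\textbf{Main step.} Two facts are used.
\begin{enumerate}
\item By \eqref{eq:dual-positive-orthant}, $y_i\ge0$ on $\overline\Gamma^{\,*}$. Markov's inequality then gives $p_\lambda$-probability $\le 1/(2n)$ for each event $\{y_i>2nx_i\}$. Hence the box $Q=\prod_i[0,2nx_i]$ satisfies $\int_{Q\cap\overline\Gamma^{\,*}}p_\lambda\,dy\ge\tfrac12$.
\item Since $\lambda\in\Gamma\subset\overline\Gamma$ and $y\in\overline\Gamma^{\,*}$, we have $\langle\lambda,y\rangle\ge0$. So $p_\lambda\le 1/\Phi_{\overline\Gamma}(\lambda)$ everywhere on the cone.
\end{enumerate}
Combining the two,
\[
\tfrac12\le \frac{|Q|}{\Phi_{\overline\Gamma}(\lambda)}=\frac{(2n)^n\prod_i x_i}{\Phi_{\overline\Gamma}(\lambda)},
\]
which is the desired bound. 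Note that $x_i>0$ by Lemma~\ref{lem:canonical-ellipticity}.

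\textbf{Conclusion.} Substituting back yields
\[
\prod_i(f_\Gamma)_i(\lambda)\ge\gamma_\Gamma:=\frac{\Phi_{\overline\Gamma}(e)}{2\,n^n(2n)^n}>0,
\]
uniformly on $\Gamma$, and this proves \eqref{eq:canonical-det-linearization}. I expected the obstacle to be the lack of control as $\lambda$ approaches $\partial\Gamma$ or infinity, where $\Phi_{\overline\Gamma}$ blows up. The scale-invariant comparison through the mean $x$ avoids this, because only positivity of $y$ and the trivial upper bound on the density are used. The only remaining check is the spectral-derivative formula at matrices with repeated eigenvalues. This follows by continuity of $DF_\Gamma$, since $f_\Gamma$ is smooth and symmetric.
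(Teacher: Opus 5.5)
Your proof is correct, and it takes a different route from the paper's. The paper never works with the mean vector $x$. It first compares $\overline\Gamma^{\,*}$ with the orthant to get $\Phi_{\overline\Gamma}(\lambda)\le(\lambda_1\cdots\lambda_n)^{-1}$ on $\Gamma_n$. This is the determinant-majorization inequality $f_\Gamma(\lambda)\ge\Phi_{\overline\Gamma}(e)^{1/n}(\lambda_1\cdots\lambda_n)^{1/n}$. The paper then cites \cite[Lemma~4]{GuoPhongTong2023}, which turns majorization into a lower bound on $\prod_i(f_\Gamma)_i$ using concavity and degree-one homogeneity: test $\sum_i(f_\Gamma)_i(\lambda)\mu_i\ge f_\Gamma(\mu)$ at $\mu_i=1/(f_\Gamma)_i(\lambda)$.

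\textbf{What the paper's route buys.} It depends on Lemma~\ref{lem:canonical-concavity} and an external lemma. In return it gives the sharper constant $\gamma_\Gamma=\Phi_{\overline\Gamma}(e)/n^n$. It also shows a general principle: for concave degree-one operators, determinant majorization alone already implies the determinant condition.

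\textbf{What your route buys.} Your Markov-inequality argument uses only $\overline\Gamma^{\,*}\subset\overline{\Gamma_n}$ and $\langle\lambda,y\rangle\ge0$. It needs neither concavity nor the cited lemma, so it is fully self-contained. The price is a dimensional loss of $2(2n)^n$ in $\gamma_\Gamma$, which does not matter here.

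\textbf{Recovering the sharp constant.} In your notation, the paper's bound is $\prod_i x_i\ge\Phi_{\overline\Gamma}(\lambda)$. You can get it inside your own framework. Write the tangent-line inequality for the convex function $\log\Phi_{\overline\Gamma}$ at $\lambda$, whose gradient is $-x$. Evaluate it at $\mu=(1/x_1,\dots,1/x_n)\in\Gamma_n$. Use $\langle x,\lambda\rangle=n$, which is Euler's relation for degree $-n$ homogeneity. This gives $\Phi_{\overline\Gamma}(\mu)\ge\Phi_{\overline\Gamma}(\lambda)$. Combine it with the orthant bound $\Phi_{\overline\Gamma}(\mu)\le\prod_i x_i$.

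Your remark on repeated eigenvalues is fine. For a $C^1$ symmetric $f$, the formula $DF_\Gamma(A)=U\operatorname{diag}((f_\Gamma)_i)U^*$ holds at every $A$, so the continuity argument is not even needed.
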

\begin{proof}
For $\lambda\in\Gamma_n$, equation
\eqref{eq:dual-positive-orthant} gives
\[
 \Phi_{\overline{\Gamma}}(\lambda)
 \leq\int_{\R_+^n}e^{-\lambda\cdot y}\,dy
 =\frac{1}{\lambda_1\cdots\lambda_n}.
\]
It follows that $f_{\Gamma}$ satisfies the determinant majorization condition:
\begin{equation}
 f_{\Gamma}(\lambda)
 \geq\Phi_{\overline{\Gamma}}(e)^{1/n}
       (\lambda_1\cdots\lambda_n)^{1/n},
 \qquad \lambda\in\Gamma_n.
 \label{eq:canonical-determinant-majorization}
\end{equation}
Equation~\eqref{eq:canonical-det-linearization} now follows from
\cite[Lemma~4]{GuoPhongTong2023}.
\end{proof}

In conclusion, define
\begin{equation}\label{eq:canonical-operator}
 f_\Gamma(\lambda)=
 \begin{cases}
 \displaystyle
 \left(\frac{\Phi_{\overline{\Gamma}}(e)}{\Phi_{\overline{\Gamma}}(\lambda)}\right)^{1/n},
 & \Gamma\neq\Gamma_1,\\[1.4ex]
 \displaystyle \frac1n\sum_i\lambda_i,
 & \Gamma=\Gamma_1,
 \end{cases}
\end{equation}

\begin{proof}[Proof of Theorem~\ref{thm:canonical-operator}]
Let $f_\Gamma$ be defined by \eqref{eq:canonical-operator}. In the
$\Gamma_1$ case, the theorem follows from Lemma~\ref{lem:trace-cone}. In the
proper-cone case,
Lemmas~\ref{lem:canonical-concavity}, \ref{lem:canonical-ellipticity}, and
\ref{lem:canonical-structure} verify Assumptions~\ref{ass1}, \ref{ass2}, and
\ref{ass3}, respectively.
\end{proof}

\section{Proof of Theorem~\ref{thm:regularization}}

\begin{proof}[Proof of Theorem~\ref{thm:regularization}]
By the definition of $u_t$, we have that $u_t\in C^{\infty}(M)\cap SH_{\chi,\Gamma}(M,\omega)$.

Proposition~\ref{prop:heat-envelope} gives
\begin{equation}\label{eq:heat-upper}
 u \leq h_t.
\end{equation}
Since $u\in USC(X)$, $u\leq h_t$, and
$\lambda[\chi+dd^c u]\in\overline{\Gamma}$ in the viscosity sense, the
definition of the envelope gives
\begin{equation}\label{eq:sandwich1}
 u\leq P_{\Gamma,\chi}(h_t)\leq h_t.
\end{equation}
Combining \eqref{eq:sandwich1} and \eqref{eq:L1-rate}, we obtain
\begin{equation}\label{eq:L1-rate1}
 \|P_{\Gamma,\chi}(h_t)-u\|_{L^1(dV_\omega)}\leq\|h_t-u\|_{L^1(dV_\omega)}
 \leq C_{\omega,\chi}t.
\end{equation}

Combining this with
$\|u_t-P_{\Gamma,\chi}(h_t)\|_{\infty}\leq t$, we obtain
\begin{equation}\label{eq:main-L1-comparator}
 u_t-u\geq-t,
 \qquad
 \|(u_t-u)^+\|_{L^1(dV_\omega)}\leq Ct.
\end{equation}
Proposition~\ref{prop:heat-envelope} gives
\[
 \|\nabla h_t\|_\infty^2+\|\nabla^2h_t\|_\infty
 \leq Ct^{-1}.
\]
Moreover $\osc_M h_t\leq\osc_M u+Ct$ is uniformly bounded. Applying
Theorem~\ref{thm:penalized-c2} to $u_t$ therefore gives
\begin{equation}\label{eq:regularization-gradient}
 \|\nabla u_t\|_\infty\leq Ct^{-1/2}.
\end{equation}

Equation~\eqref{eq:sandwich} also implies
\[
 \|u_t\|_{\infty}\leq C,
\]
where $C$ depends on $\|u\|_{\infty}$.
\end{proof}

\section{Stability}
Set
\[
 dV:=\frac{\omega^n}{\int_M\omega^n},
 \qquad
 \Vol(E):=\int_E dV
\]
for every Borel set $E\subset M$. Thus $dV(M)=1$.
Since \(M\) is compact and \(\Gamma\) is open, there is a fixed number
\(0<\vartheta_\chi\leq1\) such that
\begin{equation} \label{eq:detailed-chi-margin}
 \lambda [\chi-\tau \omega](x)\in\Gamma
 \quad\text{for every }x\in M\text{ and }0\leq\tau\leq\vartheta_\chi.
\end{equation}
For \(0<\delta<1\), \(s\geq0\), and an exactly
\(\Gamma\)-admissible \(v\in C^2(M)\), meaning precisely
\begin{equation}
 \lambda(A_v)
 =\lambda[\chi+\ddc v]\in\Gamma
 \quad\text{on }M,
 \label{eq:detailed-v-exact-admissibility}
\end{equation}
put
\[
 q_\delta=(1-\delta)v-u,
 \qquad
 \Omega_{\delta,s}=\{q_\delta>s\},
 \qquad
 w_{\delta,s}=(q_\delta-s)^+,
\]
and
\begin{equation}
 \mathcal A_{\delta,s,\kappa}
 =\left(\int_M
       \bigl[w_{\delta,s}e^{nG}\bigr]^\kappa\,dV
   \right)^{1/\kappa}
 =\|w_{\delta,s}e^{nG}\|_{L^\kappa(dV)},
 \label{eq:detailed-stability-A}
\end{equation}
where \(1<\kappa<p_0\) will be chosen sufficiently close to one. Here
\(e^{nG}=(e^{G(x)})^n\) is an ordinary scalar power. We prove the following
estimate of Guo--Phong--Tong type \cite{GuoPhongTong2023}.

\begin{proposition}
\label{prop:detailed-viscosity-trudinger}
There are constants \(\beta_0,C>0\), depending only on the fixed
Hermitian metric, the fixed form \(\chi\) (in particular its uniform
margin \(\vartheta_\chi\)), \(n\), \(\kappa\), and the structural
constants, such that, whenever
\(\mathcal A_{\delta,s,\kappa}>0\),
\begin{equation}
 \int_M
 \exp\!\left(
 \beta_0
 \frac{w_{\delta,s}^{(n+1)/n}}
      {\mathcal A_{\delta,s,\kappa}^{1/n}}
 \right)dV
 \leq
 C\exp\!\left(
 C\delta^{-(n+1)}\mathcal A_{\delta,s,\kappa}
 \right).
 \label{eq:detailed-trudinger}
\end{equation}
If \(\mathcal A_{\delta,s,\kappa}=0\), then
\(w_{\delta,s}\equiv0\).
\end{proposition}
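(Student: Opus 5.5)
The plan is to follow the auxiliary Monge--Amp\`ere method of Guo--Phong--Tong \cite{GuoPhongTong2023}, as adapted to Hermitian manifolds in \cite{GuoPhong2024}, with $q_\delta$ in the role of the solution. First, since $u$ is merely a viscosity solution, I would reduce to a setting where the comparison can be done pointwise. I would smooth $w_{\delta,s}$ and replace it by $\eta_j(q_\delta-s)$, where $\eta_j$ is a smooth approximation of $x\mapsto x^+$. Then, for each $j$, I solve the auxiliary complex Monge--Amp\`ere equation on $(M,\omega)$ of Tosatti--Weinkove type,
\[
 (\omega+\ddc\psi_j)^n=c_j\,\frac{\eta_j(q_\delta-s)^\kappa e^{n\kappa G}}{A_j^\kappa}\,\omega^n,\qquad \sup_M\psi_j=0,
\]
with $A_j\to\mathcal A_{\delta,s,\kappa}$. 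On a Hermitian manifold the constant $c_j$ is not identically one, but it is bounded above and below by constants depending only on $\omega$. The right-hand side has $L^{p_0/\kappa}$ norm controlled, so the Hermitian uniform estimate together with $\alpha$-invariance gives $\int_M e^{-\alpha_0\psi_j}\,dV\le C$ for a fixed $\alpha_0$.

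The core step is a viscosity comparison. Set $\Psi=-\varepsilon(-\psi_j+\Lambda)^{n/(n+1)}$ with $\varepsilon=c\,\delta^{-n/(n+1)}A_j^{1/(n+1)}$, and show that $q_\delta-s-\Psi\le0$ on $\Omega_{\delta,s}$ by examining an interior maximum point $x_0$. At $x_0$ the function $(1-\delta)v-\Psi-\text{const}$ touches $u$ from below, so the supersolution half of the viscosity definition applies. Either $\lambda[\chi+\ddc((1-\delta)v-\Psi)](x_0)\notin\Gamma$, or $f(\cdot)\le e^{G(x_0)}$.

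To get a contradiction I decompose
\[
 \chi+\ddc((1-\delta)v-\Psi)=(1-\delta)(\chi+\ddc v)+\delta\chi-\ddc\Psi .
\]
Here $-\ddc\Psi\ge \varepsilon\frac{n}{n+1}(-\psi+\Lambda)^{-1/(n+1)}(\omega+\ddc\psi)-\varepsilon\frac{n}{n+1}(-\psi+\Lambda)^{-1/(n+1)}\omega$. The negative $\omega$ term is absorbed by $\delta\chi$ using the margin \eqref{eq:detailed-chi-margin}, provided $\varepsilon\Lambda^{-1/(n+1)}\lesssim\delta\vartheta_\chi$; this is where $\delta$ enters. What remains lies in $\Gamma$ by \eqref{eq:spectral-cone-monotonicity} and the convexity of the cone. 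Concavity then gives $f$ evaluated on the sum at least $f$ evaluated on the positive part, and Lemma~\ref{lem:determinant-domination} (from Assumption~\ref{ass3}) bounds this below by $n\gamma_0^{1/n}$ times the $n$-th root of the determinant of $\omega+\ddc\psi$. That root equals a multiple of $(q_\delta-s)^{\kappa/n}e^{\kappa G}/A_j^{\kappa/n}$. With the chosen exponents this exceeds $e^{G(x_0)}$ at a point where $q_\delta-s>\Psi$, a contradiction. The powers $\kappa$ versus $1$ on $e^G$ need care: I would instead use $e^{nG}$ to the first power inside the auxiliary density and put $w^\kappa$ via Hölder, choosing the normalization so that the determinant root dominates $e^G$ exactly when $w\gtrsim\varepsilon(-\psi+\Lambda)^{n/(n+1)}$.

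From the comparison, $w^{(n+1)/n}\le C\delta^{-1}A^{1/n}(-\psi+\Lambda)$ on $\Omega_{\delta,s}$. Exponentiating and using the $\alpha$-invariance bound gives \eqref{eq:detailed-trudinger} after letting $j\to\infty$. The factor $\exp(C\delta^{-(n+1)}\mathcal A)$ comes from the choice of $\Lambda$ forced by the absorption step. If $\mathcal A=0$, then $w=0$ wherever $e^{nG}>0$, which is everywhere, so $w\equiv0$.

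I expect the main obstacle to be the viscosity comparison step. It requires checking admissibility of the test function in the exact cone, handling the non-Kähler normalizing constant $c_j$, and tracking the exponents so that the power of $\delta$ comes out exactly as $\delta^{-(n+1)}$.
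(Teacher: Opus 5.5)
There is a genuine gap, and it sits where you suspected: the normalizing constant of the auxiliary Monge--Amp\`ere equation. You assert that $c_j$ is bounded above and below by constants depending only on $\omega$. Nothing of that kind is available on an arbitrary compact Hermitian manifold. Your density has unit $L^1$ mass, so $c_j$ is essentially $\int_M(\omega+\ddc\psi_j)^n$, and a lower bound for such Monge--Amp\`ere volumes depending only on $\omega$ is not known in general.

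The lower bound that is actually available, \cite[Lemma~5.9]{KolodziejNguyen2015}, depends on an $L^\kappa$ bound ($\kappa>1$) for the normalized density. Your density $w^\kappa e^{n\kappa G}/A^\kappa$ has no such uniform bound. Its $L^{p_0/\kappa}$ norm equals $\bigl(\|we^{nG}\|_{L^{p_0}}/\|we^{nG}\|_{L^\kappa}\bigr)^\kappa$, which blows up when $w$ concentrates on a small set, e.g.\ for $s$ close to $\sup_M q_\delta$. By contrast, the bound $\int_M e^{-a_\omega\psi}\,dV\le C_\omega$ needs no control of the density at all: it holds for every $\omega$-psh $\psi$ with $\sup_M\psi=0$. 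Your fallback (first power of $e^{nG}$, but $w^\kappa$) has the same defect and also spoils the exponent $(n+1)/n$.

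The missing idea is to use the density $we^{nG}$ with \emph{both} factors to the first power, and to normalize it by its $L^\kappa$ norm rather than its $L^1$ norm.
\begin{itemize}
\item First powers make $e^{G(x_0)}$ cancel exactly against $F(A_P)\le e^{G(x_0)}$.
\item Take $\widehat\rho_j\ge we^{nG}$ smooth and positive, and set $\rho_j=\widehat\rho_j/\|\widehat\rho_j\|_{L^\kappa}$. Then $\|\rho_j\|_{L^\kappa}=1$, so $b_j\ge -C(\omega,\kappa)$ uniformly.
\item This lower bound is the only information on $b_j$ ever used, through $\det T_j=e^{b_j}\rho_j$ and $\rho_j^{1/n}\ge w^{1/n}e^{G}\mathcal A_j^{-1/n}$.
\end{itemize}
This is exactly why $\mathcal A_{\delta,s,\kappa}$ is an $L^\kappa$ norm in the statement.

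Second, your parameters do not give a $\delta$-independent $\beta_0$. With $\varepsilon=c\,\delta^{-n/(n+1)}A^{1/(n+1)}$, your own conclusion is $w^{(n+1)/n}\le C\delta^{-1}A^{1/n}(-\psi+\Lambda)$. Exponentiating against $\int_M e^{-a_\omega\psi}\,dV\le C_\omega$ then forces $\beta_0\lesssim\delta$. The absorption condition $\varepsilon\Lambda^{-1/(n+1)}\lesssim\delta\vartheta_\chi$ in turn forces $\Lambda\gtrsim\delta^{-(2n+1)}A$, which yields $\exp(C\delta^{-(2n+1)}\mathcal A)$ rather than $\exp(C\delta^{-(n+1)}\mathcal A)$. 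Uniformity in $\delta$ is essential downstream: it is what makes the moment bounds and $B_\eta$ in Lemma~\ref{l3.3} independent of $\delta$. The contradiction at the maximum point only needs $\varepsilon\gtrsim\mathcal A^{1/(n+1)}$. So take $\varepsilon\simeq\mathcal A^{1/(n+1)}$ with no $\delta$, and put all the $\delta$-dependence into $\Lambda\simeq(\varepsilon/(\vartheta_\chi\delta))^{n+1}\simeq\delta^{-(n+1)}\mathcal A$.

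Two smaller points. First, $\Psi$ must be $+\varepsilon(-\psi+\Lambda)^{n/(n+1)}$. With your sign, $q_\delta-s\le\Psi<0$ is impossible on $\Omega_{\delta,s}$, although your formula for $-\ddc\Psi$ already uses the correct sign. Second, $\eta_j(q_\delta-s)$ is only continuous, so you need a smooth positive majorant of $we^{nG}$ to invoke Tosatti--Weinkove.

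With these fixes the rest of your argument works. Admissibility of the test function follows from \eqref{eq:detailed-chi-margin} and cone monotonicity. The lower bound $F(A_P)\ge F(c'T)\ge n\gamma_0^{1/n}c'(\det T)^{1/n}$ follows from concavity along rays together with Lemma~\ref{lem:determinant-domination}. This is equivalent to the paper's trace pairing against $DF(A_{P_j})$.
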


\begin{proof}
Write \(w=w_{\delta,s}\) and
\(\mathcal A=\mathcal A_{\delta,s,\kappa}>0\).
Since \(we^{nG}\) is continuous and nonnegative, choose smooth strictly
positive functions \(\widehat\rho_j\) such that
\[
 \widehat\rho_j\geq we^{nG},
 \qquad
 \widehat\rho_j\longrightarrow we^{nG}
 \quad\hbox{uniformly on }M.
\]
Set
\begin{equation}
 \mathcal A_j=\|\widehat\rho_j\|_{L^\kappa(dV)},
 \qquad
 \rho_j=\frac{\widehat\rho_j}{\mathcal A_j}.
 \label{eq:detailed-density}
\end{equation}
Then \(\mathcal A_j\to\mathcal A\) and
\(\|\rho_j\|_{L^\kappa}=1\).  The Hermitian Monge--Amp\`ere theorem
\cite{TW} gives a smooth
function \(\psi_j\) and a constant \(b_j\in\mathbb R\) satisfying
\begin{equation}
 (\omega+\ddc\psi_j)^n=e^{b_j}\rho_j\omega^n,
 \qquad
 \sup_M\psi_j=0.
 \label{eq:detailed-MA}
\end{equation}
The scalar \(b_j\) cannot in general be set equal to zero: on a Hermitian
manifold the integral of \((\omega+\ddc\psi)^n\) is not fixed by the
cohomology class.  This is the first point at which the K\"ahler proof must
be changed.
The lower-bound part of \cite[Lemma~5.9]{KolodziejNguyen2015} gives
\begin{equation}
 b_j\geq-C_{\mathrm{MA}}(M,\omega,\kappa).
 \label{eq:detailed-b-lower}
\end{equation}
This is exactly why the density was normalized in \(L^\kappa\), rather
than in \(L^1\). This is a trick used in \cite{ChengXu2025}. No upper bound for \(b_j\) will be used.

Define the auxiliary positive-definite endomorphism
\begin{equation}
 T_j:=g^{-1}(\omega+\ddc\psi_j)
     =I+g^{-1}\ddc\psi_j.
 \label{eq:detailed-Tj}
\end{equation}
Then $\det T_j=e^{b_j}\rho_j$.  Notice that $T_j$ is built from
$\omega$; it is not the equation endomorphism
$g^{-1}(\chi+\ddc\psi_j)$.  Let
\[
 \alpha=\frac{n}{n+1},
 \qquad
 K_\kappa=
 \left(\alpha n \gamma_0^{1/n}
 e^{-C_{\mathrm{MA}}/n}\right)^{-1},
\]
\begin{equation}
 \varepsilon_j=(2K_\kappa^n\mathcal A_j)^{-1/(n+1)},
 \qquad
 \Lambda_j=
 \left(
 \frac{10\alpha}
      {\vartheta_\chi\delta\varepsilon_j}
 \right)^{n+1},
 \qquad
 R_j=-\psi_j+\Lambda_j.
 \label{eq:detailed-parameters}
\end{equation}
Because \(\psi_j\leq0\), one has \(R_j\geq\Lambda_j>0\).

We claim that
\begin{equation}
 \Theta_j:=\varepsilon_jw-R_j^\alpha\leq0
 \qquad\hbox{on }M.
 \label{eq:detailed-theta}
\end{equation}
Assume to the contrary that \(\Theta_j\) has a positive maximum at
\(x_j\).  Positivity implies \(w(x_j)>0\).  Since
\(q_\delta-s\) is continuous, it is positive in a neighborhood of
\(x_j\), and there
\[
 w=(1-\delta)v-u-s.
\]
Define the smooth function
\begin{equation}
 P_j=(1-\delta)v-s-\varepsilon_j^{-1}R_j^\alpha.
 \label{eq:detailed-lower-test}
\end{equation}
Near \(x_j\),
\[
 \Theta_j=\varepsilon_j(P_j-u).
\]
If \(m_j=(P_j-u)(x_j)\), maximality gives
\(P_j-m_j\leq u\) near \(x_j\), with equality at \(x_j\).
Thus \(P_j-m_j\) is a genuine smooth lower test for \(u\).  Since a
constant does not change the complex Hessian, we continue to denote the
test by \(P_j\).

Put
\begin{equation}
 c_j=\alpha\varepsilon_j^{-1}R_j^{-1/(n+1)}.
 \label{eq:detailed-cj}
\end{equation}
The definition of \(\Lambda_j\) gives
\begin{equation}
 c_j
 \leq\alpha\varepsilon_j^{-1}
       \Lambda_j^{-1/(n+1)}
 =\frac{\vartheta_\chi\delta}{10}.
 \label{eq:detailed-cj-bound}
\end{equation}
Since \(R_j=-\psi_j+\Lambda_j\), direct differentiation gives
\begin{equation}
 \ddc\!\left(-\varepsilon_j^{-1}R_j^\alpha\right)
 =c_j\ddc\psi_j
 +\frac{\alpha(1-\alpha)}{\varepsilon_j}
  R_j^{\alpha-2}
  \sqrt{-1}\,\partial\psi_j\wedge\bar\partial\psi_j.
 \label{eq:detailed-chain-rule}
\end{equation}
Consequently
\begin{equation}
 A_{P_j}
 =(1-\delta)A_v
  +\delta\left(B-\frac{c_j}{\delta}I\right)
  +c_j T_j+R_j^+,
 \label{eq:detailed-decomposition}
\end{equation}
where
\begin{equation}
 R_j^+
 =\frac{\alpha(1-\alpha)}{\varepsilon_j}
 R_j^{\alpha-2}
 g^{-1}\!\left(
 \sqrt{-1}\,\partial\psi_j\wedge\bar\partial\psi_j
 \right)\geq0.
 \label{eq:detailed-positive-rank-one}
\end{equation}
By \eqref{eq:detailed-cj-bound},
\(0\leq c_j/\delta\leq\vartheta_\chi\), so
\eqref{eq:detailed-chi-margin} gives
\[
 \lambda\!\left(B-\frac{c_j}{\delta}I\right)\in\Gamma.
\]
Moreover, \(A_v\) has spectrum in \(\Gamma\) by
\eqref{eq:detailed-v-exact-admissibility}, while $T_j>0$ has spectrum
in $\Gamma_n\subset\Gamma$.  Since the spectral matrix cone
$\mathcal C_\Gamma$ is a convex cone, the first three terms in
\eqref{eq:detailed-decomposition} have a sum in $\mathcal C_\Gamma$.
Its monotonicity under positive-semidefinite addition, recorded in
\eqref{eq:spectral-cone-monotonicity}, therefore gives
\begin{equation}
 \lambda(A_{P_j})\in\Gamma.
 \label{eq:detailed-test-admissible}
\end{equation}
This verifies the admissibility hypothesis needed in the constrained
viscosity supersolution condition.

Let
\[
 G_j=D F(A_{P_j}).
\]
Strict ellipticity makes \(G_j\) positive definite.  Lemma~\ref{lem:chengxu-pairing} gives
\[
 \operatorname{tr}(G_jA_v)\geq0,
 \qquad
 \operatorname{tr}\!\left[
 G_j\left(B-\frac{c_j}{\delta}I\right)
 \right]\geq0,
 \qquad
 \operatorname{tr}(G_jR_j^+)\geq0.
\]
  Taking the trace of
\eqref{eq:detailed-decomposition} against \(G_j\) therefore yields
\begin{equation}
 c_j\operatorname{tr}(G_j T_j)
 \leq\operatorname{tr}(G_jA_{P_j}).
 \label{eq:detailed-trace-first}
\end{equation}
Lemma~\ref{lem:concavity-pairing} and the
lower-test viscosity inequality give
\begin{equation}
 \operatorname{tr}(G_jA_{P_j})
 \leq F(A_{P_j})\leq e^{G(x_j)}.
 \label{eq:detailed-viscosity-upper}
\end{equation}

Apply the arithmetic--geometric mean inequality to the positive matrix
\(G_j^{1/2}T_jG_j^{1/2}\).  Since
\(\lambda(A_{P_j})\in\Gamma\) by
\eqref{eq:detailed-test-admissible}, Assumption~\ref{ass3} applies
\emph{directly} at \(A_{P_j}\) and gives
\(\det G_j\geq\gamma_0\).  Thus, using
\eqref{eq:detailed-MA}--\eqref{eq:detailed-b-lower}, we obtain
\begin{equation}
\begin{aligned}
 \operatorname{tr}(G_jT_j)
 &\geq n\{\det G_j\det T_j\}^{1/n}\\
 &\geq n \gamma_0^{1/n}e^{-C_{\mathrm{MA}}/n}
        \rho_j(x_j)^{1/n}.
 \label{eq:detailed-AMGM}
\end{aligned}
\end{equation}
Because \(\widehat\rho_j\geq we^{nG}\),
\begin{equation}
 \rho_j(x_j)^{1/n}
 \geq
 \frac{w(x_j)^{1/n}e^{G(x_j)}}{\mathcal A_j^{1/n}}.
 \label{eq:detailed-density-lower}
\end{equation}
Combining \eqref{eq:detailed-trace-first}--
\eqref{eq:detailed-density-lower}, and cancelling the strictly positive
factor \(e^{G(x_j)}\), gives
\begin{equation}
 c_j n \gamma_0^{1/n}
 e^{-C_{\mathrm{MA}}/n}
 \frac{w(x_j)^{1/n}}{\mathcal A_j^{1/n}}
 \leq1.
 \label{eq:detailed-cancel-expG}
\end{equation}

On the other hand, positivity of the maximum gives
\begin{equation}
 \varepsilon_jw(x_j)>R_j(x_j)^\alpha.
 \label{eq:detailed-positive-max}
\end{equation}
Since \(\alpha/n=1/(n+1)\), substituting
\eqref{eq:detailed-cj} and \eqref{eq:detailed-positive-max} into the
left-hand side of \eqref{eq:detailed-cancel-expG} gives the strict lower
bound
\begin{align*}
 c_j n \gamma_0^{1/n}
 e^{-C_{\mathrm{MA}}/n}
 \frac{w(x_j)^{1/n}}{\mathcal A_j^{1/n}}
 &>
 \alpha n \gamma_0^{1/n}e^{-C_{\mathrm{MA}}/n}
 \varepsilon_j^{-(n+1)/n}\mathcal A_j^{-1/n}\\
 &=K_\kappa^{-1}
   (\varepsilon_j^{n+1}\mathcal A_j)^{-1/n}
 =2^{1/n}>1,
\end{align*}
contradicting \eqref{eq:detailed-cancel-expG}.  This proves
\eqref{eq:detailed-theta}.

It follows that
\[
 \varepsilon_jw\leq R_j^\alpha.
\]
Raising this inequality to the power \(1/\alpha=(n+1)/n\), and using
the definition of \(\varepsilon_j\), yields
\begin{equation}
 2^{-1/n}K_\kappa^{-1}
 \frac{w^{(n+1)/n}}{\mathcal A_j^{1/n}}
 \leq -\psi_j+\Lambda_j.
 \label{eq:detailed-pointwise-trudinger}
\end{equation}
Moreover,
\begin{equation}
 \Lambda_j
 =(10\alpha)^{n+1}\vartheta_\chi^{-(n+1)}
   \delta^{-(n+1)}
   \varepsilon_j^{-(n+1)}
 =2(10\alpha)^{n+1}K_\kappa^n
   \vartheta_\chi^{-(n+1)}
   \delta^{-(n+1)}\mathcal A_j.
 \label{eq:detailed-Lambda-scale}
\end{equation}

The Hermitian exponential-integrability lemma, cf. Lemma 4.10 in \cite{ChengXu2025},
provides \(a_\omega>0\) and \(C_\omega\) such that
\begin{equation}
 \int_Me^{-a_\omega\psi}\,dV\leq C_\omega
 \quad\text{whenever}\quad
 \omega+\ddc\psi>0,\qquad\sup_M\psi=0.
 \label{eq:detailed-alpha-invariant}
\end{equation}
Choose \(\beta_0>0\) so that
\(2^{1/n}K_\kappa\beta_0\leq a_\omega\).  Equations
\eqref{eq:detailed-pointwise-trudinger}--
\eqref{eq:detailed-alpha-invariant} give
\begin{equation}
 \int_M
 \exp\!\left(
 \beta_0\frac{w^{(n+1)/n}}{\mathcal A_j^{1/n}}
 \right)dV
 \leq
 C\exp\!\left(C\delta^{-(n+1)}\mathcal A_j\right).
 \label{eq:detailed-j-trudinger}
\end{equation}
Since \(\mathcal A_j\to\mathcal A>0\), Fatou's lemma proves
\eqref{eq:detailed-trudinger}.  Finally, if
\(\mathcal A_{\delta,s,\kappa}=0\), then
\(w_{\delta,s}e^{nG}=0\) almost
everywhere.  Since \(e^G>0\) and \(w_{\delta,s}\) is continuous, one has
\(w_{\delta,s}\equiv0\).
\end{proof}

We next estimate the upper level set
$\{(1-\delta)v-u-s>0\}$ in order to apply De Giorgi's lemma.
\begin{lemma}\label{l3.3}
Let $u$ and $v$ be as in
Proposition~\ref{prop:detailed-viscosity-trudinger}, and assume
$e^{nG}\in L^{p_0}(dV)$ for some $p_0>1$.  Fix
$0<\delta<1$, $s_0\geq0$, and $0<\eta<1/n$.  One can choose
$1<\kappa<p_0$, sufficiently close to $1$, so that the following
holds.  If
\[
 \mathcal A_{\delta,s_0,\kappa}\leq\delta^{n+1},
\]
and
\[
 \Omega_{\delta,s}=\{(1-\delta)v-u>s\},
 \qquad
 U_\delta(s)=\int_{\Omega_{\delta,s}}e^{nG}\,dV,
\]
then there is a constant $B_\eta>0$ such that, for every
$s\geq s_0$ and $r>0$,
\begin{equation}\label{eq:weighted-level-recurrence}
 rU_\delta(s+r)\leq B_\eta U_\delta(s)^{1+\eta}.
\end{equation}
The constant is independent of $\delta,s_0,s,r$; it depends only on
the fixed data, $p_0,\eta,\kappa$, and
$\|e^{nG}\|_{L^{p_0}(dV)}$.
\end{lemma}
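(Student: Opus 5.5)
We use Proposition~\ref{prop:detailed-viscosity-trudinger} at level $s$ and read it as an $L^1$ bound on the weighted measure of the higher level set $\Omega_{\delta,s+r}$, in the spirit of Guo--Phong--Tong. Fix $s\ge s_0$ and $r>0$. Since $w_{\delta,s}\le w_{\delta,s_0}$ pointwise, we have $\mathcal A_{\delta,s,\kappa}\le\mathcal A_{\delta,s_0,\kappa}\le\delta^{n+1}$. The right-hand side of \eqref{eq:detailed-trudinger} is therefore bounded by a fixed constant $C$, independent of $\delta,s$. If $\mathcal A_{\delta,s,\kappa}=0$, then $w_{\delta,s}\equiv0$ and $U_\delta(s+r)=0$, so there is nothing to prove. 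Otherwise, write $\mathcal A=\mathcal A_{\delta,s,\kappa}$ and $w=w_{\delta,s}$.

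\textbf{Step 1: bound $\mathcal A$ by $U_\delta(s)$.} The function $w$ is supported in $\Omega_{\delta,s}$. By H\"older's inequality with exponents $p_0/\kappa$ and its conjugate,
\[
\mathcal A^\kappa=\int_{\Omega_{\delta,s}}w^\kappa e^{n\kappa G}\,dV .
\]
We split $e^{n\kappa G}=e^{nG\cdot\theta}\,e^{nG(\kappa-\theta)}$ so that some power of $U_\delta(s)=\int_{\Omega_{\delta,s}}e^{nG}\,dV$ appears, and we control $w$ in $L^{m}(dV)$ for every finite $m$ using the exponential integrability just obtained. Since $\int\exp(\beta_0 w^{(n+1)/n}\mathcal A^{-1/n})\,dV\le C$, we get
\[
\|w\|_{L^m(dV)}^{(n+1)/n}\le C_m\,\mathcal A^{1/n}.
\]
Concretely, apply H\"older on $\Omega_{\delta,s}$ with three factors: $w^\kappa$ in $L^{m}$, $e^{nG(\kappa-\theta)}$ in $L^{p_0/(\kappa-\theta)}$, and $e^{nG\theta}$ in $L^{1/\theta}$. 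The exponents satisfy $\kappa/m+(\kappa-\theta)/p_0+\theta=1$. This yields
\[
\mathcal A^{\kappa}\le C\,\mathcal A^{\kappa/(n+1)}\,U_\delta(s)^{\theta}.
\]
Since $\kappa>\kappa/(n+1)$, we can absorb and obtain
\[
\mathcal A\le C\,U_\delta(s)^{\theta(n+1)/(n\kappa)}.
\]
Choosing $\kappa$ close to $1$, $m$ large, and $\theta$ close to $1-(\kappa-1)/(p_0-1)$ makes the exponent $\theta(n+1)/(n\kappa)$ as close as we like to $(n+1)/n$.

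\textbf{Step 2: bound $U_\delta(s+r)$.} On $\Omega_{\delta,s+r}$ we have $w\ge r$. By H\"older and Step~1,
\[
rU_\delta(s+r)\le\int_{\Omega_{\delta,s}}w\,e^{nG}\,dV\le \|w\|_{L^{m'}}\,\|e^{nG}\|_{L^{p_0}}\,U_\delta(s)^{1-1/p_0-1/m'}.
\]
The middle factor $\|e^{nG}\|_{L^{p_0}}$ is a fixed constant. For the first factor, $\|w\|_{L^{m'}}\le C\mathcal A^{1/(n+1)}$, and by Step~1 this is at most $C\,U_\delta(s)^{\approx 1/n}$.

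\textbf{Step 3: collect exponents.} The total exponent of $U_\delta(s)$ is approximately $1-1/p_0+1/n$. This exceeds $1+\eta$ exactly when $\eta<1/n-1/p_0$, which is not guaranteed by $\eta<1/n$ alone. This is the main obstacle: the bookkeeping must avoid wasting the $1/p_0$. The fix is to use the weight more efficiently in Step~2: bound $\int w\,e^{nG}$ directly by $\mathcal A\cdot U_\delta(s)^{1-1/\kappa}$, using H\"older with respect to the measure $e^{nG}dV$ (note $\mathcal A$ already contains the weight). The remaining requirement is to estimate $\mathcal A$ itself by $U_\delta(s)^{1/\kappa}$ times $\|w\|_{L^\infty\text{-like}}$, again through the exponential bound with respect to the measure $e^{nG}dV$. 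For that measure, exponential integrability follows from the $dV$ bound via H\"older with $e^{nG}\in L^{p_0}$.

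With $\kappa\to1$ this gives
\[
rU_\delta(s+r)\le C\,U_\delta(s)^{1+1/n-o(1)}.
\]
Since $U_\delta(s)\le\|e^{nG}\|_{L^1}$ is bounded, any $\eta<1/n$ is attained by fixing $\kappa$ (depending on $\eta,p_0$) close enough to $1$. All constants depend only on the fixed data, $p_0,\eta,\kappa$ and $\|e^{nG}\|_{L^{p_0}}$, never on $\delta,s_0,s,r$, because the hypothesis $\mathcal A\le\delta^{n+1}$ neutralized the only $\delta$-dependence in \eqref{eq:detailed-trudinger}.
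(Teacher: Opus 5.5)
Your Step~1 is correct and matches the paper's key estimate. The paper also uses the moment bound $\|w\|_{L^R}\le C_R\mathcal A^{1/(n+1)}$, which is available because $\mathcal A_{\delta,s,\kappa}\le\mathcal A_{\delta,s_0,\kappa}\le\delta^{n+1}$. It then splits $e^{nG}$ by H\"older into an $L^1(\Omega_{\delta,s})$ part and an $L^{p_0}$ part, with the parametrization $\theta=n(1+\eta)/(n+1)$, $1/Q=\theta+(1-\theta)/p_0$, $1/\kappa=1/R+1/Q$, and concludes $\mathcal A\le C\,U_\delta(s)^{1+\eta}$.

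The gap is in how you connect $\mathcal A$ to $U_\delta(s+r)$.
\begin{itemize}
\item In Step~2, the third H\"older factor is $\mathbf 1_{\Omega_{\delta,s}}$ measured in $dV$. It therefore produces $\Vol(\Omega_{\delta,s})^{1-1/p_0-1/m'}$, not a power of $U_\delta(s)$. Since $e^{G}$ has no lower bound, volume cannot be converted into $U_\delta(s)$, so your exponent count there is not even valid.
\item The fix in Step~3 is worse: the inequality $\int w\,e^{nG}\,dV\le\mathcal A\,U_\delta(s)^{1-1/\kappa}$ is false. The reason is that $\mathcal A=\|we^{nG}\|_{L^\kappa(dV)}$, which is not $\|w\|_{L^\kappa(e^{nG}dV)}$. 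If the weight equals a small constant $\epsilon$ on $\Omega_{\delta,s}$, the left side is of order $\epsilon$ while the right side is of order $\epsilon^{2-1/\kappa}$.
\item The subsequent ``$L^\infty$-like'' bound on $w$ is also unavailable, since you only control moments of $w$. It would in any case only re-derive what Step~1 already gave you.
\end{itemize}
As written, the final exponent $1+1/n-o(1)$ rests on an invalid chain.

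The missing step is elementary, and it is exactly the paper's closing argument. Since $dV(M)=1$, $\kappa>1$, and $w\ge r$ on $\Omega_{\delta,s+r}$,
\[
 rU_\delta(s+r)\le\int_M w\,e^{nG}\,dV=\|we^{nG}\|_{L^1(dV)}\le\|we^{nG}\|_{L^\kappa(dV)}=\mathcal A .
\]
Combined with Step~1, this gives $rU_\delta(s+r)\le C\,U_\delta(s)^{1+\eta}$. To lower the exponent to exactly $1+\eta$, use that $U_\delta(s)\le\|e^{nG}\|_{L^{p_0}}$. If you replace Steps~2--3 with this one line, your argument is complete and coincides with the paper's.
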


\begin{proof}
Write
\[
 w_s=((1-\delta)v-u-s)^+,
 \qquad
 \mathcal A_s=\mathcal A_{\delta,s,\kappa}.
\]
Both $w_s$ and $\mathcal A_s$ decrease with $s$.  Hence, for
$s\geq s_0$,
\[
 \delta^{-(n+1)}\mathcal A_s\leq1.
\]
If $\mathcal A_s=0$, Proposition~\ref{prop:detailed-viscosity-trudinger}
gives $w_s\equiv0$, and \eqref{eq:weighted-level-recurrence} is
immediate.  Suppose $\mathcal A_s>0$.  Expanding the exponential in
\eqref{eq:detailed-trudinger} gives, for every finite $R\geq1$,
\begin{equation}\label{eq:stability-moment-bound}
 \|w_s\|_{L^R(dV)}
 \leq C_R\mathcal A_s^{1/(n+1)}.
\end{equation}
Indeed, first take moments of order $R=(n+1)m/n$, $m\in\mathbb N$,
and then use that $dV(M)=1$.

Set
\[
 \theta=\frac{n(1+\eta)}{n+1}\in(0,1)
\]
and choose $Q\in(1,p_0)$ by
\[
 \frac1Q=\theta+\frac{1-\theta}{p_0}.
\]
Choose $1<\kappa<Q$ and define $R<\infty$ by
\[
 \frac1\kappa=\frac1R+\frac1Q.
\]
H\"older interpolation, followed by
\eqref{eq:stability-moment-bound}, yields
\begin{align*}
 \mathcal A_s
 &=\|w_se^{nG}\|_{L^\kappa(dV)}\\
 &\leq\|w_s\|_{L^R(dV)}
       \|e^{nG}\|_{L^Q(\Omega_{\delta,s})}\\
 &\leq C_R\mathcal A_s^{1/(n+1)}
       U_\delta(s)^\theta
       \|e^{nG}\|_{L^{p_0}(dV)}^{1-\theta}.
\end{align*}
Consequently,
\begin{equation}\label{eq:A-by-weighted-level}
 \mathcal A_s\leq C U_\delta(s)^{1+\eta}.
\end{equation}
On $\Omega_{\delta,s+r}$ one has $w_s\geq r$.  Since
$dV(M)=1$ and $\kappa>1$,
\[
 \mathcal A_s
 \geq r\|e^{nG}\mathbf1_{\Omega_{\delta,s+r}}\|_{L^\kappa(dV)}
 \geq rU_\delta(s+r).
\]
Combining this inequality with \eqref{eq:A-by-weighted-level} proves
\eqref{eq:weighted-level-recurrence}.
\end{proof}

\begin{lemma}\label{l3.4}
Let $\phi:[s_0,\infty)\to[0,\infty)$ be nonincreasing and
right-continuous.  Suppose that, for some $\eta>0$ and $B>0$,
\[
 r\phi(s+r)\leq B\phi(s)^{1+\eta}
 \qquad(s\geq s_0,\ r>0).
\]
Then
\begin{equation}\label{eq:de-giorgi-extinction-level}
 \phi(s)=0
 \quad\text{for every}\quad
 s\geq s_0+
 \frac{2B\phi(s_0)^\eta}{1-2^{-\eta}}.
\end{equation}
\end{lemma}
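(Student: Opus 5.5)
We prove Lemma~\ref{l3.4} by the standard De Giorgi iteration with geometrically decreasing step sizes. If $\phi(s_0)=0$, monotonicity gives $\phi\equiv0$ on $[s_0,\infty)$ and there is nothing to prove, so assume $\phi(s_0)>0$. Define the increments
\[
 r_k=2B\phi(s_0)^\eta\,2^{-k\eta},\qquad k=0,1,2,\ldots,
\]
and the levels $s_{k+1}=s_k+r_k$. Then $s_k$ increases to
\[
 s_\infty=s_0+\frac{2B\phi(s_0)^\eta}{1-2^{-\eta}}.
\]

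The key step is to show by induction that $\phi(s_k)\le 2^{-k}\phi(s_0)$. The case $k=0$ is trivial. Assuming the bound for $k$, the hypothesis with $s=s_k$ and $r=r_k$ gives
\[
 \phi(s_{k+1})\le \frac{B}{r_k}\phi(s_k)^{1+\eta}
 \le \frac{B\,2^{-k(1+\eta)}\phi(s_0)^{1+\eta}}{2B\phi(s_0)^\eta 2^{-k\eta}}
 =2^{-(k+1)}\phi(s_0).
\]
Letting $k\to\infty$, and using that $\phi$ is nonincreasing and nonnegative, we get $\phi(s_\infty)\le\lim_k\phi(s_k)=0$. Hence $\phi(s)=0$ for all $s\ge s_\infty$, which is exactly \eqref{eq:de-giorgi-extinction-level}.

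An alternative closing step avoids the limit altogether: apply the hypothesis once more with $s=s_k$ and $r=s-s_k\ge s_\infty-s_k>0$ to obtain $\phi(s)\le B\phi(s_k)^{1+\eta}/(s-s_k)$, which tends to $0$ as $k\to\infty$. Right-continuity of $\phi$ is not actually needed; monotonicity suffices.

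There is no real obstacle here. The only point needing care is that $r_k>0$, which holds because $\phi(s_0)>0$ in the nontrivial case, and that the choice $r_k\propto\phi(s_0)^\eta 2^{-k\eta}$ makes the induction close exactly with the factor $2$.
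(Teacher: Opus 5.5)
Your proof is correct and is essentially the paper's De Giorgi iteration. The one difference is the step choice: the paper picks steps adaptively, $s_{j+1}=s_j+2B\phi(s_j)^\eta$, so $\phi$ halves at each step and the total length is bounded by the geometric sum, whereas your predetermined steps $r_k=2B\phi(s_0)^\eta 2^{-k\eta}$ need the inductive bound but land exactly on the stated threshold. You are also right that right-continuity is superfluous, since $s_k\le s_\infty$ already gives $\phi(s_\infty)\le\phi(s_k)\to0$ by monotonicity; the paper invokes it at $s=s_\infty$ unnecessarily.
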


\begin{proof}
If $\phi(s_0)=0$, there is nothing to prove.  Otherwise define
recursively
\[
 s_{j+1}=s_j+2B\phi(s_j)^\eta,
 \qquad s_0\ 
 \text{as in the statement}.
\]
As long as $\phi(s_j)>0$, the assumed recurrence gives
\[
 \phi(s_{j+1})\leq\frac12\phi(s_j),
 \qquad
 \phi(s_j)\leq2^{-j}\phi(s_0).
\]
Therefore $s_j\uparrow s_\infty<\infty$ and
\[
 s_\infty-s_0
 \leq2B\phi(s_0)^\eta
       \sum_{j=0}^\infty2^{-j\eta}
 =\frac{2B\phi(s_0)^\eta}{1-2^{-\eta}}.
\]
For every $s>s_\infty$, monotonicity gives
$0\leq\phi(s)\leq\phi(s_j)\to0$.  Right-continuity gives the same
conclusion at $s=s_\infty$, and
\eqref{eq:de-giorgi-extinction-level} follows.
\end{proof}

\begin{corollary}\label{c3.5}
Let $u,v,\delta,s_0$ be as in Lemma~\ref{l3.3}, and put
$q_0=p_0/(p_0-1)$.  For every
\[
 0<\nu<\frac1{nq_0},
\]
one may choose $\kappa>1$ sufficiently close to $1$ such that, if
$\mathcal A_{\delta,s_0,\kappa}\leq\delta^{n+1}$,
then
\begin{equation}\label{eq:level-to-supremum}
 \sup_M\bigl((1-\delta)v-u\bigr)
 \leq s_0+C_\nu
 \Vol(\Omega_{\delta,s_0})^\nu.
\end{equation}
Here $C_\nu$ is independent of $\delta$ and $s_0$.
\end{corollary}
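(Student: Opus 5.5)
Combine Lemma~\ref{l3.3} with the De Giorgi extinction Lemma~\ref{l3.4}, applied to $\phi(s)=U_\delta(s)$, and then pass from the weighted quantity $U_\delta(s_0)$ to the unweighted volume by H\"older's inequality.

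\emph{Choice of parameters.} Given $0<\nu<\frac1{nq_0}$, I pick $\eta\in(0,1/n)$ with $\eta/q_0=\nu$, i.e.\ $\eta=\nu q_0$. This is admissible since $\nu q_0<1/n$. Lemma~\ref{l3.3} then furnishes $\kappa$ close to $1$ and a constant $B_\eta$, independent of $\delta,s_0,s,r$, such that
\[
 rU_\delta(s+r)\le B_\eta U_\delta(s)^{1+\eta}
\]
for all $s\ge s_0$, $r>0$, whenever $\mathcal A_{\delta,s_0,\kappa}\le\delta^{n+1}$.

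\emph{Hypotheses of Lemma~\ref{l3.4}.} The function $s\mapsto U_\delta(s)=\int_{\{q_\delta>s\}}e^{nG}\,dV$ is nonincreasing because the sets $\Omega_{\delta,s}$ shrink as $s$ grows. It is right-continuous because $\Omega_{\delta,s}=\bigcup_{s'>s}\Omega_{\delta,s'}$ (strict inequality), so monotone convergence applies, using $e^{nG}\in L^1$. Lemma~\ref{l3.4} therefore gives $U_\delta(s)=0$ for
\[
 s\ge s_0+\frac{2B_\eta U_\delta(s_0)^\eta}{1-2^{-\eta}}.
\]

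\emph{From $U_\delta=0$ to a supremum bound.} If $U_\delta(s)=0$, then the open set $\Omega_{\delta,s}$ has zero $e^{nG}dV$-measure. Since $q_\delta=(1-\delta)v-u$ is continuous, $\Omega_{\delta,s}$ is open, and $e^{nG}>0$ everywhere because $G$ is continuous. Hence $\Omega_{\delta,s}$ is an open set of zero Lebesgue measure, so it is empty. This gives $\sup_M q_\delta\le s$.

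\emph{Converting to volume.} H\"older's inequality gives
\[
 U_\delta(s_0)\le\|e^{nG}\|_{L^{p_0}}\Vol(\Omega_{\delta,s_0})^{1/q_0}.
\]
Hence
\[
 U_\delta(s_0)^\eta\le \|e^{nG}\|_{L^{p_0}}^{\eta}\Vol(\Omega_{\delta,s_0})^{\eta/q_0}=C\,\Vol(\Omega_{\delta,s_0})^{\nu}.
\]
Taking $C_\nu=2B_\eta\|e^{nG}\|_{L^{p_0}}^\eta/(1-2^{-\eta})$ yields \eqref{eq:level-to-supremum}. This constant is independent of $\delta$ and $s_0$ because $B_\eta$ is.

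\emph{Main point to check.} There is no real obstacle; the only care needed is in three places. First, $\eta=\nu q_0$ must satisfy the constraint $\eta<1/n$ of Lemma~\ref{l3.3}; this is exactly where the restriction $\nu<1/(nq_0)$ enters. Second, $\kappa$ must be chosen after $\eta$, in the way Lemma~\ref{l3.3} allows. Third, the passage from vanishing weighted measure to emptiness of the level set relies on the strict positivity of $e^{G}$ and the continuity of $u$ and $v$.
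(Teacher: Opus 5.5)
Your proposal is correct and matches the paper's proof. The paper likewise takes $\eta=\nu q_0<1/n$ in Lemma~\ref{l3.3} and applies the De Giorgi Lemma~\ref{l3.4} to the nonincreasing, right-continuous function $U_\delta$. It then uses continuity of $(1-\delta)v-u$ and strict positivity of $e^{nG}$ to turn vanishing of $U_\delta$ into the supremum bound, and finishes with H\"older's inequality. Your monotone-convergence check of right-continuity spells out a step the paper only asserts.
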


\begin{proof}
Apply Lemma~\ref{l3.3} with
$\eta=\nu q_0<1/n$
to the nonincreasing, right-continuous function
\[
 U_\delta(s)=\int_{\Omega_{\delta,s}}e^{nG}\,dV.
\]
Lemma~\ref{l3.4} gives
\[
 \sup_M((1-\delta)v-u)
 \leq s_0+C U_\delta(s_0)^\eta.
\]
Here we used the strict positivity of $e^{nG}$ and the continuity of
$(1-\delta)v-u$: if a larger level set were nonempty, it would contain
an open set of positive $U_\delta$-mass.  Finally, H\"older's inequality
gives
\[
 U_\delta(s_0)
 \leq\|e^{nG}\|_{L^{p_0}(dV)}
       \Vol(\Omega_{\delta,s_0})^{1/q_0}.
\]
Since $\eta/q_0=\nu$, this proves
\eqref{eq:level-to-supremum}.
\end{proof}
\begin{lemma}\label{l3.7}
Let $u,v$ be as in Corollary~\ref{c3.5}, and assume in addition that
$v\leq0$ and $v$ is bounded.  Fix
$0<\nu<1/(nq_0)$.
Choose $\kappa>1$ as in that corollary.  If $0<\delta<1$ and
$s_0>0$ satisfy
\begin{equation}\label{eq:one-comparator-threshold-assumptions}
 s_0\geq2\delta\|v\|_{L^\infty(M)},
 \qquad
 \mathcal A_{\delta,s_0,\kappa}\leq\delta^{n+1},
\end{equation}
then
\begin{equation}\label{3.8N}
 \sup_M(v-u)
 \leq s_0+C_\nu s_0^{-\nu}
       \|(v-u)^+\|_{L^1(dV)}^\nu.
\end{equation}
The constant is independent of $\delta$ and $s_0$.
\end{lemma}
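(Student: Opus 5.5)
The plan is to reduce \eqref{3.8N} to Corollary~\ref{c3.5} by comparing $v-u$ with $(1-\delta)v-u$, and then to bound the volume of the level set by Chebyshev's inequality. Since $v\leq0$, we have
\[
 (1-\delta)v-u=v-u-\delta v\geq v-u,
\]
so $\sup_M(v-u)\leq\sup_M((1-\delta)v-u)$. The hypotheses of Corollary~\ref{c3.5} hold: $u,v,\delta,s_0$ are as in Lemma~\ref{l3.3}, $\kappa$ is chosen as in that corollary, and $\mathcal A_{\delta,s_0,\kappa}\leq\delta^{n+1}$. The corollary therefore gives
\[
 \sup_M(v-u)\leq s_0+C_\nu\Vol(\Omega_{\delta,s_0})^\nu ,
\]
with $C_\nu$ independent of $\delta$ and $s_0$.

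Next we control $\Vol(\Omega_{\delta,s_0})$ by the $L^1$ norm of $(v-u)^+$. On $\Omega_{\delta,s_0}$ we have $(1-\delta)v-u>s_0$. Since $-\delta v\leq\delta\|v\|_{L^\infty}\leq s_0/2$ by the first assumption in \eqref{eq:one-comparator-threshold-assumptions}, it follows that
\[
 v-u=(1-\delta)v-u+\delta v>s_0-\tfrac{s_0}{2}=\tfrac{s_0}{2}.
\]
Hence $\Omega_{\delta,s_0}\subset\{v-u>s_0/2\}$. Chebyshev's inequality with respect to the probability measure $dV$ then gives
\[
 \Vol(\Omega_{\delta,s_0})\leq\frac{2}{s_0}\|(v-u)^+\|_{L^1(dV)}.
\]
Substituting this bound yields \eqref{3.8N} with constant $2^\nu C_\nu$. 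This constant is still independent of $\delta$ and $s_0$.

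There is no genuine obstacle here; the argument is bookkeeping. The only points to watch are the following.
\begin{itemize}
 \item The sign condition $v\leq0$ is what gives the comparison $(1-\delta)v\geq v$.
 \item The threshold $s_0\geq2\delta\|v\|_\infty$ absorbs the $\delta v$ perturbation on the level set.
 \item The measure is normalized, so Chebyshev's inequality introduces no volume constants.
 \item $\kappa$ must be the one fixed in Corollary~\ref{c3.5}, so that the smallness hypothesis on $\mathcal A_{\delta,s_0,\kappa}$ is exactly what that corollary requires.
\end{itemize}
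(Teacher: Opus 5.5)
Your proof is correct and follows the paper's argument. You reduce to Corollary~\ref{c3.5} via $(1-\delta)v\geq v$ and then bound $\Vol(\Omega_{\delta,s_0})\leq \frac{2}{s_0}\|(v-u)^+\|_{L^1(dV)}$; the only cosmetic difference is that you get this bound from the inclusion $\Omega_{\delta,s_0}\subset\{v-u>s_0/2\}$ and Chebyshev, whereas the paper applies Chebyshev to $(1-\delta)v-u$ directly and absorbs the resulting $\delta\|v\|_{L^\infty(M)}\Vol(\Omega_{\delta,s_0})$ term using $s_0\geq 2\delta\|v\|_{L^\infty(M)}$.
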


\begin{proof}
Since $v\leq0$, we have
\begin{equation}\label{eq:comparator-negative}
 \sup_M(v-u)\leq\sup_M\bigl((1-\delta)v-u\bigr).
\end{equation}
On the other hand,
\begin{align*}
 \Vol(\Omega_{\delta,s_0})
 &\leq \frac{1}{s_0}
   \int_{\Omega_{\delta,s_0}}\bigl((1-\delta)v-u\bigr)^+\,dV \\
 &\leq \frac{1}{s_0}\left(
   \|(v-u)^+\|_{L^1(dV)}
   +\delta\|v\|_{L^\infty(M)}\Vol(\Omega_{\delta,s_0})
   \right).
\end{align*}
Therefore, if $2\delta\|v\|_{\infty}\leq s_0$, then
\begin{equation*}
 \Vol(\Omega_{\delta,s_0})
 \leq \frac{2}{s_0}\|(v-u)^+\|_{L^1(dV)}.
\end{equation*}
Combining this estimate with \eqref{eq:level-to-supremum} and
\eqref{eq:comparator-negative}, we obtain
\begin{equation*}
 \sup_M(v-u)
 \leq s_0+C_\nu s_0^{-\nu}
       \|(v-u)^+\|_{L^1(dV)}^\nu.
\end{equation*}

\end{proof}

\begin{lemma}\label{l3.9}
Under the assumptions of Lemma~\ref{l3.7}, for $0<\delta<1$, define
\begin{equation}\label{eq:threshold-definition}
 s_*(\delta)=
 \inf\left\{
 s\geq2\delta\|v\|_{L^\infty(M)}:
 \mathcal A_{\delta,s,\kappa}\leq\delta^{n+1}
 \right\}.
\end{equation}
For every
\[
 \mu>nq_0
\]
one can choose $\kappa>1$ sufficiently close to $1$ so that
\begin{equation}\label{eq:threshold-bound}
 s_*(\delta)
 \leq
 \max\left\{
 2\delta\|v\|_{L^\infty(M)},
 C_\mu\delta^{-\mu} \|(v-u)^+\|_{L^1(dV)}
 \right\}.
\end{equation}
The constant $C_\mu$ is independent of $\delta$.
\end{lemma}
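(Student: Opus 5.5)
It suffices to show that every $s\geq 2\delta\|v\|_{L^\infty}$ with $s\geq C_\mu\delta^{-\mu}\|(v-u)^+\|_{L^1(dV)}$ satisfies $\mathcal A_{\delta,s,\kappa}\leq\delta^{n+1}$. Then the infimum in \eqref{eq:threshold-definition} is at most the maximum on the right of \eqref{eq:threshold-bound}. So fix such an $s$ and estimate $\mathcal A_{\delta,s,\kappa}$ directly by H\"older's inequality, without the Trudinger bound.

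Write $w=w_{\delta,s}$ and $\Omega=\Omega_{\delta,s}$. On $\Omega$ we have $0<w\leq (v-u)^+ +\delta\|v\|_\infty$. Since $q_\delta-s>0$ there, the bound $\delta\|v\|_\infty\leq s/2$ gives
\[
w \leq (v-u)^+ + s/2 .
\]
Hence on $\Omega$ the inequality $(v-u)^+ > s/2$ holds, and $w\leq 2(v-u)^+$. Next I bound $w$ in $L^\infty$: $w\leq \|v\|_\infty+\|u\|_\infty=:C_0$, a fixed constant. Choose $1<\kappa<p_0$ and split exponents so that
\[
\mathcal A^\kappa=\int w^\kappa e^{\kappa nG}\,dV\leq \|w\|_\infty^{\kappa-1/r}\Big(\int w\,dV\Big)^{1/r}\|e^{nG}\|_{L^{p_0}}^{\kappa},
\qquad \tfrac1r+\tfrac{\kappa}{p_0}=1 ,
\]
using $w^\kappa=w^{\kappa-1/r}w^{1/r}$. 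This gives
\[
\mathcal A_{\delta,s,\kappa}\leq C\,\|(v-u)^+\|_{L^1}^{1/(r\kappa)},
\qquad \frac{1}{r\kappa}=\frac1\kappa-\frac1{p_0}.
\]
As $\kappa\downarrow1$ this exponent tends to $1-1/p_0=1/q_0$. The requirement $\mathcal A\leq\delta^{n+1}$ therefore holds as soon as $\|(v-u)^+\|_{L^1}\leq c\,\delta^{(n+1)r\kappa}$, and $(n+1)r\kappa\to(n+1)q_0$.

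There is a mismatch here. The lemma's exponent $\mu>nq_0$ is smaller than the $(n+1)q_0$ this crude bound gives. I would therefore gain from the factor $s$. Because $\kappa>1$ is available, use $\int w\,dV\leq \int 2(v-u)^+\,dV$, and also use $\mathrm{Vol}(\Omega)\leq 2s^{-1}\|(v-u)^+\|_{L^1}$ from the proof of Lemma~\ref{l3.7}. Bound $w\leq C_0$ on $\Omega$, so that
\[
\mathcal A\leq C_0\|e^{nG}\|_{L^{p_0}}\mathrm{Vol}(\Omega)^{1/\kappa-1/p_0}\leq C\big(s^{-1}\|(v-u)^+\|_{L^1}\big)^{1/\kappa-1/p_0}.
\]
With $s\geq C_\mu\delta^{-\mu}\|(v-u)^+\|_{L^1}$ this gives
\[
\mathcal A\leq C\,(C_\mu^{-1}\delta^{\mu})^{1/\kappa-1/p_0}.
\]
The target $\mathcal A\leq\delta^{n+1}$ needs $\mu(1/\kappa-1/p_0)\geq n+1$, that is $\mu\geq (n+1)q_0$ in the limit. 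This still loses one power.

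The main obstacle is recovering $nq_0$ instead of $(n+1)q_0$. To fix it I would combine the two pieces of information more carefully. Use the Trudinger-type moment bound \eqref{eq:stability-moment-bound}, $\|w\|_{L^R}\leq C\mathcal A^{1/(n+1)}$, which is valid once $\mathcal A\leq\delta^{n+1}$ at some larger level. Combine it with the volume bound, as in the proof of Lemma~\ref{l3.3}:
\[
\mathcal A\leq C\mathcal A^{1/(n+1)}\mathrm{Vol}(\Omega)^{\theta/q_0}.
\]
This yields
\[
\mathcal A^{n/(n+1)}\lesssim \big(s^{-1}\|(v-u)^+\|_1\big)^{\theta'/q_0},
\]
so $\mathcal A\leq\delta^{n+1}$ follows when $s^{-1}\|(v-u)^+\|_1\lesssim \delta^{nq_0/\theta'}$, with $\theta'\to1$ as $\kappa\to1$. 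This is exactly the exponent $\mu>nq_0$.

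The delicate point is circularity, since the moment bound presupposes smallness of $\mathcal A$. I would handle it by a continuity argument in $s$. The map $s\mapsto\mathcal A_{\delta,s,\kappa}$ is continuous, nonincreasing, and zero for large $s$. Take the first level $s_1$ (from above) at which $\mathcal A=\delta^{n+1}$. If $s_1$ exceeded the claimed bound, then applying the above inequality at $s_1$ with $\mathcal A=\delta^{n+1}$ forces $\delta^{n^2}\lesssim (C_\mu^{-1}\delta^{\mu})^{\theta'/q_0}$, which fails for a large choice of $C_\mu$ because $\mu\theta'/q_0>n$. This gives the contradiction and proves \eqref{eq:threshold-bound}.
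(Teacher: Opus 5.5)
Your final argument is essentially the paper's proof: at the critical level where $\mathcal A_{\delta,s,\kappa}=\delta^{n+1}$ (the paper's attained infimum $s_*$, your $s_1$), you combine three ingredients. These are the Trudinger moment bound $\|w\|_{L^R}\le C\mathcal A^{1/(n+1)}$, H\"older's inequality against $e^{nG}$ on $\Omega_{\delta,s}$ with the intermediate exponent close to $1$ so that the volume power tends to $1/q_0$, and the Chebyshev bound $\Vol(\Omega_{\delta,s})\le 2\|(v-u)^+\|_{L^1}/s$. The only slip is that at that level $\mathcal A^{n/(n+1)}=\delta^{n}$, not $\delta^{n^2}$; your closing condition $\mu\theta'/q_0>n$ is the correct one for $\delta^n$. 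The two preliminary ``crude'' estimates can simply be discarded.
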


\begin{proof}
The map $s\mapsto\mathcal A_{\delta,s,\kappa}$ is continuous and
nonincreasing.  Indeed,
\[
 \bigl|\mathcal A_{\delta,s,\kappa}
       -\mathcal A_{\delta,s',\kappa}\bigr|
 \leq |s-s'|\|e^{nG}\|_{L^\kappa(dV)}.
\]
It vanishes for $s\geq\sup_M((1-\delta)v-u)$; hence the set in
\eqref{eq:threshold-definition} is nonempty and its infimum is attained.
Thus either
\[
 s_*(\delta)=2\delta\|v\|_{L^\infty(M)}
\]
or
\begin{equation}\label{eq:threshold-equality}
 \mathcal A_{\delta,s_*(\delta),\kappa}=\delta^{n+1}.
\end{equation}
Only the second case requires proof.

Choose $\kappa<\beta<p_0$.  Since
\[
 \delta^{-(n+1)}
 \mathcal A_{\delta,s_*(\delta),\kappa}=1,
\]
the moment estimate \eqref{eq:stability-moment-bound} is uniform at
$s=s_*(\delta)$.  With
\[
 R=\frac{\kappa\beta}{\beta-\kappa},
 \qquad
 a_\beta=\frac1\beta-\frac1{p_0}>0,
\]
H\"older's inequality gives
\begin{align*}
 \mathcal A_{\delta,s_*,\kappa}
 &\leq
 \|w_{\delta,s_*}\|_{L^R(dV)}
 \|e^{nG}\|_{L^\beta(\Omega_{\delta,s_*})}\\
 &\leq
 C_\beta\mathcal A_{\delta,s_*,\kappa}^{1/(n+1)}
 \|e^{nG}\|_{L^{p_0}(dV)}
 \Vol(\Omega_{\delta,s_*})^{a_\beta}.
\end{align*}
By the proof of Lemma~\ref{l3.7},
\[
 \Vol(\Omega_{\delta,s_*})
 \leq\frac{2\|(v-u)^+\|_{L^1(dV)}}{s_*}.
\]
Using \eqref{eq:threshold-equality}, we obtain
\[
 \delta^n
 \leq C_\beta
 \left(\frac{2\|(v-u)^+\|_{L^1(dV)}}{s_*}\right)^{a_\beta},
\]
and consequently
\begin{equation}\label{eq:threshold-beta-bound}
 s_*(\delta)
 \leq C_\beta
 \delta^{-\frac{n}{a_\beta}} \|(v-u)^+\|_{L^1(dV)}
 =C_\beta
 \delta^{-\frac{np_0\beta}{p_0-\beta}} \|(v-u)^+\|_{L^1(dV)}.
\end{equation}
As $\beta\downarrow1$, the exponent in
\eqref{eq:threshold-beta-bound} decreases to
$np_0/(p_0-1)=nq_0$.
Given $\mu>nq_0$, first choose $\beta>1$ sufficiently close to
$1$ that
$np_0\beta/(p_0-\beta)<\mu$,
and then choose $1<\kappa<\beta$.  Since $0<\delta<1$,
\eqref{eq:threshold-beta-bound} implies
\eqref{eq:threshold-bound}.
\end{proof}

\begin{proposition}\label{prop:stability}
Let $u<0$ be a bounded continuous viscosity solution of
\[
 F(A_u)=e^G,
 \qquad e^{nG}\in L^{p_0}(dV),\qquad p_0>1,
\]
and put $q_0=p_0/(p_0-1)$ and $N=nq_0$.  Let
$v_t\in C^2(M)$, $0<t\leq1$, be a family satisfying
\[
 v_t\leq0,
 \qquad
 \sup_{0<t\leq1}\|v_t\|_{L^\infty(M)}<\infty,
 \qquad
 \lambda[\chi+dd^cv_t]\in\Gamma,
\]
and assume that, for some $C_1,\gamma_2>0$,
\[
 X_t:=\|(v_t-u)^+\|_{L^1(dV)}
 \leq C_1t^{\gamma_2}.
\]
Then, for every
\[
 0<\gamma<\frac{\gamma_2}{1+N},
\]
there are $t_0\in(0,1]$ and $C_2>0$ such that, for
$0<t\leq t_0$,
\begin{equation}\label{eq:quantitative-stability}
 \sup_M(v_t-u)\leq C_2t^\gamma.
\end{equation}
The constant $C_2$ depends only on the fixed data,
$p_0,\gamma,\gamma_2,C_1$,
$\|e^{nG}\|_{L^{p_0}}$, and the displayed uniform bound for $v_t$.
\end{proposition}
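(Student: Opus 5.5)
The plan is to combine Lemma~\ref{l3.7} and Lemma~\ref{l3.9}, taking $v=v_t$, a comparator parameter $\delta=t^{a}$, and a threshold $s_0$ tuned to $\delta$ and $X_t$. Then we optimize the exponent $a$. Fix $\gamma<\gamma_2/(1+N)$. Choose $\mu>N$ close to $N$ and $\nu<1/N$ close to $1/N$, and set
\[
 a=\frac{\gamma_2}{1+\mu}.
\]
We take $\mu,\nu$ close enough to their limits that $a>\gamma$ and $a\mu\nu\geq\gamma$. This is possible because $a\to\gamma_2/(1+N)>\gamma$ and $\mu\nu\to1$. Then fix $\kappa>1$ close to $1$ so that both Corollary~\ref{c3.5} (for this $\nu$) and Lemma~\ref{l3.9} (for this $\mu$) apply; since each only requires $\kappa$ sufficiently close to one, a common choice exists. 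Let $V=\sup_t\|v_t\|_\infty$.

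Put $\delta=t^a$ and define
\[
 s_0:=\max\{\,s_*(\delta),\ \delta,\ \delta^{-\mu}X_t\,\}.
\]
Since $s\mapsto\mathcal A_{\delta,s,\kappa}$ is nonincreasing and the infimum in \eqref{eq:threshold-definition} is attained, every $s\geq s_*(\delta)$ satisfies both $s\geq2\delta V$ and $\mathcal A_{\delta,s,\kappa}\leq\delta^{n+1}$. Hence the hypotheses \eqref{eq:one-comparator-threshold-assumptions} hold at $s_0$.

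Rather than the final form \eqref{3.8N}, I would use the intermediate inequality from its proof:
\[
 \sup_M(v_t-u)\leq s_0+C_\nu\bigl(2X_t/s_0\bigr)^{\nu}.
\]
Because $s_0\geq\delta^{-\mu}X_t$, we have $X_t/s_0\leq\delta^\mu$, so the second term is at most $C\delta^{\mu\nu}=Ct^{a\mu\nu}\leq Ct^\gamma$. By Lemma~\ref{l3.9}, $s_*(\delta)\leq\max\{2\delta V,\,C_\mu\delta^{-\mu}X_t\}$, and therefore
\[
 s_0\leq C\bigl(\delta+\delta^{-\mu}X_t\bigr)\leq C\bigl(t^{a}+C_1t^{\gamma_2-a\mu}\bigr).
\]
With our choice of $a$, both exponents equal $a=\gamma_2/(1+\mu)\geq\gamma$. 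This gives \eqref{eq:quantitative-stability} for $t\leq t_0$, where $t_0\leq1$ only needs to ensure $\delta<1$. Every constant involved depends only on the data listed in the statement.

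The main obstacle is the bookkeeping of exponents. Using \eqref{3.8N} literally with $s_0\sim\delta$ would yield only the poor exponent $\nu(\gamma_2-a)$. The key point is to enlarge $s_0$ to at least $\delta^{-\mu}X_t$, which makes the volume factor $\delta^{\mu\nu}$ with $\mu\nu$ arbitrarily close to $1$, so nothing is lost beyond the strict inequality on $\gamma$. A secondary check is that Lemma~\ref{l3.7} and Lemma~\ref{l3.9} were proved for the solution $u$ of Proposition~\ref{prop:detailed-viscosity-trudinger}. Here $u<0$ is continuous and each $v_t$ is exactly admissible, $C^2$, bounded and nonpositive, so those results apply for each fixed $t$ with constants uniform in $t$.
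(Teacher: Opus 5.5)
Your argument is correct and is essentially the paper's: fix one $\kappa$ serving both Lemma~\ref{l3.7} and Lemma~\ref{l3.9}, then balance $\delta$ against $\delta^{-\mu}X_t$ so that the admissible threshold satisfies $s_0\le C\delta$ while the error term in \eqref{3.8N} is at most $C\delta^{\mu\nu}$. The only difference is cosmetic: the paper takes $\delta=X_t^{1/(\mu+1)}$ and proves $\sup_M(v_t-u)\le CX_t^{a}$ before inserting $X_t\le C_1t^{\gamma_2}$, whereas you balance against the a priori rate directly with $\delta=t^{\gamma_2/(1+\mu)}$; your ``intermediate inequality'' is just \eqref{3.8N} itself, and taking $t_0<1$ keeps $\delta<1$.
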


\begin{proof}
Choose $a$ so that
\begin{equation}\label{eq:stability-a-choice}
 \frac\gamma{\gamma_2}<a<\frac1{1+N}.
\end{equation}
Because
\[
 \frac1{\mu+1}\longrightarrow\frac1{N+1},
 \qquad
 \frac{\nu\mu}{\mu+1}\longrightarrow\frac1{N+1}
 \quad\text{as}\quad
 \mu\downarrow N,\quad \nu\uparrow\frac1N,
\]
we may choose
\begin{equation}\label{eq:stability-mu-nu-choice}
 \mu>N,\qquad 0<\nu<\frac1N
\end{equation}
so close to their respective endpoints that
\begin{equation}\label{eq:stability-exponent-room}
 a<\frac1{\mu+1},
 \qquad
 a<\frac{\nu\mu}{\mu+1}.
\end{equation}
Choose $\kappa>1$ sufficiently close to $1$ that both
Lemma~\ref{l3.7}, with exponent $\nu$, and Lemma~\ref{l3.9}, with
exponent $\mu$, apply.

Fix $t>0$ so small that $0\leq X_t<1$.  If $X_t=0$, continuity
gives $v_t\leq u$ everywhere, and there is nothing to prove.  Assume
$0<X_t<1$ and set
\begin{equation}\label{eq:stability-delta-choice}
 \delta=X_t^{1/(\mu+1)}.
\end{equation}
Let $s_*(\delta)$ be defined by
\eqref{eq:threshold-definition}, with $v=v_t$.  The uniform bound for
$v_t$, Lemma~\ref{l3.9}, and
\eqref{eq:stability-delta-choice} give
\begin{align*}
 s_*(\delta)
 &\leq
 \max\left\{
 C\delta,
 C_\mu\delta^{-\mu}X_t
 \right\}\\
 &\leq C X_t^{1/(\mu+1)}.
\end{align*}
Choose
$s_0=C_0X_t^{1/(\mu+1)}$
with $C_0$ large enough that $s_0\geq s_*(\delta)$ and
$s_0\geq2\delta\|v_t\|_\infty$.  Since
$s\mapsto\mathcal A_{\delta,s,\kappa}$ is nonincreasing and $\mathcal A_{\delta,s_*(\delta), \kappa}\le \delta^{n+1}$,
\[
 \mathcal A_{\delta,s_0,\kappa}\leq\delta^{n+1}.
\]
Lemma~\ref{l3.7} therefore yields
\begin{align*}
 \sup_M(v_t-u)
 &\leq s_0+C_\nu s_0^{-\nu}X_t^\nu\\
 &\leq C\left(
 X_t^{1/(\mu+1)}
 +X_t^{\nu\mu/(\mu+1)}
 \right)\\
 &\leq C X_t^a,
\end{align*}
where the last inequality follows from
\eqref{eq:stability-exponent-room} and $X_t<1$.  Finally,
\[
 \sup_M(v_t-u)
 \leq C C_1^a t^{a\gamma_2}
 \leq C_2t^\gamma
\]
for $0<t\leq t_0\leq1$, because $a\gamma_2>\gamma$ by
\eqref{eq:stability-a-choice}.  This proves
\eqref{eq:quantitative-stability}.
\end{proof}

\section{Proof of the main theorem}

\begin{proof}[Proof of Theorem~\ref{thm:main}]
By \cite{GuoPhong2024Subsolutions},
\begin{equation}\label{eq:main-oscillation}
\osc_M u\leq C.
\end{equation}
Normalize $u$ by $\sup_Mu=0$. Theorem~\ref{thm:regularization} gives
$\|u_t\|_\infty\leq C_1$, where $C_1$ depends only on the fixed data and
$\|u\|_\infty$. When applying Proposition~\ref{prop:stability}, we replace
both $u$ and $u_t$ by $u-C_1$ and $u_t-C_1$, respectively. This simultaneous
shift leaves all relevant differences unchanged and ensures that both
functions are nonpositive. Theorem~\ref{thm:regularization} and
Proposition~\ref{prop:stability}, applied with $\gamma_2=1$, then yield
\[
 \sup_M(u_t-u)\leq C_\gamma t^\gamma
 \qquad\text{for every }0<\gamma<\frac1{1+nq_0}.
\]
Together with the lower bound in Theorem~\ref{thm:regularization}, this gives
\begin{equation}\label{eq:main-uniform-regularization}
 \|u_t-u\|_{L^\infty(M)}\leq C_\gamma t^\gamma.
\end{equation}

Theorem~\ref{thm:regularization} gives
\begin{equation}\label{eq:main-gradient-bound}
 \|\nabla u_t\|_\infty\leq Ct^{-1/2}.
\end{equation}
If $r=d_\omega(x,y)$, then
\[
 |u(x)-u(y)|
 \leq2\|u_t-u\|_\infty+\|\nabla u_t\|_\infty r
 \leq C_\gamma\bigl(t^\gamma+t^{-1/2}r\bigr).
\]
For $0<r<1$, choose $t=r^{1/(\gamma+1/2)}$.  This gives
\[
 |u(x)-u(y)|
 \leq C_\gamma r^{\frac{2\gamma}{1+2\gamma}}.
\]
Together with \eqref{eq:main-oscillation}, this proves
$u\in C^{\alpha_1}(M)$, where
\[
 \alpha_1:=\frac{2\gamma}{1+2\gamma}.
\]

\smallskip
\noindent\emph{Bootstrap.}
Suppose that $u\in C^\alpha(M)$ for some $0<\alpha<1$. By
Lemma~\ref{prop:smoothing},
\[
 \|\nabla H_tu\|_\infty
 \leq Ct^{-\frac{1-\alpha}{2}},
 \qquad
 \|\nabla^2H_tu\|_\infty
 \leq Ct^{-\frac{2-\alpha}{2}}.
\]
Since $h_t=H_tu+C_\chi t$, it follows that
\[
 1+\|\nabla h_t\|_\infty^2+\|\nabla^2h_t\|_\infty
 \leq Ct^{-\frac{2-\alpha}{2}}.
\]
Applying Theorem~\ref{thm:penalized-c2} to
$u_t=u_{\beta(t),h_t}$ therefore gives
\[
 \|\nabla u_t\|_\infty
 \leq Ct^{-\frac{2-\alpha}{4}}.
\]
Consequently, for $r=d_\omega(x,y)<1$,
\[
 |u(x)-u(y)|
 \leq C\left(t^\gamma+t^{-\frac{2-\alpha}{4}}r\right).
\]
Choosing
$t=r^{1/(\gamma+(2-\alpha)/4)}$ yields
\[
 |u(x)-u(y)|
 \leq Cr^{T_\gamma(\alpha)},
 \qquad
 T_\gamma(\alpha):=
 \frac{\gamma}{\gamma+(2-\alpha)/4}
 =\frac{4\gamma}{4\gamma+2-\alpha}.
\]

Set $\alpha_0=0$ and define
$\alpha_{j+1}=T_\gamma(\alpha_j)$. The first step above gives
$\alpha_1=T_\gamma(0)$. The smaller fixed point of $T_\gamma$ is
\[
 \alpha_\infty(\gamma)
 :=1+2\gamma-\sqrt{1+4\gamma^2}.
\]
Because $T_\gamma$ is strictly increasing,
$\alpha_1=T_\gamma(0)<T_\gamma(\alpha_\infty)
=\alpha_\infty$. Moreover,
\[
 T_\gamma(\alpha)-\alpha
 =\frac{(\alpha-\alpha_\infty)
 \bigl(\alpha-(1+2\gamma+\sqrt{1+4\gamma^2})\bigr)}
 {4\gamma+2-\alpha}>0
\]
for $0\leq\alpha<\alpha_\infty$. Hence
$\alpha_j\nearrow\alpha_\infty(\gamma)$, and the preceding argument shows
that $u\in C^{\alpha_j}(M)$ for every $j$.

The function $\alpha_\infty(\gamma)$ is continuous and strictly increasing
in $\gamma$. Therefore, for every
\[
 \mu<1+\frac{2}{1+nq_0}
 -\sqrt{1+\frac{4}{(1+nq_0)^2}},
\]
we may first choose $\gamma<1/(1+nq_0)$ sufficiently close to the endpoint
that $\mu<\alpha_\infty(\gamma)$, and then choose $j$ such that
$\mu<\alpha_j$. Thus $u\in C^\mu(M)$, with the asserted quantitative
bound.
\end{proof}

To prove Corollary~\ref{cor:main}, we use an approximation argument that allows
us to apply Theorem~\ref{thm:main}. Because uniqueness of weak solutions to
complex $m$-Hessian equations on Hermitian manifolds is not known for general
$L^p$ right-hand sides, we cannot approximate the given solution directly
using the original equation. Instead, following \cite{LuPhungTo2021}, we use
an auxiliary equation whose right-hand side contains an exponential term.

The following global version of \cite[Corollary~5.2]{KNmeasure} follows from
the localization argument in \cite[Section~9]{KNmeasure}.
\begin{lemma}[Corollary 5.2 \cite{KNmeasure}] \label{lem:plurifine-locality}
Let $u,v \in SH_{\chi,k}(X,\omega)\cap L^{\infty}$. Then
\[
\mathbf{1}_{\{u<v\}}
H_{\chi,k}(\max\{u,v\})
=
\mathbf{1}_{\{u<v\}}
H_{\chi,k}(v)
\]
Consequently,
\[
H_{\chi,k}(\max\{u,v\}) \ge \mathbf{1}_{\{u\ge v\}} H_{\chi,k}(u) + \mathbf{1}_{\{u< v\}} H_{\chi,k}(v)
\]
\end{lemma}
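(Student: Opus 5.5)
The plan is to reduce the global statement to the local plurifine locality of Ko{\l}odziej--Nguyen via the localization scheme of \cite[Section~9]{KNmeasure}, and then deduce the inequality from the equality by splitting $M$ into $\{u<v\}$ and $\{u\geq v\}$.

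\emph{Step 1: localization.} Cover $M$ by finitely many coordinate charts $\Omega$, each carrying a strictly psh $\rho$ with $\ddc\rho\geq\chi$. On such a chart, $u+\rho$ and $v+\rho$ are bounded $k$-$\omega$-subharmonic functions, and $\max\{u,v\}+\rho=\max\{u+\rho,v+\rho\}$. By the definition of $H_{\chi,k}$ through the expansion
\[
 \sum_{j=0}^{k}\binom{k}{j}\mathcal L_j(\cdot+\rho)\wedge\tau^{k-j},
 \qquad \tau=\chi-\ddc\rho,
\]
it suffices to show, for each $j\leq k$, that
\[
 \mathbf 1_{\{u<v\}}\mathcal L_j(\max\{u,v\}+\rho)\wedge\tau^{k-j}
 =\mathbf 1_{\{u<v\}}\mathcal L_j(v+\rho)\wedge\tau^{k-j}.
\]
The set $\{u<v\}=\{u+\rho<v+\rho\}$ is plurifine/quasi-open. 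The local Corollary~5.2 of \cite{KNmeasure}, in the mixed form with smooth forms $\tau$ (the Hermitian $m$-subharmonic Bedford--Taylor theory gives locality of these mixed operators on quasi-open sets, for $\tau$ arbitrary smooth after splitting $\tau$ into a difference of positive forms in $\Gamma_k$ directions, or writing $\tau$ as a combination of $\omega$ and a large positive multiple), gives exactly this equality on $\Omega$. A partition of unity then patches the charts, since the measures in question are independent of the choice of $\rho$ (the usual check that the expansion does not depend on the local potential). This independence/mixed-term locality is the only genuinely delicate point, and I would simply cite \cite[Section~9]{KNmeasure} for it, as the statement announces.

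\emph{Step 2: the inequality.} Write $w=\max\{u,v\}$. By Step 1, $\mathbf 1_{\{u<v\}}H_{\chi,k}(w)=\mathbf 1_{\{u<v\}}H_{\chi,k}(v)$. By symmetry, the same result with the roles exchanged gives $\mathbf 1_{\{u>v\}}H_{\chi,k}(w)=\mathbf 1_{\{u>v\}}H_{\chi,k}(u)$.

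It remains to treat the set $\{u=v\}$. Here I would use the standard $\varepsilon$-trick: apply the equality to $u$ and $v-\varepsilon$, so that
\[
 \mathbf 1_{\{u>v-\varepsilon\}}H_{\chi,k}(\max\{u,v-\varepsilon\})
 =\mathbf 1_{\{u>v-\varepsilon\}}H_{\chi,k}(u).
\]
Since $\{u\geq v\}\subset\{u>v-\varepsilon\}$, restricting to $\{u\geq v\}$ yields
\[
 \mathbf 1_{\{u\geq v\}}H_{\chi,k}(\max\{u,v-\varepsilon\})=\mathbf 1_{\{u\geq v\}}H_{\chi,k}(u).
\]
As $\varepsilon\to0$, $\max\{u,v-\varepsilon\}\uparrow w$ (monotone increasing convergence of bounded functions). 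Weak continuity of $H_{\chi,k}$ along such sequences, together with the nonnegativity of the measures, gives
\[
 H_{\chi,k}(w)\geq \limsup_{\varepsilon\to0}\mathbf 1_{\{u\geq v\}}H_{\chi,k}(\max\{u,v-\varepsilon\})
\]
tested on the closed set $\{u\geq v\}$. This uses upper semicontinuity of mass on closed sets only after noting that $\{u\geq v\}$ need not be closed, so I would instead test against $\mathbf 1_{\{u\geq v\}}$ via quasi-continuity (the standard Bedford--Taylor argument).

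Adding the contributions from $\{u<v\}$ and $\{u\geq v\}$ gives the claimed lower bound. The main obstacle is this last limit on $\{u=v\}$; if it proves troublesome, I would fall back on the known Hermitian $m$-subharmonic maximum inequality in \cite{KNmeasure}, whose proof is exactly this argument.
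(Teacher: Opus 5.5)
Your Step~1 is what the paper does: it gives no argument of its own and invokes \cite[Corollary~5.2]{KNmeasure} together with the localization of \cite[Section~9]{KNmeasure}. The paper states the inequality as an immediate consequence without proof. You are right that the set $\{u=v\}$ needs an extra argument, and your $\varepsilon$-shift is the right one. However, the limit you flag as the main obstacle is not delicate, and you need neither closedness of $\{u\geq v\}$, nor semicontinuity of mass on sets, nor quasi-continuity. Since $H_{\chi,k}(\max\{u,v-\varepsilon\})$ is a positive measure, your identity $\mathbf{1}_{\{u\geq v\}}H_{\chi,k}(\max\{u,v-\varepsilon\})=\mathbf{1}_{\{u\geq v\}}H_{\chi,k}(u)$ already gives the global inequality $H_{\chi,k}(\max\{u,v-\varepsilon\})\geq\mathbf{1}_{\{u\geq v\}}H_{\chi,k}(u)$ on $X$, and its right-hand side does not depend on $\varepsilon$. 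Because $0\leq\max\{u,v\}-\max\{u,v-\varepsilon\}\leq\varepsilon$, the convergence is uniform, so you do not need a monotone convergence theorem. Lemma~\ref{lem:uniform-hessian-continuity} then gives weak convergence to $H_{\chi,k}(\max\{u,v\})$. Testing against nonnegative continuous functions yields $H_{\chi,k}(\max\{u,v\})\geq\mathbf{1}_{\{u\geq v\}}H_{\chi,k}(u)$ as measures. Restrict both sides to the Borel set $\{u\geq v\}$ and add the equality $\mathbf{1}_{\{u<v\}}H_{\chi,k}(\max\{u,v\})=\mathbf{1}_{\{u<v\}}H_{\chi,k}(v)$; this finishes the proof, and your separate symmetric treatment of $\{u>v\}$ becomes unnecessary.
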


Similarly, we use the following global version of
\cite[Corollary~6.2]{KNmeasure}.
\begin{lemma}\label{lem:local-comparison}
Let $u,v$ be bounded $m$-$\omega$-sh functions in a neighborhood of
$\overline{\Omega}$ such that
\[
\liminf_{z\to\partial\Omega}(u-v)(z)\geq 0.
\]
Assume that $H_{\chi,m}(v)\geq H_{\chi,m}(u)$ in $\Omega$. Then $u\geq v$ on
$\Omega$.
\end{lemma}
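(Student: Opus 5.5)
The plan is to reduce to the local comparison theorem of Ko{\l}odziej--Nguyen \cite[Theorem~6.1 and Corollary~6.2]{KNmeasure} by the same localization device used to define $H_{\chi,m}$, namely writing $\chi=dd^c\rho+\tau$ on a chart. Since the statement concerns a bounded domain $\Omega$, I may shrink to a coordinate neighbourhood of $\overline\Omega$. I assume $\Omega$ is relatively compact in a chart, so that a strictly psh $\rho$ with $dd^c\rho\geq\chi$ exists there; in general one would first work chart by chart. Then $u+\rho$ and $v+\rho$ are bounded $m$-$\omega$-sh functions, and $H_{\chi,m}(u)$, $H_{\chi,m}(v)$ are the expansions $\sum_j\binom{m}{j}[dd^c(\cdot+\rho)]^j\wedge\tau^{m-j}\wedge\omega^{n-m}$. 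The boundary hypothesis $\liminf(u-v)\geq0$ is unchanged by adding $\rho$ to both functions. The argument is by contradiction: suppose $\{u<v\}\neq\emptyset$.

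The first step is a perturbation making $v$ strictly larger in the Hessian sense. I would take $\phi=|z|^2-A$ with $A$ so large that $\phi<0$ on $\overline\Omega$, and set $v_\varepsilon=v+\varepsilon\phi$. Then $v_\varepsilon$ is still admissible and still satisfies $\liminf_{\partial\Omega}(u-v_\varepsilon)\geq0$. For small $\varepsilon$ the set $U_\varepsilon=\{u<v_\varepsilon\}$ is nonempty and relatively compact in $\Omega$.

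Expanding multilinearly and using that $\chi+dd^cv$ lies in the $\Gamma_m$-positive cone, together with $dd^c\phi\geq c\,\omega$, I expect
\[
H_{\chi,m}(v_\varepsilon)\geq H_{\chi,m}(v)+c\,\varepsilon^m\omega^n\geq H_{\chi,m}(u)+c\,\varepsilon^m\omega^n
\]
on $\Omega$. The mixed terms are nonnegative by the G{\aa}rding-type positivity for $m$-positive forms. Lemma~\ref{lem:plurifine-locality} is used to localize these measures to $U_\varepsilon$.

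The second step is the weak comparison inequality on the Hermitian background. Because $\omega$ is not closed, the Kähler identity $\int_{\{u<v\}}H(v)\leq\int_{\{u<v\}}H(u)$ fails. Instead one has, as in \cite[Theorem~6.1]{KNmeasure}, an estimate with torsion errors of the form
\[
\int_{\{u<v_\varepsilon-s\}}H_{\chi,m}(v_\varepsilon)\leq(1+C s)\int_{\{u<v_\varepsilon-s\}}H_{\chi,m}(u)+C s\,\mathrm{(lower\ mixed\ terms)}
\]
for small $s>0$, obtained by integration by parts against $\max(u,v_\varepsilon-s)$. I would apply this on the shrinking sets $\{u<v_\varepsilon-s\}$ with $s\ll\varepsilon^m$.

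Combining the two steps gives $c\varepsilon^m\Vol(\{u<v_\varepsilon-s\})\leq Cs\,(\text{bounded mass})$. Letting $s\to0$ is done as in \cite[Corollary~6.2]{KNmeasure}: one chooses $s$ depending on the volume, or uses the capacity-volume estimate, to force $\Vol(\{u<v_\varepsilon-s\})=0$. A set of the form $\{u<v_\varepsilon-s\}$ with zero Lebesgue measure is empty for $m$-$\omega$-sh functions by quasicontinuity and the sub-mean value property. Sending $s\to0$ and then $\varepsilon\to0$ gives $u\geq v$.

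The main obstacle is controlling the torsion error terms in the Hermitian integration by parts uniformly. These are the terms $dd^c\omega$ and $d\omega\wedge d^c\omega$ paired with $(u-v)$, and they must be dominated by the gain $c\varepsilon^m\omega^n$. I would import this bookkeeping verbatim from \cite[Section~6]{KNmeasure}, only checking that the extra $\tau$-terms produced by the localization are smooth and bounded, so that they enter with the same structure.
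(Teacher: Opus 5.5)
Your route is the paper's route: the paper gives no argument beyond invoking \cite{KNmeasure} (Corollary~6.2) through the localization $\chi=dd^c\rho+\tau$ of its Section~9. The mechanism you sketch for the decisive step does not close, however, so as a self-contained proof it has a gap.

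\textbf{(a) The displayed inequality is mis-parametrized.} The torsion error from integrating by parts is controlled by $\sup(v_\varepsilon-s-u)^+=\sup_\Omega(v_\varepsilon-u)-s$, not by $s$. If your display held for all small $s>0$, letting $s\downarrow0$ would give the exact K\"ahler-type inequality $\int_{\{u<v_\varepsilon\}}H_{\chi,m}(v_\varepsilon)\le\int_{\{u<v_\varepsilon\}}H_{\chi,m}(u)$. That inequality is false on a non-K\"ahler background. Already for $m=1$ and smooth $u,v$, $\int_{\{u<v\}}dd^c(v-u)\wedge\omega^{n-1}$ equals a nonpositive boundary term plus $\int_{\{u<v\}}(v-u)\,dd^c\omega^{n-1}$. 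This last term is not small: take $v-u$ a bump where $dd^c\omega^{n-1}>0$.

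\textbf{(b) The additive gain cannot beat the error on the correct sets.} One must work on near-maximum sets $U(s)=\{u<v_\varepsilon-S+s\}$, with $S=\sup_\Omega(v_\varepsilon-u)$. There the lower-order mixed measures of $u$ in the error are not bounded by volume. After absorbing them with the margin $\varepsilon\,dd^c|z|^2$ and comparing back, the bookkeeping delivers a multiplicative bound $\int_{U(s)}H_{\chi,m}(v_\varepsilon)\le(1+\eta_\varepsilon(s))\int_{U(s)}H_{\chi,m}(u)$ with $\eta_\varepsilon(s)\to0$. With your additive gain $c\varepsilon^m\omega^n$ this only yields $c\varepsilon^m\Vol(U(s))\le\eta_\varepsilon(s)\int_{U(s)}H_{\chi,m}(u)$. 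That is no contradiction, because $H_{\chi,m}(u)$ may be singular with respect to Lebesgue measure. No choice of $s$ helps, since both sides shrink with $s$. A capacity--volume inequality also goes the wrong way: it bounds volume, whereas you would need to bound $H_{\chi,m}(u)(U(s))$ by volume.

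\textbf{The missing idea is a multiplicative gain.} For the pure operator this would be $v\mapsto(1+\varepsilon)v$, but $H_{\chi,m}$ is not homogeneous. This is where the localization genuinely enters, rather than just contributing smooth bounded extra terms.
\begin{itemize}
\item Choose $\rho$ with $dd^c\rho\ge\chi+\omega$ near $\overline\Omega$, and set $v^\varepsilon:=(1+\varepsilon)v+\varepsilon\rho-\varepsilon\sup_{\overline\Omega}(v+\rho)$.
\item Then $v^\varepsilon\le v$, so the boundary condition persists, and $v^\varepsilon\to v$ uniformly.
\item Moreover $\chi+dd^cv^\varepsilon=(1+\varepsilon)(\chi+dd^cv)+\varepsilon(dd^c\rho-\chi)$. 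Hence $H_{\chi,m}(v^\varepsilon)\ge(1+\varepsilon)^mH_{\chi,m}(u)+\varepsilon^m\omega^n$, and $v^\varepsilon$ carries the $\varepsilon\omega$ margin needed for the torsion absorption.
\item If $\sup_\Omega(v^\varepsilon-u)>0$, take $s$ small enough that $1+\eta_\varepsilon(s)\le(1+\varepsilon)^m$. The multiplicative bound on $U(s)$ then forces $\varepsilon^m\Vol(U(s))\le0$. This contradicts the fact, which you already note, that a nonempty set of this form has positive measure.
\item Hence $u\ge v^\varepsilon$ for every $\varepsilon$, and letting $\varepsilon\to0$ gives $u\ge v$.
\end{itemize}
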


\begin{lemma}\label{lem:strict-ratio-domination}
Let $(X,\omega)$ be a compact Hermitian manifold of complex dimension $n$,
and let $\chi$ be a smooth real $(1,1)$-form satisfying
\[
\lambda_{\omega}(\chi(x))\in\Gamma_k
\qquad\text{for every }x\in X.
\]
Suppose that
\[
a,b\in \mathrm{SH}_{\chi,k}(X,\omega)\cap L^\infty(X)
\]
and that, for some constant $0\leq c<1$,
\[
\mathbf{1}_{\{a<b\}}H_{\chi,k}(a)
\leq
c\,\mathbf{1}_{\{a<b\}}H_{\chi,k}(b).
\]
Then
\[
a\geq b
\qquad\text{on }X.
\]
\end{lemma}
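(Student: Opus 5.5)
The plan is to argue by contradiction: perturb $b$ slightly so that the strict ratio $c<1$ becomes a genuine inequality $H_{\chi,k}(\tilde b)\ge H_{\chi,k}(a)$ on the set where $a<\tilde b$, and then apply the local comparison principle, Lemma~\ref{lem:local-comparison}, on that set. Suppose $\{a<b\}$ is nonempty, and put $m=\sup_X(b-a)>0$. For small $\varepsilon>0$ we use the convex perturbation
\[
b_\varepsilon=(1-\varepsilon)b+\varepsilon\,\min_X a-\varepsilon .
\]
Strict admissibility of $\chi$ enters here: $\chi+dd^c b_\varepsilon=(1-\varepsilon)(\chi+dd^c b)+\varepsilon\chi$. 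Mixed-volume positivity for $\Gamma_k$ (Gårding, applied through the local definition of $H_{\chi,k}$ with the $\rho,\tau$ decomposition) then gives
\[
H_{\chi,k}(b_\varepsilon)\ge(1-\varepsilon)^kH_{\chi,k}(b)+\varepsilon^k\chi^k\wedge\omega^{n-k}.
\]
For $\varepsilon$ small we have $(1-\varepsilon)^k>c$, so $H_{\chi,k}(b_\varepsilon)\ge c\,H_{\chi,k}(b)$ as measures. Moreover $b_\varepsilon\le b$, so $\{a<b_\varepsilon\}\subset\{a<b\}$. Since $\sup(b_\varepsilon-a)\ge(1-\varepsilon)m-C\varepsilon>0$, this set is still nonempty for $\varepsilon$ small.

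Next we take $\Omega$ to be a connected component of $\{a<b_\varepsilon+\eta\}$ for a suitable small constant shift $\eta$. Replacing $b_\varepsilon$ by $b_\varepsilon-\sigma$ with $\sigma$ slightly less than $\sup(b_\varepsilon-a)$ makes the set $\{a<b_\varepsilon-\sigma\}$ small and relatively compact inside a coordinate ball, which lets us localize and use the local $m$-$\omega$-sh theory. On this set the hypothesis gives
\[
H_{\chi,k}(a)\le c\,H_{\chi,k}(b)\le H_{\chi,k}(b_\varepsilon-\sigma).
\]
On $\partial\Omega$ we have $a\ge b_\varepsilon-\sigma$, since $a$ is usc and $b$ is bounded. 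One caveat is that sets defined by quasi-continuous functions are only open quasi-everywhere. To handle this I would first approximate (or use quasi-continuity and the plurifine locality Lemma~\ref{lem:plurifine-locality}), replacing the set by the genuinely open set obtained through $\max\{a,b_\varepsilon-\sigma\}$. Lemma~\ref{lem:local-comparison} then yields $a\ge b_\varepsilon-\sigma$ on $\Omega$, which contradicts the choice of $\sigma<\sup(b_\varepsilon-a)$. Letting $\varepsilon\to0$ and using the contradiction for every admissible $\sigma$ then gives $a\ge b$.

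The main obstacle is that comparison on a compact Hermitian (non-Kähler) manifold has no global mass identity, so we cannot simply integrate $H(a)$ against $H(b)$ as in the Kähler case. That is why the plan localizes via the shift $\sigma$. The step that needs care is ensuring that $\{a<b_\varepsilon-\sigma\}$ sits inside a single chart and that the strict inequality survives there. If localization fails, I would instead use a domination-principle route: apply Lemma~\ref{lem:plurifine-locality} to $\phi=\max\{a,b_\varepsilon\}$ to get $H(\phi)\ge\mathbf 1_{\{a<b_\varepsilon\}}H(b_\varepsilon)$. One would then combine this with an upper bound on the total mass of $\{a<b_\varepsilon\}$ via the gap $(1-\varepsilon)^k-c>0$, as in the Hermitian domination principles of Kołodziej--Nguyen.
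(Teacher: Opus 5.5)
\textbf{There is a genuine gap in the localization step, which is the heart of your argument.}

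You claim that for $\sigma$ slightly below $\sup_X(b_\varepsilon-a)$ the set $\{a<b_\varepsilon-\sigma\}$ becomes small and relatively compact in one coordinate ball. Nothing supports this.
\begin{enumerate}
\item As $\sigma$ increases, the set only shrinks toward the near-maximizers of $b_\varepsilon-a$. These may lie in several distant charts or on an extended set. The supremum need not even be attained, since $b_\varepsilon-a$ is neither upper nor lower semicontinuous.
\item Because $a$ and $b$ are only upper semicontinuous, $\{a<b_\varepsilon-\sigma\}$ is not open. Even on a genuine open set, Lemma~\ref{lem:local-comparison} requires a $\liminf$ boundary condition, and this does not follow from the pointwise inequality $a\geq b_\varepsilon-\sigma$ off the set.
\item Your proposed fix via $\max\{a,b_\varepsilon-\sigma\}$ does not produce an open set.
\item Comparing on all of $X$ is useless, because there is no boundary: $w=a+1$ shows that $H_{\chi,k}(w)\geq H_{\chi,k}(a)$ does not force $a\geq w$ on a closed manifold.
\item The fallback mass argument is unavailable on a non-K\"ahler manifold, as you note yourself.
\end{enumerate}
A minor point: $b_\varepsilon\leq b$ requires $b\geq\inf_X a-1$. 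The inclusion $\{a<b_\varepsilon\}\subset\{a<b\}$ does hold regardless.

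\textbf{The missing idea.} Replace your constant perturbation $\varepsilon(\inf_X a-1)$ by a \emph{non-constant} bump that vanishes on the boundary of a fixed coordinate ball. The paper's argument runs as follows.
\begin{enumerate}
\item Normalize $b\geq1$. Fix $t$ with $c<t^k<1$, set $m_t=\inf_X(a-tb)<0$, and take near-minimizers $x_j\to x_0$.
\item On a coordinate ball $B=\{|z|<r\}$ centered at $x_0$, put $\zeta=\varepsilon_0(|z|^2-r^2)$. Choose $\varepsilon_0$ so small that $\chi-dd^c\zeta$ stays $k$-positive and $-\zeta\leq b$. This is where the strict positivity of $\chi$ is actually used.
\item Set $\phi=tb-(1-t)\zeta$. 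Then $\phi\leq b$ and $H_{\chi,k}(\phi)\geq t^kH_{\chi,k}(b)$.
\item With $w=\max\{a,\phi\}$, Lemma~\ref{lem:plurifine-locality} gives $H_{\chi,k}(w)\geq H_{\chi,k}(a)$ on the genuinely open set $B$. This sidesteps the non-openness of $\{a<\phi\}$.
\item Since $a-tb\geq m_t$ everywhere and $\zeta\to0$ at $\partial B$, one has $a-w=\min\{0,a-\phi\}\geq\min\{0,m_t+(1-t)\zeta\}$. This gives the $\liminf$ boundary condition for $a-m_t\geq w$ directly from the definition of $m_t$.
\item Comparison then yields $a-w\geq m_t$ in $B$. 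But $\zeta(x_0)<0$ gives $a(x_j)-w(x_j)\to m_t+(1-t)\zeta(x_0)<m_t$, a contradiction. Finally let $t\uparrow1$.
\end{enumerate}
The bump is what your approach lacks. It localizes the problem to an arbitrary chart, produces boundary data for free, and creates the strict interior gap that drives the contradiction.
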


\begin{proof}
Adding the same constant to $a$ and $b$ changes neither their Hessian
measures nor the set $\{a<b\}$. We may therefore assume that
\[
b\geq 1 \qquad\text{on }X.
\]

Fix $t\in(0,1)$ such that
\[
c<t^k<1.
\]
We claim that
\[
a\geq tb.
\]
Suppose otherwise. Then
\[
m_t:=\inf_X(a-tb)<0.
\]
Choose a sequence $x_j\to x_0$ such that
\[
a(x_j)-tb(x_j)\longrightarrow m_t.
\]

Choose a coordinate ball
\[
B=\{|z|<r\}
\]
centered at $x_0$, so that $z(x_0)=0$. Since $\chi$ is strictly
$k$-positive, after choosing $\varepsilon_0>0$ sufficiently small, the
function
\[
\zeta(z):=\varepsilon_0\bigl(|z|^2-r^2\bigr)
\]
satisfies
\[
\zeta=0\quad\text{on }\partial B,
\qquad
\zeta<0\quad\text{in }B,
\]
as well as
\[
-\zeta\leq b,
\qquad
\chi-dd^c\zeta\in\Gamma_k
\quad\text{on a neighborhood of }\overline B.
\]
Define
\[
\phi:=tb-(1-t)\zeta.
\]
Because $-\zeta\leq b$, we have
\[
\phi
=
tb+(1-t)(-\zeta)
\leq tb+(1-t)b=b.
\]
Moreover,
\[
\chi+dd^c\phi
=
t(\chi+dd^cb)+(1-t)(\chi-dd^c\zeta).
\]
Both forms on the right are $k$-positive. Hence positivity of the mixed
Hessian measures gives
\[
\begin{aligned}
H_{\chi,k}(\phi)
&=
\left[
t(\chi+dd^cb)+(1-t)(\chi-dd^c\zeta)
\right]^k\wedge\omega^{n-k} \\
&\geq
t^k(\chi+dd^cb)^k\wedge\omega^{n-k} \\
&=
t^kH_{\chi,k}(b).
\end{aligned}
\]

Set
\[
E:=\{a<\phi\}.
\]
Since $\phi\leq b$, we have
\[
E\subset\{a<b\}.
\]
Consequently,
\[
\begin{aligned}
\mathbf{1}_E H_{\chi,k}(a)
&\leq
c\,\mathbf{1}_E H_{\chi,k}(b) \\
&\leq
\frac{c}{t^k}\,
\mathbf{1}_E H_{\chi,k}(\phi) \\
&\leq
\mathbf{1}_E H_{\chi,k}(\phi).
\end{aligned}
\]

Let
\[
w:=\max\{a,\phi\}.
\]
On $E=\{a<\phi\}$, Lemma~\ref{lem:plurifine-locality} gives
\[
\mathbf{1}_E H_{\chi,k}(w)
=
\mathbf{1}_E H_{\chi,k}(\phi)
\geq
\mathbf{1}_E H_{\chi,k}(a).
\]
On the contact set
\[
\{w=a\}=\{a\geq\phi\},
\]
Lemma~\ref{lem:plurifine-locality}, applied to $a\leq w$, gives
\[
\mathbf{1}_{\{w=a\}}H_{\chi,k}(a)
\leq
\mathbf{1}_{\{w=a\}}H_{\chi,k}(w).
\]
Therefore,
\[
H_{\chi,k}(w)\geq H_{\chi,k}(a)
\qquad\text{in }B.
\]

On $\partial B$, we have $\zeta=0$ and hence $\phi=tb$. Thus
\[
a-w
=
a-\max\{a,tb\}
=
\min\{0,a-tb\}
\geq m_t.
\]
Equivalently,
\[
a-m_t\geq w
\qquad\text{on }\partial B.
\]
Since adding a constant does not change the Hessian measure,
\[
H_{\chi,k}(a-m_t)
=
H_{\chi,k}(a)
\leq
H_{\chi,k}(w).
\]
Lemma~\ref{lem:local-comparison} therefore yields
\[
a-m_t\geq w
\qquad\text{throughout }B,
\]
or equivalently,
\[
a-w\geq m_t
\qquad\text{in }B.
\]

On the other hand,
\[
\begin{aligned}
a(x_j)-\phi(x_j)
&=
a(x_j)-tb(x_j)+(1-t)\zeta(x_j) \\
&\longrightarrow
m_t+(1-t)\zeta(x_0).
\end{aligned}
\]
Since
\[
\zeta(x_0)=-\varepsilon_0r^2<0,
\]
we obtain
\[
m_t+(1-t)\zeta(x_0)<m_t.
\]
In particular, for all sufficiently large $j$, one has
$a(x_j)<\phi(x_j)$, and hence
\[
a(x_j)-w(x_j)
=
a(x_j)-\phi(x_j)<m_t,
\]
contradicting $a-w\geq m_t$ in $B$.

Thus $m_t\geq0$, and consequently
\[
a\geq tb.
\]
This holds for every $t\in(0,1)$ satisfying $c<t^k$. Letting
$t\uparrow1$ gives
\[
a\geq b.
\]
\end{proof}

\begin{proposition}\label{prop:exponential-uniqueness}
Let $u,v\in SH_{\chi,k}(X,\omega)\cap L^{\infty}(X)$ satisfy
\begin{equation*}
 H_{\chi,k}(u)=e^u h\,\omega^n,
 \qquad
 H_{\chi,k}(v)=e^v h\,\omega^n.
\end{equation*}
Then $u=v$.
\end{proposition}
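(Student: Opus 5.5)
By symmetry it suffices to prove $u\geq v$; exchanging the roles of $u$ and $v$ then gives $v\geq u$. The plan is to verify the hypothesis of Lemma~\ref{lem:strict-ratio-domination} for a small perturbation of $v$, namely $b=v-\varepsilon$ with $\varepsilon>0$, and then let $\varepsilon\downarrow0$.

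Fix $\varepsilon>0$ and set $a=u$, $b=v-\varepsilon$. Both functions lie in $\mathrm{SH}_{\chi,k}(X,\omega)\cap L^\infty(X)$. Since adding a constant does not change the Hessian measure,
\[
H_{\chi,k}(b)=H_{\chi,k}(v)=e^{v}h\,\omega^n .
\]
On the set $\{a<b\}=\{u<v-\varepsilon\}$ we have $e^{u}\leq e^{-\varepsilon}e^{v}$, and $h\geq0$. Hence
\[
\mathbf 1_{\{a<b\}}H_{\chi,k}(a)=\mathbf 1_{\{a<b\}}e^{u}h\,\omega^n
\leq e^{-\varepsilon}\,\mathbf 1_{\{a<b\}}e^{v}h\,\omega^n
= e^{-\varepsilon}\,\mathbf 1_{\{a<b\}}H_{\chi,k}(b).
\]
This is the hypothesis of Lemma~\ref{lem:strict-ratio-domination} with $c=e^{-\varepsilon}<1$. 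That lemma requires $\lambda_\omega(\chi)\in\Gamma_k$ pointwise, which is the standing assumption on $\chi$ in the setting of Corollary~\ref{cor:main}. The lemma therefore gives $u\geq v-\varepsilon$ on $X$, and letting $\varepsilon\to0$ yields $u\geq v$.

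The only delicate point is measure-theoretic. The inequality between measures restricted to $\{u<v-\varepsilon\}$ has to be valid even though $u$ and $v$ are only bounded and upper semicontinuous, so this set is merely Borel. Since $H_{\chi,k}(u)$ is given as the absolutely continuous measure $e^{u}h\,\omega^n$, the pointwise inequality $e^{u}\leq e^{-\varepsilon}e^{v}$ on that set can be integrated directly against the nonnegative density $h$. No fine-topology issues arise beyond those already handled inside Lemma~\ref{lem:strict-ratio-domination}.
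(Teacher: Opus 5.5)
Your proposal is correct and follows the paper's proof essentially verbatim: the paper also applies Lemma~\ref{lem:strict-ratio-domination} with $a=u$, $b=v-\epsilon$, and $c=e^{-\epsilon}$, then lets $\epsilon\to0$ and exchanges the roles of $u$ and $v$. Your explicit mention of the tacit hypotheses $h\geq0$ and $\lambda_\omega(\chi)\in\Gamma_k$ is accurate; the paper leaves both unstated.
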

\begin{proof}
For every $\epsilon>0$,
\begin{equation*}
\begin{split}
& \mathbf{1}_{\{u<v-\epsilon\}} (\chi+ dd^c u)^k \wedge \omega^{n-k}= \mathbf{1}_{\{u<v-\epsilon\}}  e^u h \omega^n \le \mathbf{1}_{\{u<v-\epsilon\}}  e^{v-\epsilon} h \omega^n \\
& = \mathbf{1}_{\{u<v-\epsilon\}} e^{-\epsilon} (\chi+ dd^c (v-\epsilon))^k \wedge \omega^{n-k}. 
\end{split}
\end{equation*}
Lemma~\ref{lem:strict-ratio-domination} gives
\begin{equation*}
u \ge v-\epsilon.
\end{equation*}
Letting $\epsilon$ tend to zero, we obtain $u\geq v$. The reverse inequality
follows similarly.
\end{proof}

\begin{proposition}\label{thm:noncollapse-c0}
Let \((X,\omega)\) be a connected compact Hermitian manifold of complex
dimension \(n\), let \(1\leq k\leq n\), and let \(\chi\) be a smooth real
\((1,1)\)-form satisfying
\[
  \lambda[\chi](x)\in \Gammak
  \qquad\text{for every }x\in X.
\]
Let \(g>0\) be smooth, and let \(u\in C^\infty(X)\) be
\(\Gammak\)-admissible and solve
\begin{equation}
  (\chi+\ddc u)^k\wedge\omega^{n-k}
  =e^u g\,\omega^n.
  \label{eq:monotone-k-hessian}
\end{equation}
Assume that
\begin{equation}
  p> \frac{n}{k},
  \qquad
  \|g\|_{L^p(X,\omega^n)}\leq B,
  \qquad
  \int_X g^{1/k}\,\omega^n\geq a>0.
  \label{eq:noncollapse-assumptions}
\end{equation}
Then
\[
  \sup_X u\leq C,
\]
where \(C\) depends only on
\[
  (X,\omega),\quad \chi,\quad n,\quad k,\quad p,\quad B,\quad a,
\]
\end{proposition}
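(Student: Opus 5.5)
We argue by contradiction, using a rescaled comparison with an auxiliary solution. Write $M_0:=\sup_X u$. We want to show that if $M_0$ is large, the equation forces the mass $\int_X g^{1/k}\omega^n$ to be small, contradicting $\int_X g^{1/k}\omega^n\ge a$.

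The starting point is a pointwise inequality. By the Maclaurin/G\aa rding inequality, $(\chi+\ddc u)^k\wedge\omega^{n-k}=e^u g\,\omega^n$ gives, at each point,
\[
\bigl(\tfrac{\sigma_k(\lambda)}{\binom nk}\bigr)^{1/k}\le \tfrac1n\sigma_1(\lambda),\qquad \lambda=\lambda[\chi+\ddc u].
\]
Hence
\[
c_{n,k}\,e^{u/k}g^{1/k}\le \operatorname{tr}_\omega(\chi+\ddc u)=\operatorname{tr}_\omega\chi+\Delta^{\mathrm{Ch}}_\omega u .
\]
Integrating against the Gauduchon density $\rho_\omega$ from \eqref{eq:gauduchon-density} kills the Laplacian term and yields
\[
\int_X e^{u/k}g^{1/k}\rho_\omega\,\omega^n\le C_\chi .
\]
So it suffices to show that $u\ge M_0-C$ on a set carrying a definite portion of the $g^{1/k}$-mass. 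Then $e^{(M_0-C)/k}\,a'\le C_\chi$, which bounds $M_0$.

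To get that, we need a lower bound for $u-M_0$ in $L^\infty$ away from a small set. Here we use the $L^\infty$ estimate for Hessian equations with $L^p$, $p>n/k$, data (the Guo--Phong--Tong/Ko{\l}odziej--Nguyen type estimate, i.e. the Hermitian version via \cite{GuoPhong2024Subsolutions}, or the stability machinery of Section 8 with $F=\sigma_k^{1/k}$, $e^{nG}$ replaced by the right-hand side). Apply it to the normalized function $\tilde u=u-M_0$, which satisfies
\[
(\chi+\ddc\tilde u)^k\wedge\omega^{n-k}=\tilde g\,\omega^n,\qquad \tilde g:=e^{u}g\le e^{M_0}g .
\]
If $M_0\le0$ we are done. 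If $M_0>0$, the density $e^{\tilde u}\cdot e^{M_0}g$ is not bounded in $L^p$ uniformly, so a naive application fails. Instead we use the previous paragraph twice. First, the integrated bound shows $\int e^{u/k}g^{1/k}\le C$. Second, for the actual $L^\infty$ estimate we need $\|e^{u}g\|_{L^p}$ bounded. Splitting $X=\{u\le 0\}\cup\{u>0\}$, the first piece is controlled by $B$. For the second piece we use the \emph{\(\sup_X \tilde u=0\)} normalization together with the Hessian $L^\infty$ bound on the equation with density $\tilde g$. A cleaner route is a comparison argument: let $\varphi$ solve $(\chi+\ddc\varphi)^k\wedge\omega^{n-k}=e^{c}g\,\omega^n$, $\sup\varphi=0$, which gives $\|\varphi\|_\infty\le C(B)$ and $c\ge -C(a)$ by the non-collapsing hypothesis $\int g^{1/k}\ge a$. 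Here the lower bound for $c$ comes from the same Gauduchon integration: $e^{c/k}\int g^{1/k}\rho_\omega\le C_\chi+\ldots$ gives an upper bound, and a mixed-type inequality gives the lower bound. Then compare $u$ with $\varphi+c$ using Proposition~\ref{prop:exponential-uniqueness}-style domination (Lemma~\ref{lem:strict-ratio-domination}). On $\{u>\varphi+c+\epsilon\}$ the right-hand side of $u$ exceeds $e^{\epsilon}$ times that of $\varphi+c$, so $u\le\varphi+c+\epsilon$. This yields $\sup u\le c+\epsilon$, and we need an upper bound on $c$.

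The upper bound on $c$ is again the Gauduchon trace integration applied to $\varphi$:
\[
c_{n,k}e^{c/k}\int g^{1/k}\rho_\omega\omega^n\le C_\chi,
\]
so $e^{c/k}\le C_\chi/(c_{n,k}\,a\min\rho_\omega)$. This is exactly where the hypothesis $\int g^{1/k}\ge a$ enters. Solvability of the auxiliary equation with a constant $c$ on a Hermitian manifold is the main obstacle. I would obtain it from the Sz\'ekelyhidi/Zhang-type theory for smooth $g>0$, via a continuity method in the constant, and use the Hermitian $L^\infty$ estimate for $p>n/k$ only to ensure the constant is well defined. If that existence is unavailable, I would fall back on the direct route: apply the comparison with $\varphi$ solving $(\chi+\ddc\varphi)^k\wedge\omega^{n-k}=e^{c}g\,\omega^n$ via \cite[Theorem~3.1]{ChengXu2025}-type penalized solvers, where only the Gauduchon bound on $c$ is needed.
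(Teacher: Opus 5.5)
Your opening step (Maclaurin, then integration against the Gauduchon density) is exactly how the paper begins. However, the comparison step in the route you finally adopt fails as written. With $\sup_X\varphi=0$ you have $\varphi\le 0$, so on $\{u>\varphi+c+\epsilon\}$ you only know $e^{u}g>e^{\varphi}e^{c+\epsilon}g$. Since $e^{\varphi}\le 1$, this does not give $e^{u}g\ge e^{\epsilon}e^{c}g$. In other words, it does not show that the Hessian measure of $u$ dominates $e^{\epsilon}$ times that of $\varphi+c$. So Lemma~\ref{lem:strict-ratio-domination} cannot be invoked, and the conclusion $\sup_X u\le c+\epsilon$ (which would not involve $\varphi$ at all) does not follow.

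The repair is to compare $u$ with $\varphi+c+\osc_X\varphi+\epsilon\ge c+\epsilon$ instead. Alternatively, since $u$ and $\varphi$ are smooth, use the maximum principle at a maximum point $x_0$ of $u-\varphi$: there $\chi+\ddc u\le\chi+\ddc\varphi$, so $e^{u(x_0)}\le e^{c}$, whence $\sup_X u\le u(x_0)-\varphi(x_0)\le c+\osc_X\varphi$. Thus the bound on $\osc_X\varphi$ is the heart of the argument, not a side remark. It must come from the Hermitian $L^\infty$ estimate for $p>n/k$ applied to the density $e^{c}g$. That estimate needs the \emph{upper} bound $e^{c/k}\le C_\chi/(c_{n,k}\,a\min_X\rho_\omega)$, which your Gauduchon integration does provide; consequently $\|\varphi\|_\infty$ depends on $a$ as well as on $B$. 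The claimed lower bound $c\ge -C(a)$ via an unspecified `mixed-type inequality' is unsupported and not needed. With these corrections, and with existence of $(\varphi,c)$ taken from the Sz\'ekelyhidi--Zhang theory you mention, your argument does prove the proposition.

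It is, however, much heavier than the paper's proof, which needs no auxiliary equation, no existence theorem, and no $L^\infty$ estimate. When you abandoned the direct route, the missing idea was that $L^\infty$ control of $u-\sup_X u$ on a large set is unnecessary; $L^q$ control suffices, and it comes for free. Since $\Gammak\subset\Gamma_1$, the function $v=u-\sup_X u$ satisfies $\Delta^{\Ch}_\omega v\ge-\tr_\omega\chi$ with $\sup_X v=0$. The Green kernel of the Chern Laplacian then gives $\|v\|_{L^q}\le C_q$ for every $q<n/(n-1)$, independently of the density.

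The paper next applies Jensen's inequality to the probability measure $\rho g^{1/k}\omega^n/A$, where $A=\int_X\rho g^{1/k}\omega^n\ge a\min_X\rho$. This turns the Gauduchon bound $e^{M/k}\int_X\rho e^{v/k}g^{1/k}\omega^n\le T_\chi$ into $M\le k\log(T_\chi/A)+A^{-1}\int_X(-v)\rho g^{1/k}\omega^n$. The proof closes with H\"older, pairing $v\in L^{q}$ with $g^{1/k}\in L^{kp}$; the dual exponent $q=kp/(kp-1)$ lies below $n/(n-1)$ exactly because $kp>n$. This also makes transparent where each hypothesis enters: noncollapse only through $A$, and $p>n/k$ only through this exponent matching.
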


\begin{proof}
Define
\[
  M:=\sup_Xu,
  \qquad
  v:=u-M.
\]
Then \(\sup_Xv=0\), \(v\leq0\), and
\[
  \chi+\ddc v=\chi+\ddc u.
\]
Since \(\Gammak\subset\Gamma_1\), admissibility implies
\begin{equation}
  \Delta_\omega^{\Ch}v
  =\tr_\omega(\ddc v)
  \geq-\tr_\omega\chi.
  \label{eq:chern-laplacian-lower}
\end{equation}
The Green-kernel estimate for the fixed Chern Laplacian, together with
\(\sup_Xv=0\), gives
\begin{equation}
  \|v\|_{L^q(X,\omega^n)}\leq C_q
  \qquad
  \text{for every }1\leq q<\frac{n}{n-1}.
  \label{eq:green-Lq}
\end{equation}
When \(n=1\), the upper endpoint in \eqref{eq:green-Lq} is interpreted as
\(+\infty\).  Indeed, if
\[
  \mu:=\Delta_\omega^{\Ch}v+\tr_\omega\chi\geq0,
\]
then the Gauduchon adjoint density fixes the total mass of \(\mu\), and the
Green kernel has the usual real \(2n\)-dimensional singularity
\(d(x,y)^{2-2n}\), logarithmic when \(n=1\).  This proves
\eqref{eq:green-Lq}.

Let \(\rho>0\) be the fixed Gauduchon density, normalized once and for all,
such that
\begin{equation}
  \ddc(\rho\,\omega^{n-1})=0.
  \label{eq:noncollapse-gauduchon-density}
\end{equation}
For a \(\Gammak\)-admissible real \((1,1)\)-form \(\alpha\), the normalized
Maclaurin inequality gives
\begin{equation}
  \frac{\alpha\wedge\omega^{n-1}}{\omega^n}
  \geq
  \left(
    \frac{\alpha^k\wedge\omega^{n-k}}{\omega^n}
  \right)^{1/k}.
  \label{eq:maclaurin}
\end{equation}
Applying \eqref{eq:maclaurin} to
\(\alpha=\chi+\ddc v\), and using
\eqref{eq:monotone-k-hessian}, yields
\begin{equation}
  (\chi+\ddc v)\wedge\omega^{n-1}
  \geq e^{(M+v)/k}g^{1/k}\omega^n.
  \label{eq:maclaurin-applied}
\end{equation}
Multiplying by \(\rho\) and integrating, we obtain
\begin{align}
  T_\chi
  &:=
  \int_X\rho\,\chi\wedge\omega^{n-1}
  \notag\\
  &=
  \int_X\rho(\chi+\ddc v)\wedge\omega^{n-1}
  \notag\\
  &\geq
  e^{M/k}\int_X\rho e^{v/k}g^{1/k}\omega^n.
  \label{eq:gauduchon-maclaurin}
\end{align}
The equality in \eqref{eq:gauduchon-maclaurin} follows from
\eqref{eq:noncollapse-gauduchon-density} and Stokes' theorem:
\[
  \int_X\rho\,\ddc v\wedge\omega^{n-1}
  =
  \int_Xv\,\ddc(\rho\,\omega^{n-1})
  =0.
\]
Moreover, \(T_\chi>0\), since
\(\lambda_\omega(\chi)\in\Gammak\subset\Gamma_1\).

Set
\[
  A:=\int_X\rho g^{1/k}\omega^n.
\]
Since \(\rho>0\) is fixed, the noncollapse condition gives
\begin{equation}
  A\geq(\min_X\rho)a=:a_\rho>0.
  \label{eq:A-lower}
\end{equation}
Consider the probability measure
\[
  d\mu
  :=\frac{\rho g^{1/k}\omega^n}{A}.
\]
By Jensen's inequality,
\begin{align}
  \int_X\rho e^{v/k}g^{1/k}\omega^n
  &=
  A\int_Xe^{v/k}\,d\mu
  \notag\\
  &\geq
  A\exp\left(
    \frac{1}{kA}
    \int_Xv\rho g^{1/k}\omega^n
  \right).
  \label{eq:jensen}
\end{align}
Combining \eqref{eq:gauduchon-maclaurin} and \eqref{eq:jensen}, and taking
logarithms, gives
\begin{equation}
  M
  \leq
  k\log\frac{T_\chi}{A}
  +\frac{1}{A}
   \int_X(-v)\rho g^{1/k}\omega^n.
  \label{eq:M-upper-pre}
\end{equation}

Let
\[
  r:=kp,
  \qquad
  q:=\frac{r}{r-1}=\frac{kp}{kp-1}.
\]
Since \(kp>n\), one has
\[
  q<\frac{n}{n-1},
\]
so \eqref{eq:green-Lq} applies. H\"older's inequality gives
\begin{align}
  \int_X(-v)\rho g^{1/k}\omega^n
  &\leq
  \|v\|_{L^q}
  \|\rho g^{1/k}\|_{L^{kp}}
  \notag\\
  &\leq
  C_q\|g\|_{L^p}^{1/k}
  \leq C_qB^{1/k}.
  \label{eq:weighted-holder}
\end{align}
It follows from \eqref{eq:A-lower}, \eqref{eq:M-upper-pre}, and
\eqref{eq:weighted-holder} that
\begin{equation}
  M\leq
  M_+
  :=
  k\log\frac{T_\chi}{a_\rho}
  +\frac{C_qB^{1/k}}{a_\rho}.
  \label{eq:M-upper}
\end{equation}
\end{proof}

\begin{lemma}
\label{lem:uniform-hessian-continuity}
Let $z_j,z\in SH_{\chi,k}(M,\omega)\cap L^\infty(M)$ and suppose that
$z_j\to z$ uniformly on $M$. Then
\[
 H_{\chi,k}(z_j)\rightharpoonup H_{\chi,k}(z)
\]
weakly as Radon measures.
\end{lemma}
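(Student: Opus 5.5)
The claim is local, and weak convergence of Radon measures on the compact $M$ can be tested on smooth test forms with small support. So I will fix a finite atlas of coordinate balls $\Omega$, a subordinate partition of unity, and on each ball a strictly psh $\rho$ with $dd^c\rho\geq\chi$. In $\Omega$ I use the local expansion from the definition of $H_{\chi,k}$ recalled from \cite[Section~9]{KNmeasure}:
\[
 H_{\chi,k}(z_j)=\sum_{i=0}^{k}\binom{k}{i}\bigl[dd^c(z_j+\rho)\bigr]^i\wedge\tau^{k-i}\wedge\omega^{n-k},\qquad \tau=\chi-dd^c\rho .
\]
Here $z_j+\rho$ and $z+\rho$ are bounded $k$-$\omega$-subharmonic on $\Omega$, and $z_j+\rho\to z+\rho$ uniformly. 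It therefore suffices to prove, for each fixed $0\le i\le k$ and each smooth form $\tau$, that
\[
 \bigl[dd^c(z_j+\rho)\bigr]^i\wedge\tau^{k-i}\wedge\omega^{n-k}\rightharpoonup \bigl[dd^c(z+\rho)\bigr]^i\wedge\tau^{k-i}\wedge\omega^{n-k}
\]
as currents of order zero on $\Omega$. Since $\tau$ is not positive, I would first write $\tau=\tau_1-\tau_2$ with $\tau_1,\tau_2$ smooth and positive (even strictly in $\Gamma_k(\omega)$, say multiples of $\omega$ plus a large multiple of $\omega$), and expand by multilinearity. This reduces everything to mixed products $dd^cw_1\wedge\cdots\wedge dd^cw_i\wedge\gamma_{i+1}\wedge\cdots\wedge\gamma_k\wedge\omega^{n-k}$ with smooth positive $\gamma$'s.

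The key input is the continuity of these mixed Hessian currents under uniform convergence of bounded $k$-$\omega$-sh functions. Kołodziej--Nguyen construct the operators $\mathcal L_i$ in \cite{KNmeasure} and prove exactly this kind of convergence, under monotone or uniform convergence, by a Chern--Levine--Nirenberg plus integration-by-parts argument. I would invoke that result. If I had to supply it, I would induct on $i$. Write $T_j=dd^cw_j^{(1)}\wedge\cdots\wedge dd^cw_j^{(i-1)}\wedge\gamma\wedge\omega^{n-k}$. Then for a test function $\varphi$,
\[
 \int\varphi\, dd^c w_j\wedge T_j=\int w_j\, dd^c(\varphi\,\cdot)\wedge T_j+\text{torsion terms},
\]
where the torsion terms involve $d\omega$, $\partial\omega$ and $dd^c\omega$ paired with $w_j$ times lower-order currents. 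Uniform convergence of $w_j$, together with local mass bounds on $T_j$ from CLN (uniform in $j$ because the sup norms are uniform), and the inductive weak convergence of the lower products, give convergence of each term.

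The main obstacle is the non-Kähler torsion. Since $d\omega\neq0$, integrating by parts produces terms like $dw_j\wedge d^c\omega$ that involve gradients of $w_j$, not just $w_j$. In \cite{KNmeasure} these are handled with a Cauchy--Schwarz inequality for the positive currents together with CLN-type bounds on $\int dw\wedge d^cw\wedge T$. Uniform convergence then gives $\int d(w_j-w)\wedge d^c(w_j-w)\wedge T_j\to0$. I would rely on their estimates for this step rather than redo it. Finally, summing over $i$ and over the partition of unity gives $H_{\chi,k}(z_j)\rightharpoonup H_{\chi,k}(z)$ on $M$. Nonnegativity and uniformly bounded total mass let me pass from test forms to continuous test functions.
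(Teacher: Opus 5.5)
Your proposal is correct and follows the paper's route. The paper's entire proof is to cite the uniform-convergence case of the weak-continuity argument in \cite[proof of Lemma~5.1]{KNmeasure}, applied to the operator $H_{\chi,k}$ defined locally as in \cite[Section~9]{KNmeasure}, and that is exactly your reduction.

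One caveat concerns only your optional self-contained induction. Since the limit $w$ is merely bounded and not continuous, weak convergence $T_j\rightharpoonup T$ does not by itself give $\int w\,dd^c\varphi\wedge T_j\to\int w\,dd^c\varphi\wedge T$, and the same problem affects the $dw$ torsion terms. To close that route you would need to telescope, writing $H_{\chi,k}(z_j)-H_{\chi,k}(z)=dd^c(z_j-z)\wedge S_j$ with mixed products $S_j$, or else use quasi-continuity. Because you defer this step to \cite{KNmeasure}, your main argument is unaffected.
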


\begin{proof}
This is the uniform-convergence case of the weak-continuity argument in
\cite[proof of Lemma~5.1]{KNmeasure}.
\end{proof}

We are now ready to prove Corollary~\ref{cor:main}.
\begin{proof}[Proof of Corollary~\ref{cor:main}]
Fix
\[
 0<\mu<\alpha_*:=
  1 + \frac{2(kp_0-n)}{kp_0 (n+1)-n}- \sqrt{1+ \frac{4(k p_0-n)^2}{(kp_0 (n+1)-n)^2}},
\]
and choose $\alpha$ with $\mu<\alpha<\alpha_*$. Since
\eqref{eq:main-cor} is invariant under adding constants to $u$, we normalize
$\sup_M u=0$. By \cite{GuoPhong2024Subsolutions}, there is a constant
$C_1$, depending only on the fixed data and $\|g\|_{L^{p_0}}$, such that
\[
 \|u\|_{L^\infty(M)}\leq C_1.
\]
Set $h=e^{-u}g$. Then $h\in L^{p_0}(M)$ and
$\|h\|_{L^{p_0}}\leq e^{C_1}\|g\|_{L^{p_0}}$.
Moreover, \eqref{eq:main-cor} says that $u$ is a solution of the twisted
equation
\begin{equation}\label{eq:corollary-twisted-equation}
 H_{\chi,k}(w)=e^{w-u}g\,\omega^n,
 \qquad w\in SH_{\chi,k}(M,\omega)\cap L^\infty(M).
\end{equation}

The function $h$ is not identically zero. Indeed, otherwise
$H_{\chi,k}(u)=0$, and Lemma~\ref{lem:strict-ratio-domination}, applied
with $a=u$, $c=0$, and $b$ an arbitrary constant, would give
$u\geq b$ for every $b\in\mathbb R$. We may therefore choose positive
smooth functions $h_l$ converging to $h$ in $L^{p_0}(M)$ and constants
$B,a>0$ such that
\begin{equation}\label{eq:uniform-hl}
 \|h_l\|_{L^{p_0}(M)}\leq B,
 \qquad
 \int_M h_l^{1/k}\,\omega^n\geq a
 \quad\text{for every }l.
\end{equation}
By \cite[Theorem~3.1]{ChengXu2025}, there is a smooth admissible solution
$u_l$ of
\begin{equation}\label{eq:corollary-approximating-equation}
 H_{\chi,k}(u_l)=e^{u_l}h_l\,\omega^n.
\end{equation}
Proposition~\ref{thm:noncollapse-c0} and \eqref{eq:uniform-hl} give a
constant $C_2$, independent of $l$, such that
\begin{equation}\label{eq:corollary-upper-bound}
 \sup_M u_l\leq C_2.
\end{equation}
In particular,
\begin{equation}\label{eq:corollary-rhs-lp}
 \|e^{u_l}h_l\|_{L^{p_0}(M)}\leq e^{C_2}B.
\end{equation}

Put $r_0=p_0k/n>1$. For the degree-one operator
\[
 f_k(\lambda)=
 \left(\binom nk^{-1}\sigma_k(\lambda)\right)^{1/k},
\]
the scalar quantity $e^{nG_l}$ in Theorem~\ref{thm:main} is
$(e^{u_l}h_l)^{n/k}$. Hence \eqref{eq:corollary-rhs-lp} gives a uniform
$L^{r_0}$ bound for $e^{nG_l}$. Applying Theorem~\ref{thm:main} to
$u_l-\sup_M u_l-1$ therefore yields a constant $C_4$, independent of $l$,
such that
\begin{equation}\label{eq:corollary-normalized-holder}
 \|u_l-\sup_M u_l\|_{C^\alpha(M)}\leq C_4.
\end{equation}

We next claim that $\sup_M u_l$ is bounded from below uniformly in $l$.
Otherwise, after passing to a subsequence, we would have
$m_l:=\sup_M u_l\to-\infty$. Set $v_l=u_l-m_l$. By
\eqref{eq:corollary-normalized-holder} and compactness, a further
subsequence converges uniformly to a bounded function
$v_\infty\in SH_{\chi,k}(M,\omega)$. From
\eqref{eq:corollary-approximating-equation},
\[
 H_{\chi,k}(v_l)=e^{m_l}e^{v_l}h_l\,\omega^n,
\]
and the right-hand side tends to zero in $L^1(M)$ because $v_l\leq0$,
$m_l\to-\infty$, and the $h_l$ are uniformly bounded in $L^{p_0}$.
Lemma~\ref{lem:uniform-hessian-continuity} gives
\[
 H_{\chi,k}(v_l)\rightharpoonup H_{\chi,k}(v_\infty).
\]
Thus $H_{\chi,k}(v_\infty)=0$. Applying
Lemma~\ref{lem:strict-ratio-domination} with $a=v_\infty$, $c=0$, and
$b$ an arbitrary constant would imply $v_\infty\geq b$ for every
$b\in\mathbb R$, a contradiction. This proves the claim.

Combining the claim, \eqref{eq:corollary-upper-bound}, and
\eqref{eq:corollary-normalized-holder}, we obtain
\begin{equation}\label{eq:corollary-uniform-holder}
 \|u_l\|_{C^\alpha(M)}\leq C_6
\end{equation}
for every $l$. The compact embedding
$C^\alpha(M)\hookrightarrow C^\mu(M)$ now gives, after passing to a
subsequence,
\[
 u_l\longrightarrow u_\infty
 \quad\text{in }C^\mu(M)
\]
for some $u_\infty\in SH_{\chi,k}(M,\omega)\cap C^\alpha(M)$. In addition,
$h_l\to h$ in $L^{p_0}$ and $u_l\to u_\infty$ uniformly, so
\[
 e^{u_l}h_l\longrightarrow e^{u_\infty}h
 \quad\text{in }L^{p_0}(M).
\]
Using Lemma~\ref{lem:uniform-hessian-continuity} in
\eqref{eq:corollary-approximating-equation}, we conclude that
\[
 H_{\chi,k}(u_\infty)
 =e^{u_\infty}h\,\omega^n
 =e^{u_\infty-u}g\,\omega^n.
\]
Thus both $u_\infty$ and $u$ solve
\eqref{eq:corollary-twisted-equation}. Proposition
\ref{prop:exponential-uniqueness} gives $u_\infty=u$. Since the convergence
holds in $C^\mu(M)$, it follows that $u\in C^\mu(M)$, as required.
\end{proof}

\bigskip
\begin{center}
\begin{minipage}{0.88\textwidth}
\small
\textbf{Yulun Xu}\\[0.35em]
University of Toronto, 40 St. George Street, Toronto, ON, Canada\\[0.35em]
\href{mailto:yulun.1.xu@gmail.com}{\texttt{yulun.1.xu@gmail.com}}
\end{minipage}
\end{center}

\end{document}